\theoremstyle{thmstyleone}%
\newtheorem{theorem}{Theorem}
\theoremstyle{thmstyletwo}%
\theoremstyle{thmstylethree}%
\renewcommand{\vec}[1] {\ensuremath{\boldsymbol{#1}}}
\def\R{{\mathbb R}}
\def\U{{\mathcal U}}
\def\Z{{\mathcal Z}}
\def\t{{\vec{\theta}}}
\def\u{{\vec{u}}}
\def\c{{\vec{c}}}
\def\z{{\vec{z}}}
\def\x{{\vec{x}}}
\def\e{{\vec{e}}}
\def\v{{\vec{v}}}
\def\w{{\vec{w}}}
\def\y{{\vec{y}}}
\def\b{{\vec{b}}}
\def\J{{\vec{J}}}
\def\c{{\vec{c}}}
\def\d{{\vec{\delta}}}
\def\B{{\vec{B}}}
\def\E{{\vec{E}}}
\def\M{{\vec{M}}}
\def\K{{\vec{K}}}
\def\G{{\vec{G}}}
\def\S{{\vec{S}}}
\def\N{{\vec{N}}}
\def\L{{\vec{L}}}
\def\C{{\vec{C}}}
\def\H{{\vec{H}}}
\def\Q{{\vec{Q}}}
\def\V{{\vec{V}}}
\def\I{{\vec{I}}}
\def\W{{\vec{W}}}
\def\T{{\vec{T}}}
\def\Y{{\vec{Y}}}
\def\F{{\vec{F}}}
\DeclareMathAlphabet\mathbfcal{OMS}{cmsy}{b}{n}
\newcolumntype{P}[1]{>{\centering\arraybackslash}p{#1}}
\newcolumntype{M}[1]{>{\centering\arraybackslash}m{#1}}
\begin{document}

\title[HDSA for model discrepancy]{Hyper-differential sensitivity analysis with respect to model discrepancy: Mathematics and computation}

\author*[1]{\fnm{Joseph} \sur{Hart}}\email{joshart@sandia.gov}

\author[1]{\fnm{Bart} \sur{van Bloemen Waanders}}\email{bartv@sandia.gov}

\affil*[1]{\orgdiv{Scientific Machine Learning}, \orgname{Sandia National Laboratories}, \orgaddress{\street{P.O. Box 5800}, \city{Albuquerque}, \postcode{87123}, \state{NM}, \country{USA}}}

\abstract{
Model discrepancy, defined as the difference between model predictions and reality, is ubiquitous in computational models for physical systems. It is common to derive partial differential equations (PDEs) from first principles physics, but make simplifying assumptions to produce tractable expressions for the governing equations or closure models. These PDEs are then used for analysis and design to achieve desirable performance. For instance, the end goal may be to solve a PDE-constrained optimization (PDECO) problem. This article considers the sensitivity of PDECO problems with respect to model discrepancy. We introduce a general representation of the discrepancy and apply post-optimality sensitivity analysis to derive an expression for the sensitivity of the optimal solution with respect to the discrepancy. An efficient algorithm is presented which combines the PDE discretization, post-optimality sensitivity operator, adjoint-based derivatives, and a randomized generalized singular value decomposition to enable scalable computation. Kronecker product structure in the underlying linear algebra and corresponding infrastructure in PDECO is exploited to yield a general purpose algorithm which is computationally efficient and portable across  a range of applications. Known physics and problem specific characteristics of discrepancy are imposed through user specified weighting matrices. We demonstrate our proposed framework on two nonlinear PDECO problems to highlight its computational efficiency and rich insight. 
}

\keywords{
Hyper-differential sensitivity analysis, post-optimality sensitivity analysis, PDE-constrained optimization, model discrepancy, model form error, model inadequacy
}

\maketitle

\section{Introduction}
Computational models provide a wealth of opportunity to understand fundamental properties of physical systems.  In many cases, these models take the form of partial differential equations (PDEs) which are derived from first principles based on laws of physics. However, as famously said by Box~\cite{box_1979}, ``All models are wrong but some are useful." To determine usefulness, it is critical to understand two sources of error: (i) uncertainty in model parameters, and (ii) error in the form of the model itself. Uncertainty in model parameters has been studied extensively in the field of uncertainty quantification \cite{uq_handbook}. Model form error, also referred to as model inadequacy or structural error, has also received attention, although its analysis is less mature due to the myriad of challenges and physics specific considerations. Error in the structure of a model (i.e. a simplification or omission for an operator in a PDE) produces a difference between the model prediction and reality, which we herein refer to as the model discrepancy.

The study of model discrepancy has been prominent in the model calibration literature since the seminal work of Kennedy and O'Hagan \cite{ohagan2001}. Their original framework considered calibration of model parameters and the hyper-parameters defining a Gaussian process discrepancy function to accommodate modeling error in calibration and prediction. Their framework has been extended by various authors \cite{Ling_2014,Maupin,Arendt_2012,Higdon_2008}. Several authors \cite{bayes_approx_error_petra,kopke_2018,Kaipio_2008} have considered the incorporation of model error in Bayesian inverse problems by modeling discrepancy in the likelihood function to mitigate bias on the solution of the inverse problem. Recent trends have turned the focus toward developing representations of model form error which respect laws of physics such as conservation properties. In \cite{Sargsyan_2019,sargsyan_2018,Sargsyan_2015}, an embedded approach is considered that places the representation of error inside the PDE in an attempt to satisfy physical properties. Other work have posed model form error as an unknown operator which is constrained by laws of physics in its representation and then inferred from data \cite{morrison_2018,portone}.

A common characteristic shared across these approaches is the trade-off between computational cost, model intrusiveness, and physics specific developments. Generally speaking, highly intrusive methods require considerable effort on a problem-to-problem basis and provide efficient analysis at the cost of development time. On the other hand, non-intrusive methods facilitate more rapid deployment but at the expense of higher computational cost and/or data requirements. This article focuses on developing a framework that is portable, nonintrusive, and computationally efficient.

We introduce a new approach to analyze the effect of model discrepancy in PDE-constrained optimization (PDECO) problems. Building on post-optimality sensitivity analysis \cite{post_opt_tutorial,shapiro_SIAM_review,Griesse_part_1,Griesse_part_2,griesse2} and its recent advances with hyper-differential sensitivity analysis (HDSA) \cite{HDSA,sunseri_hdsa,saibaba_gsvd,hart_2021_bayes}, the sensitivity of optimal solutions with respect to model discrepancy is considered. Because of the complexity of PDECO, we focus on computationally scalable methods and seek to leverage tools such as parallel and matrix-free linear algebra, adjoint-based derivative computations (which provides efficient derivative computation in high dimensions), and low rank approximations.

Our contributions include: 
\begin{enumerate}
\item[$\bullet$] an infinite dimensional formulation of post-optimality sensitivity analysis with respect to model discrepancy,
\item[$\bullet$] expressions for the discretized model discrepancy which are consistent with the infinite dimensional formulation,
\item[$\bullet$] exploitation of the Kronecker product structure to produce computationally scalable algorithms that utilize PDECO,
\item[$\bullet$] an algorithmic framework for the computation of the model discrepancy sensitivities and the associated computational complexity analysis,
\item[$\bullet$] demonstration of the proposed approach on nonlinear PDECO problems.
\end{enumerate}
These contributions are interrelated arising from a holistic perspective that spans the problem formulation, computational implementation, interpretation, and use of the sensitivities. This article focuses on the mathematical formulation, discretization, computation, and effect of model discrepancy on the optimization solution, while a companion article~\cite{model_discrepancy_2} demonstrates how the sensitivities may be coupled with high-fidelity data to improve the optimal solution and characterize corresponding uncertainty. 

The article is organized as follows. Section~\ref{sec:opt_and_mfes} gives the infinite dimensional formulation of a general PDECO problem and our proposed model discrepancy sensitivities. The problem discretization is used to derive an expression for the model discrepancy in Section~\ref{sec:discretization}. Section~\ref{sec:computation} outlines the algorithmic framework and Kronecker product structure in the model discrepancy representation that ensures scalability. Section~\ref{sec:numerical_results} demonstrates the proposed approach on three examples: an illustrative example using Poisson's equation, a distributed source control problem constrained by the convection-diffusion-reaction equation, and a boundary thermal flux control problem constrained by Boussinesq flow equations. Section~\ref{sec:conclusion} concludes with a discussion of the proposed approach  and impact on a companion article that focuses on the use of these sensitivities to leverage high-fidelity data in support of decision-making.

\section{Optimization and model discrepancy sensitivity} \label{sec:opt_and_mfes}

Consider the PDECO problem
\begin{align}
\label{eqn:true_opt_prob}
& \min_{z \in \Z} J(S(z),z) 
\end{align}
where $z$ denotes optimization variables in the (possibly infinite dimensional) Hilbert space $\Z$, $S:\Z \to \U$ denotes the solution operator for a PDE $c(u,z)=0$ with state variable $u$ in a (infinite dimensional) Hilbert space $\U$, and $J:\U \times \Z \to \R$ is the objective function. This formulation is applicable for design, control, and inverse problems. There is a wealth of literature on both algorithms and applications of PDECO. We direct the reader to \cite{Vogel_99, Archer_01,Haber_01,Vogel_02,Biegler_03,Biros_05,Laird_05,Hintermuller_05,Hazra_06,Biegler_07,Borzi_07,Hinze_09,Biegler_11,frontier_in_pdeco} for a comprehensive review. The ingredients most relevant to this work includes adjoint-based derivative computations, Krylov and Newton iterative solves, and parallel numerical linear algebra.

 In general, the ``true" PDE governing the system may be unknown or computationally intractable, so in practice we typically solve
\begin{align}
\label{eqn:approx_opt_prob}
& \min_{z \in \Z} J(\tilde{S}(z),z) 
\end{align}
where $\tilde{S}:\Z \to \U$ is the solution operator for a simpler PDE $\tilde{c}(u,z)=0$. In many applications, the high-fidelity PDE $c$ is only known in theory but cannot be expressed or solved, whereas $\tilde{c}$ is the PDE derived from first principles physics with simplifying assumptions that make it tractable. In practice, the lower-fidelity optimization problem~\eqref{eqn:approx_opt_prob} is solved to approximate the solution of the high-fidelity optimization problem~\eqref{eqn:true_opt_prob}. 

Our goal in this article is to study how discrepancies arising from changes in the PDE constraint influence the solution of~\eqref{eqn:approx_opt_prob}. To this end, consider the parameterized optimization problem 
\begin{align}
\label{eqn:approx_opt_prob_pert_rs}
 \min_{z \in \Z} \hspace{1 mm} \hat{J}(z,\delta):=J(\tilde{S}(z)+\delta(z),z) 
\end{align}
where $\delta$ represent the model discrepancy. Then~\eqref{eqn:approx_opt_prob_pert_rs} coincides with the lower-fidelity optimization problem~\eqref{eqn:approx_opt_prob} when $\delta=0$ and the high-fidelity optimization problem~\eqref{eqn:true_opt_prob} when $\delta = S-\tilde{S}$. 

We seek to compute the sensitivity of~\eqref{eqn:approx_opt_prob_pert_rs} with respect to $\delta$. This will require defining a Hilbert space for possible discrepancies. The definition of this space serves as a mechanism to impose known physics or mathematical properties on $\delta$. To this end, let $\mathcal{Y} \subseteq \mathcal{U}$ be a subspace of the state space equipped with an inner product $(\cdot,\cdot)_\mathcal{Y}:\mathcal Y \times \mathcal Y \to \R$ (that may be different from $\mathcal U$'s inner product) to impose known physical characteristics on the discrepancy. Let $\mu$ be a measure on $\Z$ to marginalize the discrepancy's dependence on $z$ and $C^2(\Z,\U)$ denotes the set of twice continuously differentiable operators from $\Z$ to $\mathcal Y$. Then 
\begin{eqnarray*}
\Delta = \{ \delta \in C^2(\Z,\mathcal Y) \vert (\delta(z),\delta(z))_\mathcal{Y}:\mathcal Z \to \R \text{ is integrable with respect to } \mu \} ,
\end{eqnarray*}
forms a Hilbert space of possible model discrepancies with the inner product $(\delta_1,\delta_2)_{\Delta} = \int (\delta_1(z),\delta_2(z))_\mathcal{Y} d\mu(z)$.

To satisfy technical assumptions in what follows, assume that $\hat{J}$ is twice continuously differentiable with respect to ($z,\delta)$. 
Let $\delta=\delta_0=0$ and $\overline{z}$ be a local minimum for~\eqref{eqn:approx_opt_prob_pert_rs} which satisfies the first and second order optimality conditions
\begin{eqnarray*}
\hat{J}_z'(\overline{z},\delta_0)=0 \qquad \text{and} \qquad \hat{J}_{z,z}''(\overline{z},\delta_0) \hspace{4 mm} \text{is positive definite,}
\end{eqnarray*}
where $\hat{J}_z'$ and $\hat{J}_{z,z}''$ denote the first and second Fr\'echet derivative of $\hat{J}$ with respect to $z$, respectively. Then the Implicit Function Theorem implies the existence of an operator $\mathcal F: \mathcal N(\delta_0) \to \mathcal N(\overline{z})$, defined on neighborhoods of $\delta_0 \in \Delta$ and $\overline{z} \in \Z$, such that
\begin{eqnarray*}
\hat{J}_z'(\mathcal F(\delta),\delta)=0 \qquad \forall \delta \in \mathcal N(\delta_0) .
\end{eqnarray*}
Further, the Fr\'echet derivative of $\mathcal F$ with respect to $\delta$, evaluated at $\delta_0$, is 
\begin{eqnarray}
\label{eqn:sen_op}
\mathcal F_\delta'(\delta_0) = - \mathcal H^{-1} \mathcal B
\end{eqnarray}
where $\mathcal H = \hat{J}_{z,z}''(\overline{z},\delta_0)$ and $\mathcal B = \hat{J}_{z,\delta}''(\overline{z},\delta_0)$. We interpret $\mathcal F_\delta'(\delta_0)$ as the sensitivity of the solution of the optimization problem~\eqref{eqn:approx_opt_prob_pert_rs} with respect to perturbations of the model discrepancy. 

The post-optimality sensitivity operator has been shown to provide rich insights when analyzing sensitivity with respect to physical parameters appearing in the PDE~\cite{HDSA,sunseri_hdsa}. This new formulation of post-optimality sensitivites with respect to model discrepancy is a valuable tool to support model development and decision-making through a quantitive understanding of how discrepancies influence the solution of optimization problems. 

To enable analysis of $\mathcal F_\delta'$, this article introduces a scalable and efficient computational framework by:
\begin{enumerate}
\item utilizing the PDE discretization and properties of $\mathcal F_\delta'$ to define a set of discretized model discrepancy operators,
\item defining an inner product on the discretized model discrepancy operators to enable user specification of known physics and/or mathematical properties, 
\item deriving computationally efficient expressions for the discretized $\mathcal F_\delta'$ and\\ model discrepancy inner product, and
\item developing a randomized generalized Singular Value Decomposition algorithm which exploits structure in the discretization to ensure computational efficiency.
\end{enumerate}
These four items are highly interrelated. For instance, the discretization of $\mathcal F_\delta'$ dictates the linear algebra structures needed for computational efficiency. As the following sections progress through the above points the reader should note how modeling choices at earlier stages are motivated by the resulting computational expressions they produce.

\section{Discretization} \label{sec:discretization}
We begin with the PDE discretization and use the resulting linear algebra constructs to direct our developments for the discretized model discrepancy and its inner product.
\subsection{PDE discretization}
Let $\{\phi_i\}_{i=1}^m$ be a basis for a finite dimensional subspace $\U_h \subset \U$ and $\{\psi_j\}_{j=1}^n$ be a basis for a finite dimensional subspace of $\Z_h \subset \Z$. These may, for instance, be finite element basis functions. Let $\u \in \R^m$ and $\z \in \R^n$ denote coordinates, $T_u:\R^m \to \U_h$ and $T_z:\R^n \to \Z_h$ denote coordinate transformations given by
$$T_u(\u) = \sum_{i=1}^m u_i \phi_i \qquad \text{ and } \qquad T_z(\z) = \sum_{k=1}^n z_k \psi_k,$$
and $T_u^{-1}: \U_h \to \R^m$ and $T_z^{-1}:\Z_h \to \R^n$ denote their inverses.

Inner products in $\U_h$ and $\Z_h$ can be computed via multiplication with the mass matrices $\M_u \in \R^{m \times m}$ and $\M_z \in \R^{n \times n}$, defined by 
\begin{eqnarray*}
(\M_u)_{i,j} = \left( \phi_i,\phi_j \right)_\U \qquad \text{and} \qquad (\M_z)_{i,j} = \left( \psi_i,\psi_j \right)_\Z .
\end{eqnarray*} 

The objective function $J$ is discretized by $\J:\R^m \times \R^n \to \R$ where 
$$\J(\u,\z) = J(T_u(\u),T_z(\z)),$$
 and the constraint $\tilde{c}$ is discretized by the system of equations $\tilde{\c}:\R^m \times \R^n \to \R^m$ where $\tilde{c}(T_u(\u),T_z(\z))$ is enforced at $m$ elements in the dual space of $\U$ (for instance, by projection or collocation). Similarly, we denote the discretized PDE solution operator as $\tilde{\S}:\R^n \to \R^m$.

\subsection{Model discrepancy discretization}
To achieve an expression for the model discrepancy we restrict it to the finite dimensional spaces $\mathcal U_h$ and $\mathcal Z_h$. A general form for the discrepancy in the discretized spaces is
\begin{align*}
&z \mapsto \sum\limits_{i=1}^m f_i(z) \phi_i
\end{align*}
where $f_i:\Z_h \to \R$, $i=1,2,\dots,m$, denotes functionals. We realize a simplification by observing that the sensitivity operator~\eqref{eqn:sen_op} only depends on $(z,z)$ and $(z,\delta)$ derivatives of the objective $\hat{J}$ evaluated at $\delta_0=0$. Without loss of generality we assume that $f_i(z)$, $i=1,2,\dots,m$, are affine functions of $z$. Using the Riesz representation for bounded linear functionals we have a general form
\begin{eqnarray*}
 z \mapsto \sum\limits_{i=1}^m (\theta_{i,0}+(z,a_i(\t))_{\mathcal Z}) \phi_i
\end{eqnarray*}
where $\theta_{i,0} \in \R$, $i=1,2,\dots,m$ and $a_i(\t) \in Z_h$, $i=1,2,\dots,m$, are the Riesz representation elements, which we parameterize with $\t$, a vector of coefficients. A general expression for elements in $Z_h$ is given by writing them as a linear combination of basis functions as
\begin{eqnarray*}
a_i(\t) =  \sum\limits_{j=1}^n  \theta_{i,j} \psi_j,
\end{eqnarray*}
where $\theta_{i,j} \in \R$. This yields the discretized model discrepancy, defined on the space of coordinates (rather than in the function spaces), $\d:\R^m \times \R^p \to \R^m$ given by
\begin{eqnarray}
\label{eqn:delta_1}
\d(\z,\t)=\sum\limits_{i=1}^m \left( \theta_{i,0} + \sum\limits_{j=1}^n \theta_{i,j} (T_z(\z),\psi_j)_{\mathcal Z} \right)T_u^{-1}(\phi_i)
\end{eqnarray}
where the vector of coefficients is defined as $\t=(\t_0^T,\t_1^T,\dots,\t_m^T)^T \in \R^p$, $p=m(n+1)$, where $\t_0=(\theta_{0,0},\theta_{1,0},\dots,\theta_{m,0})^T \in \R^m$ corresponds to the $m$ intercept terms and $\t_i = (\theta_{i,1},\theta_{i,2},\dots,\theta_{i,n})^T \in \R^n$, $i=1,2,\dots,m$, corresponds to the $m$ linear functionals.

Observe that when $\mathcal Z$ is a function space, and hence $n=dim(\mathcal Z_h)$ is large, $p$ will be extremely large (scaling like the square of the number of nodes in a mesh) and pose computational challenges. However, recalling that $T_u^{-1}(\phi_i)=\e_i^m$ and $(T_z(\z),\psi_j)_{\mathcal Z} =\z^T \M_z \e_j^n$, where $\e_i^m$ and $\e_j^n$ denotes canonical basis vectors in $\R^m$ and $\R^n$, respectively, we rewrite~\eqref{eqn:delta_1} as a matrix-vector product with $\t$. Observe that $\d(\z,\t)$ is a bi-linear function of $(\z,\t)$ which admits a convenient Kronecker product representation 
\begin{eqnarray}
\label{eqn:delta_kron}
\d(\z,\t) =
\left( \begin{array}{cc}
\I_m & \I_m \otimes \z^T \M_z
\end{array} \right) \t
\end{eqnarray}
where $\I_m \in \R^{m \times m}$ is the identity matrix. The Kronecker product structure of~\eqref{eqn:delta_kron} will prove critical in the subsequent analysis to achieve computational scalability. 

\subsection{Optimization problem discretization}
The discretization of the parameterized optimization problem~\eqref{eqn:approx_opt_prob_pert_rs} is given by
\begin{align}
\label{eqn:dis_approx_opt_prob_pert_rs}
 \min_{\z \in \R^n} \hspace{1 mm} \vec{\hat{J}}(\z,\t):=\J(\tilde{\S}(\z)+\d(\z,\t),\z) .
\end{align}
We assume that $\overline{\z} \in \R^n$ is a local minimum of~\eqref{eqn:dis_approx_opt_prob_pert_rs} which satisfies the first and second order optimality conditions when $\t=\vec{0}$, the zero vector in $\R^p$.

The discretization of the sensitivity operator~\eqref{eqn:sen_op} is given by
\begin{eqnarray}
\label{eqn:dis_sen_op}
\nabla_{\t} \F(\vec{0}) = - \H^{-1} \B \in \R^{n \times p}
\end{eqnarray}
where $\H \in \R^{n \times n}$ is the Hessian of the reduced space objective $\nabla_{\z,\z} \vec{\hat{J}}$, evaluated at $(\overline{\z},\vec{0})$, and $\B \in \R^{n \times p}$ is the Jacobian of $\nabla_\z \hat{J}$ with respect to $\t$, i.e. $\nabla_{\z,\t} \vec{\hat{J}}$, evaluated at $(\overline{\z},\vec{0})$. Throughout the article we will use $\nabla$ to denote differentiation of functions defined in Euclidean space with the differentiation variable denoted by subscripts. 

Our goal is to determine directions $\t \in \R^p$ for which the optimization problem is most sensitive. Specifically, we find directions that maximize $\vert \vert \nabla_{\t} \F(\vec{0}) \vert \vert_{\M_z}$ by computing the truncated Singular Value Decomposition.

\subsection{Model error inner product}
Our parameterization $\d$~\eqref{eqn:delta_kron} permits great expressiveness in the model discrepancy. This is motivated by our desire to develop algorithms which are portable across applications. However, in practice there is typically some known physics which should constrain it. We impose information from the underlying physics such as smoothness, conservation properties, boundary conditions, or invariances by weighting $\t$ to favor $\d$'s which respect the users specifications. This corresponds to the discretization of the inner product $\mathcal Y$ in Section~\ref{sec:opt_and_mfes} which we define through a symmetric positive definite weighting matrix $\L \in \R^{m \times m}$. We will discuss the choice further in Section~\ref{sec:numerical_results}, but our only requirement is that $\L$ be efficiently invertible.

Leveraging the Kronecker structure of $\d$~\eqref{eqn:delta_kron}, we compute the $\L$-weighted inner product of $\d$ with itself and arrive at the convenient expression
\begin{eqnarray}
\label{eqn:delta_ip}
(\d(\z,\t),\d(\z,\t))_\L = \t^T 
\left( \begin{array}{cc}
\L & \L \otimes \z^T \M_z \\
\L \otimes \M_z \z & \L \otimes \M_z \z \z^T \M_z
\end{array} \right) 
\t .
\end{eqnarray}
Since~\eqref{eqn:delta_ip} measures the size of the discrepancy for a specific $\z$, we compute the expectation of~\eqref{eqn:delta_ip} with respect to $\z$ to determine the size of $\d$ globally over the $\z$ space. To enable interpretability and achieve computational efficiency, we compute the expectation using a Gaussian probability measure (discretizing the measure $\mu$ in Section~\ref{sec:opt_and_mfes}) whose mean is $\overline{\z}$ and covariance is $\vec{\Gamma} \in \R^{n \times n}$. We leverage known properties of Gaussians to define $\vec{\Gamma}$ in a way to enforce length scales and smoothness properties, as discussed in Section~\ref{sec:numerical_results}.

Manipulating linear algebra expressions and properties of the mean and covariance of Gaussian random vectors, we observe that
\begin{eqnarray*}
\mathbb{E}_\z \left[ (\d(\z,\t),\d(\z,\t))_\L  \right]= \t^T 
\left( \begin{array}{cc}
\L & \L \otimes \overline{\z}^T \M_z \\
\L \otimes \M_z \overline{\z} & \L \otimes \E
\end{array} \right) 
\t ,
\end{eqnarray*}
where $\E = \M_z \left( \vec{\Gamma} + \overline{\z} \hspace{.4 mm}  \overline{\z}^T \right) \M_z$. This implies the symmetric positive definite weighting matrix
\begin{eqnarray}
\label{eqn:M_theta}
\M_\t =  \left( \begin{array}{cc}
\L & \L \otimes \overline{\z}^T \M_z \\
\L \otimes \M_z \overline{\z} & \L \otimes \E
\end{array} \right)  \in \R^{p \times p}
\end{eqnarray} 
to define the inner product on $\t \in \R^p$. 

\section{Sensitivity computation using Kronecker structure} \label{sec:computation}
Our developments thus far have provided general expressions for the model discrepancy and its associated inner product. The goal of our analysis is to compute the leading singular values and singular vectors of the discretized sensitivity operator~\eqref{eqn:dis_sen_op} with $\M_\t$ and $\M_z$ weighted inner products on its domain and range, respectively. This is challenging because the high dimensionality of $\t \in \R^p$ makes forming and computing with dense vectors in $\R^p$ prohibitive. We leverage the randomized Generalize Singular Value Decomposition (GSVD) algorithm from~\cite{saibaba_gsvd}. To ensure scalability, the computation is performed exclusively in $\R^m$ and $\R^n$ using the Kronecker structure of $\d$, without every forming or computing with a vector in $\R^p$.

\subsection{Kronecker structure in sensitivity matrices}
The Kronecker structure of $\d$ translates to the matrices $\M_\t$, $\M_\t^{-1}$, and $\B$. Given the structure of $\M_\t$~\eqref{eqn:M_theta}, Theorem~\ref{thm:M_theta_inv} provides $\M_\t^{-1}$ in a similar structure. 

\begin{theorem}
\label{thm:M_theta_inv}
\begin{eqnarray}
\label{eqn:M_theta_inv}
\M_\t^{-1} =  (1+\beta)
\left( \begin{array}{cc}
\L^{-1} & -\L^{-1} \otimes \x^T \\
- \L^{-1} \otimes \x & \L^{-1} \otimes \N
\end{array} \right)
\end{eqnarray}
where
\begin{eqnarray*}
\beta = \overline{\z}^T \vec{\Gamma}^{-1} \overline{\z}, \qquad \x =  \M_z^{-1} \left( \vec{\Gamma}^{-1} -\G \right) \overline{\z},
\end{eqnarray*}
\begin{eqnarray*}
 \N = \frac{1}{1+\beta} \M_z^{-1} \vec{\Gamma}^{-1} \M_z^{-1}, \qquad \text{and} \qquad \G = \frac{1}{1+\beta}\vec{\Gamma}^{-1} \overline{\z}} \hspace{.4 mm} {\overline{\z}^T \vec{\Gamma}^{-1} .
\end{eqnarray*}
\end{theorem}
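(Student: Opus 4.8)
The plan is to establish~\eqref{eqn:M_theta_inv} by direct verification: since $\M_\t$ in~\eqref{eqn:M_theta} is square (indeed symmetric positive definite, hence invertible), it suffices to confirm the one-sided identity $\M_\t\,\M_\t^{-1}=\I_p$ with $\M_\t^{-1}$ the proposed right-hand side. I would expand this product in $2\times 2$ block form. Every entry of every block is a product of two Kronecker factors, so the only algebraic tool required is the mixed-product property $(\A\otimes\B)(\C\otimes\D)=(\A\C)\otimes(\B\D)$ together with the cancellation $\L\L^{-1}=\I_m$ occurring in the first Kronecker slot of each term. After this cancellation the $(1,1)$ entry is a scalar multiple of $\I_m$, the two off-diagonal blocks take the form $\I_m\otimes r$ with $r\in\R^n$, and the $(2,2)$ block takes the form $\I_m\otimes R$ with $R\in\R^{n\times n}$; the theorem therefore reduces to four identities living in $\R^n$, all carrying the common prefactor $1+\beta$.

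The organizing observation that makes these four identities fall out — and that explains the definitions of $\beta$, $\x$, $\N$, and $\G$ — is that $\vec{\Gamma}^{-1}-\G$ is precisely the Sherman--Morrison inverse of the rank-one update $\vec{\Gamma}+\overline{\z}\,\overline{\z}^T$, i.e. $(\vec{\Gamma}+\overline{\z}\,\overline{\z}^T)(\vec{\Gamma}^{-1}-\G)=\I_n$ with $1+\beta=1+\overline{\z}^T\vec{\Gamma}^{-1}\overline{\z}$. Since $\E=\M_z(\vec{\Gamma}+\overline{\z}\,\overline{\z}^T)\M_z$, this immediately gives $\M_z\E^{-1}\M_z=\vec{\Gamma}^{-1}-\G$, $\E^{-1}\M_z\overline{\z}=\x$, and the scalar value $\overline{\z}^T\M_z\x=\overline{\z}^T(\vec{\Gamma}^{-1}-\G)\overline{\z}=\beta/(1+\beta)$, so that $1-\overline{\z}^T\M_z\x=1/(1+\beta)$. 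The $(1,1)$ block is then $(1+\beta)(1-\overline{\z}^T\M_z\x)\,\I_m=\I_m$, while the two off-diagonal blocks vanish because $\E\x=\M_z\overline{\z}$ and $\overline{\z}^T\M_z\N=\x^T$, each a one-line consequence of the Sherman--Morrison identity and of the vector relation $(\vec{\Gamma}^{-1}-\G)\overline{\z}=\frac{1}{1+\beta}\vec{\Gamma}^{-1}\overline{\z}$.

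As a cross-check and a cleaner conceptual derivation (which I would present as a remark rather than the formal proof), one can observe that $\M_\t$ is permutation-similar to $\L\otimes W$ for the bordered matrix
\[
W=\left(\begin{array}{cc} 1 & \overline{\z}^T\M_z \\ \M_z\overline{\z} & \E \end{array}\right)\in\R^{(n+1)\times(n+1)},
\]
so that $\M_\t^{-1}$ is the same permutation of $\L^{-1}\otimes W^{-1}$ and the entire problem collapses to inverting the single matrix $W$ by the block formula. The Schur complement of $\E$ in $W$ is $1-\overline{\z}^T\M_z\E^{-1}\M_z\overline{\z}=1/(1+\beta)$, whose reciprocal $1+\beta$ is exactly the scalar prefactor appearing in~\eqref{eqn:M_theta_inv}; the corner, border, and lower blocks of $W^{-1}$ then reproduce $1+\beta$, $-(1+\beta)\x^T$, $-(1+\beta)\x$, and $(1+\beta)\N$ after the same Sherman--Morrison simplifications.

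I expect the only genuine difficulty to be the $(2,2)$ block, where a rank-one term must cancel exactly. Writing $\E\N=\frac{1}{1+\beta}(\I_n+\M_z\overline{\z}\,\overline{\z}^T\vec{\Gamma}^{-1}\M_z^{-1})$ and $\M_z\overline{\z}\,\x^T=\frac{1}{1+\beta}\M_z\overline{\z}\,\overline{\z}^T\vec{\Gamma}^{-1}\M_z^{-1}$, the two rank-one contributions are identical and cancel, leaving $\E\N-\M_z\overline{\z}\,\x^T=\frac{1}{1+\beta}\I_n$; the overall prefactor $1+\beta$ then cancels the $\frac{1}{1+\beta}$ and turns this block into $\I_m\otimes\I_n=\I_{mn}$, completing the verification. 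The risk throughout is purely bookkeeping — keeping each $\frac{1}{1+\beta}$ factor attached to the correct term — since every simplification is dictated by the single Sherman--Morrison relation rather than by any deeper structure.
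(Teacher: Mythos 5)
Your proposal is correct, and its formal skeleton is the same as the paper's: multiply $\M_\t$ by the candidate inverse block-by-block, use the Kronecker mixed-product rule to cancel $\L\L^{-1}$ in the first slot, and reduce the claim to a handful of identities in $\R^n$ — indeed the four relations you isolate ($\overline{\z}^T\M_z\x=\beta/(1+\beta)$, $(\vec{\Gamma}^{-1}-\G)\overline{\z}=\frac{1}{1+\beta}\vec{\Gamma}^{-1}\overline{\z}$, $\E\x=\M_z\overline{\z}$, and the rank-one cancellation in the $(2,2)$ block) are exactly the identities 1--4 stated at the start of the paper's Appendix proof, and your block computations match the paper's line for line. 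Where you genuinely add something is in explaining those identities rather than merely asserting and checking them: recognizing $\vec{\Gamma}^{-1}-\G$ as the Sherman--Morrison inverse of $\vec{\Gamma}+\overline{\z}\,\overline{\z}^T$, hence $\M_z\E^{-1}\M_z=\vec{\Gamma}^{-1}-\G$ and $\x=\E^{-1}\M_z\overline{\z}$, makes the definitions of $\x$, $\N$, $\G$ inevitable instead of ad hoc. Your closing remark is also correct and is arguably the cleanest route: the permutation that regroups $(\t_0,\t_1,\dots,\t_m)$ into the $m$ blocks $(\theta_{i,0},\t_i)$ exhibits $\M_\t$ as similar to $\L\otimes W$ with $W$ the bordered $(n+1)\times(n+1)$ matrix, so the theorem collapses to inverting $W$ by the Schur-complement formula; the scalar Schur complement $1-\overline{\z}^T\M_z\E^{-1}\M_z\overline{\z}=1/(1+\beta)$ then explains the otherwise mysterious prefactor $1+\beta$, and the last block identity $\E^{-1}+(1+\beta)\x\x^T=(1+\beta)\N$ again follows from Sherman--Morrison. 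The paper offers neither of these structural observations, so while your verification buys nothing new computationally, it buys insight: it shows why the stated inverse has this form and would let one write down the analogous inverse directly if $\E$ or the weighting were changed.
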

\begin{proof}
A proof by multiplying $\M_\t \M_\t^{-1}$ is given in the Appendix.
\end{proof}

The matrices $\M_z$, $\vec{\Gamma}$, and $\L$ are typically sparse (or admit matrix-vector products using sparse multiplies/solves). Hence we efficiently compute matrix-vector products with $\M_\t$ and $\M_\t^{-1}$ without explicitly forming them.

To determine the form of $\B = \nabla_{\z, \t} \vec{\hat{J}}(\overline{\z},\vec{0})$ we apply the Chain rule to $\vec{\hat{J}}(\z,\t) = \J(\tilde{\S}(\z)+\d(\z,\t),\z)$ yielding the expression
\begin{eqnarray}
\label{eqn:J_grad}
\nabla_\z \vec{\hat{J}}= \nabla_\u \J \nabla_\z \tilde{\S} + \nabla_\u \J \nabla_\z \d + \nabla_\z \J 
\end{eqnarray}
where $\nabla_{\z}\tilde{\S}$ is the Jacobian of $\tilde{\S}$ with respect to $\z$. 

To facilitate subsequent linear algebra derivations we adopt the convention that gradients are row vectors. For notational simplicity, we omit the input arguments of functions whenever they are not needed. All functions will be evaluated at the nominal solution $\z=\overline{\z}$, $\t=\vec{0}$, and $\u=\tilde{\S}(\overline{\z})$.

 Differentiating~\eqref{eqn:J_grad} with respect to $\t$ yields
\begin{align}
\label{eqn:B_abstract}
\nabla_{\z,\t} \vec{\hat{J}}  = & \nabla_\z\tilde{\S}^T \nabla_{\u,\u} \J \nabla_\t\d + \nabla_\z\d^T \nabla_{\u,\u} J \nabla_\t\d \\
& + \nabla_\u \J \nabla_{\z,\t} \d + \nabla_{\z,\u} \J \nabla_\t\d \nonumber.
\end{align}
To simplify our subsequent analysis we assume that $ \nabla_{\z,\u} \J =0$. This is common on a wide range of optimization problems \footnote{This occurs when the objective function admits the structure $J(u,z) = J_{mis}(u) + J_{reg}(z)$ where $J_{mis}$ is a state misfit or design criteria and $J_{reg}$ is an optimization variable regularization.}. Our subsequent analysis can be done in the more general case but is beyond the scope of the article.

In order to write $\B$ in a Kronecker structure we need expressions for $\nabla_\t \d$, $\nabla_z\d$, and $\nabla_\u \J \nabla_{\z,\t}\d$ \footnote{The notation $\nabla_\u \J \nabla_{\z,\t}\d$ refers to the $(\z,\t)$ Hessian of the scalar valued function $\vec{v} \d$, where $\vec{v}=\nabla_\u \J$.}. Note that for $\nabla_\u \J \in \R^{1 \times m}$ we have
\begin{eqnarray*}
\nabla_\u \J \d(\z,\t) =
\left( \begin{array}{cc}
 \nabla_\u \J & \z^T \left( \nabla_\u \J \otimes \M_z \right)
 \end{array} \right)
  \t ,
\end{eqnarray*}
a bi-linear form which can be easily differentiated. Our previous expression~\eqref{eqn:delta_kron} for $\d$ is linear in $\t$ which permits a simple computation of $\nabla_\t \d$. We also write $\d$ as a linear function of $\z$,
\begin{eqnarray*}
\d(\z,\t) = \mathcal I(\t) + \mathcal K(\t) \M_z \z ,
\end{eqnarray*}
where $\mathcal I: \R^p \to \R^m$ and  $\mathcal K: \R^p \to \R^{m \times n}$ are linear operator defined as
\begin{align*}
\mathcal I 
\left( \begin{array}{c}
\t_0 \\
\t_1 \\
\vdots \\
\t_m
\end{array} \right) 
=
\t_0
\quad \text{and} \quad
\mathcal K 
\left( \begin{array}{c}
\t_0 \\
\t_1 \\
\vdots \\
\t_m
\end{array} \right) 
=
\left( \begin{array}{c}
\t_1^T \\
\t_2^T \\
\vdots \\
\t_m^T
\end{array} \right)
=
\left( \begin{array}{cccc}
\theta_{1,1} & \theta_{1,2} & \cdots & \theta_{1,n} \\
\theta_{2,1} & \theta_{2,2} & \cdots & \theta_{2,n} \\
\vdots & \vdots & \ddots & \vdots \\
\theta_{m,1} & \theta_{m,2} & \cdots & \theta_{m,n} \\
\end{array} \right) .
\end{align*}

Using these representations of $\d$ we express the derivatives as,
\begin{eqnarray*}
\nabla_\t \d(\z,\t) =
\left( \begin{array}{cc}
 \I_m &   \I_m \otimes \z^T \M_z 
 \end{array} \right)
\in \R^{m \times p},
 \qquad \nabla_\z \d(\z,\t) = \mathcal K(\t) \M_z \in \R^{m \times n}, 
\end{eqnarray*}
and
\begin{eqnarray*}
\nabla_\u \J \nabla_{\z,\t}\d(\z,\t) =
\left( \begin{array}{cc}
\vec{0} & \nabla_\u \J  \otimes \M_z 
\end{array} \right)
\in \R^{n \times p}.
\end{eqnarray*}

With our assumption that $ \nabla_{\z,\u} \J =0$, the derivatives of $\d$ above, and the observation that $\nabla_\z\d(\overline{\z},\vec{0})=0$, we have
\begin{eqnarray}
\label{eqn:B}
\B = 
\nabla_\z \tilde{\S}^T \nabla_{\u, \u} \J
\left( \begin{array}{cc}
 \I_m & \I_m \otimes \overline{\z}^T \M_z 
 \end{array} \right)
 +
\left( \begin{array}{cc}
\vec{0} & \nabla_\u \J \otimes \M_z
 \end{array} \right)
 \in \R^{n \times p} 
\end{eqnarray}
where $\nabla_\z \tilde{\S}$, $\nabla_\u \J$, and $\nabla_{\u,\u} \J$ are evaluated at $(\tilde{\S}(\overline{\z}),\overline{\z})$. Properties of Kronecker product transposes and symmetry of $\M_z$ gives
\begin{eqnarray}
\label{eqn:Bt}
\B^T =
\left( \begin{array}{c}
 \I_m \\
  \I_m \otimes \M_z \overline{\z} 
 \end{array} \right)
  \nabla_{\u, \u} \J  \nabla_\z \tilde{\S}
 +
\left( \begin{array}{c}
\vec{0} \\
 \nabla_\u \J^T \otimes \M_z
 \end{array} \right)
 \in \R^{p \times n} .
\end{eqnarray}

\subsection{Randomized GSVD algorithm}
Since the sensitivity operator $\nabla_{\t} \F(\vec{0})$ is a large dense matrix which is only accessible via matrix-vector products, we analyze it by computing its truncated Generalized Singular Value Decomposition (GSVD), where the inner products are defined by $\M_z$ and $\M_\t$. Algorithm~\ref{alg:GSVD} summarizes the randomized GSVD algorithm from~\cite{saibaba_gsvd} to highlight the matrix-vector products required to compute the truncated GSVD.  Algorithm~\ref{alg:GSVD} repeatedly calls CholQR~\footnote{To highlight which outputs of CholQR are needed we use Matlab notation with $\sim$ denoting an empty output argument for a function call.}, the Cholesky QR algorithm to orthogonalize in weighted inner products, which we present in Algorithm~\ref{alg:cholQR}.

\begin{algorithm}
\caption{Randomized GSVD for model discrepancy sensitivities}
\label{alg:GSVD}
\begin{algorithmic}[1]
\State \textbf{Input: } target rank $k$, oversampling factor $\ell$, subspace iterations $q$ 
\State Generate a random matrix $\vec{\Omega}$ \hspace{46.7 mm} $\vec{\Omega} \in \R^{\mathbf{p} \times (k+\ell)}$ 
\State Compute $\Y = \H^{-1} \B \vec{\Omega}$ \hspace{53.7 mm} $\Y \in \R^{n \times (k+\ell)}$
\State $[\sim,\M_z \Q,\sim]= \text{CholQR}(\Y,\M_z)$  \hspace{33.8 mm} $\M_z \Q \in \R^{n \times (k+\ell)}$
\For{$N$ = 1 to $q$}
\State Compute $\Y = \B^T \H^{-1} \M_z \Q$ \hspace{40 mm} $\Y \in \R^{\mathbf{p} \times (k+\ell)}$
\State $[\sim,\M_\t^{-1}\Q,\sim]= \text{CholQR}(\Y,\M_\t^{-1})$ \hspace{22.1 mm} $\M_\t^{-1} \Q \in \R^{\mathbf{p} \times (k+\ell)}$ 
\State Compute $\Y = \H^{-1} \B \M_\t^{-1} \Q $  \hspace{40.1 mm} $\Y \in \R^{n \times (k+\ell)}$
\State $[\Q,\M_z \Q,\sim]= \text{CholQR}(\Y,\M_z)$ \hspace{34.3 mm} $\Q \in \R^{n \times (k+\ell)}$
\EndFor
\State Compute $\W = \B^T \H^{-1} \M_z \Q$ \hspace{43 mm} $\W \in \R^{\mathbf{p} \times (k+\ell)}$
\State $[\Q_\W,\sim,\vec{R}_\W]= \text{CholQR}(\M_\t^{-1} \W,\M_\t)$ \hspace{25.1 mm} $\Q_\W \in \R^{\mathbf{p} \times (k+\ell)}$
\State Compute the SVD of $\vec{R}_\W^T=\vec{U}_\W \vec{\Sigma} \vec{V}_\W^T$ \hspace{8 mm} $\vec{U}_\W, \vec{\Sigma},\vec{V}_\W \in \R^{(k+\ell) \times (k+\ell)}$
\State Compute $\vec{U} = \Q \vec{U}_\W$ and $\vec{V} = \Q_\W \vec{V}_W$ \hspace{6.1 mm} $\vec{U} \in \R^{n \times (k+\ell)}$, $\vec{V} \in \R^{\mathbf{p} \times (k+\ell)}$
\State \textbf{Return: } Estimated truncated GSVD $\H^{-1} \B \approx \vec{U} \vec{\Sigma} \vec{V}^T \M_\t$
\end{algorithmic}
\end{algorithm}

\begin{algorithm}
\caption{CholQR}
\label{alg:cholQR}
\begin{algorithmic}[1]
\State \textbf{Input: } Rectangular matrix $\Y$, symmetric positive definite matrix $\M$
\State $\vec{Z} = \M \Y$
\State $\vec{C} = \Y^T \vec{Z}$
\State Compute the Cholesky factorization $\vec{C} = \vec{R}^T \vec{R}$
\State $\Q = \Y \vec{R}^{-1}$
\State $\M \Q = \vec{Z} \vec{R}^{-1}$
\State \textbf{Return: } $\Q $, $\M \Q$, $\vec{R}$ such that $\Y = \Q \vec{R}$ and $\Q^T \M \Q = \I$
\end{algorithmic}
\end{algorithm}

Notice that the majority of steps in Algorithm~\ref{alg:GSVD} involve computation with matrices containing $p$ rows or columns. Storing, communicating, and computing with vectors in $\R^p$ is intractable for many applications since $p=m(n+1)$, or as in the numerical results, $p=\mathcal(10^8)$. We demonstrate how the computation with vectors in $\R^p$ can be compressed to computation in $\R^m$ and $\R^n$ thus enabling efficient estimation of the truncated GSVD.

\subsection{Computation with Kronecker structure}
Each line of Algorithm~\ref{alg:GSVD} can be executed efficiently leveraging the Kronecker structure observed in $\M_\t^{-1}$, $\B$, and $\B^T$. We represent each matrix of size $p \times (k+\ell)$ in Algorithm~\ref{alg:GSVD} using a family of $d=k+\ell$ vectors in the form
\begin{eqnarray}
\label{eqn:kron_vec}
\left( \begin{array}{c}
a \u_N \\
\u_N \otimes \z_0
\end{array} \right)
+
\left( \begin{array}{c}
b_N \u_0 \\
\u_0 \otimes \z_N
\end{array} \right) \in \R^p
\end{eqnarray}
where $a \in \R$, $\b=(b_1,b_2,\dots,b_{d})^T \in \R^{d}$, $\u_N \in \R^m$ and $\z_N \in \R^n$, $N=0,1,\dots,d$. Hence the storage and communication requirement for a $p \times d$ matrix is $(m+n+1)(d+1)$ floating point numbers. We will refer to~\eqref{eqn:kron_vec} as a rank-2 Kronecker form. The subsequent developments show how each line of Algorithm~\ref{alg:GSVD} can be executed using matrices represented in rank-2 Kronecker form.
\subsubsection*{Lines 2-3}
A naive implementation of these lines requires that we generate the random matrix $\vec{\Omega} \in \R^{p \times d}$ and compute $d$ matrix-vector products $\B \vec{\omega}_N$, $N=1,2,\dots,d$, where $\vec{\omega_N}$ is the $N^{th}$ column of $\vec{\Omega}$. There are a number of different sampling strategies available for the generation of $\vec{\Omega}$; sampling each entry independently from a standard normal distribution is prominent in practice. We will adopt this sampling strategy to facilitate our subsequent derivations. Theorem~\ref{thm:B_omega} shows that we can compute $\B \vec{\Omega}$ using random vectors and computation in $\R^m$ and $\R^n$ rather than $\R^p$.
\begin{theorem}
\label{thm:B_omega}
Let $\vec{\omega} \sim N(\vec{0},\I_p)$ and $\vec{\omega}_{\u_0} \sim N(\vec{0},\I_m)$ be Gaussian random vectors in $\R^p$ and $\R^m$, respectively, and let
\begin{align*}
\left(
\begin{array}{c}
\vec{\omega}_{\u} \\
\vec{\omega}_{\z}
\end{array}
\right)
\sim
N
\left( 
\vec{0} ,
\left(
\begin{array}{cc}
\vert \vert \M_z \overline{\z} \vert \vert_2^2 \I_m & \nabla_\u \J^T (\M_z \overline{\z})^T \\
(\M_z \overline{\z}) \nabla_\u \J & \vert \vert \nabla_\u \J \vert \vert_2^2 \I_n
\end{array}
\right)
\right)
\end{align*}
be a  Gaussian random vector in $\R^{m + n}$. Then 
\begin{eqnarray*}
\B \vec{\omega} \qquad \text{and} \qquad \nabla_\z \tilde{\S}^T \nabla_{\u,\u}\J (\vec{\omega}_{\u_0} + \vec{\omega}_\u ) + \M_z \vec{\omega}_\z
\end{eqnarray*}
have the same distribution.
\end{theorem}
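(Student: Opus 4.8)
The plan is to exploit the explicit Kronecker form of $\B$ in~\eqref{eqn:B} together with the independence of the blocks of a standard Gaussian vector, and then match the resulting random vector to the claimed law by computing its mean and covariance directly. The key realization is that applying $\B$ to $\vec{\omega}$ contracts the Kronecker factors against the blocks of $\vec{\omega}$, and that equality in distribution for Gaussians reduces to equality of first and second moments.

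First I would partition $\vec{\omega} \sim N(\vec{0},\I_p)$ conformally with $\t$, writing $\vec{\omega}=(\vec{\omega}_0^T,\vec{\omega}_1^T,\dots,\vec{\omega}_m^T)^T$ with $\vec{\omega}_0 \in \R^m$ and $\vec{\omega}_i \in \R^n$ for $i=1,\dots,m$. Because the covariance is $\I_p$, the blocks $\vec{\omega}_0,\vec{\omega}_1,\dots,\vec{\omega}_m$ are mutually independent, with $\vec{\omega}_0 \sim N(\vec{0},\I_m)$ and each $\vec{\omega}_i \sim N(\vec{0},\I_n)$. Applying $\B$ from~\eqref{eqn:B} and carrying out the Kronecker contractions $(\I_m \otimes \overline{\z}^T \M_z)\vec{\omega}$ and $(\nabla_\u\J \otimes \M_z)\vec{\omega}$, a direct computation gives
\begin{eqnarray*}
\B \vec{\omega} = \nabla_\z\tilde{\S}^T \nabla_{\u,\u}\J \left( \vec{\omega}_0 + \vec{\omega}_\u \right) + \M_z \vec{\omega}_\z ,
\end{eqnarray*}
where $\vec{\omega}_\u \in \R^m$ has $i$th entry $(\M_z\overline{\z})^T\vec{\omega}_i$ and $\vec{\omega}_\z = \sum_{i=1}^m (\nabla_\u\J)_i \vec{\omega}_i \in \R^n$. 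Identifying $\vec{\omega}_{\u_0}:=\vec{\omega}_0$ already reproduces the stated functional form, so it remains only to check that the distributions agree.

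Next I would verify the joint law of $(\vec{\omega}_\u^T,\vec{\omega}_\z^T)^T$. Since both are linear images of the jointly Gaussian blocks $\{\vec{\omega}_i\}_{i=1}^m$, the pair is Gaussian with mean $\vec{0}$, so only the covariance must be matched. The diagonal blocks follow quickly: the entries of $\vec{\omega}_\u$ are $(\M_z\overline{\z})^T\vec{\omega}_i$, independent across $i$, giving $\text{Cov}(\vec{\omega}_\u)=\vert\vert \M_z\overline{\z} \vert\vert_2^2\,\I_m$, while $\vec{\omega}_\z$ is a sum of independent terms with $\text{Cov}(\vec{\omega}_\z)=\sum_{i=1}^m (\nabla_\u\J)_i^2\,\I_n = \vert\vert \nabla_\u\J \vert\vert_2^2\,\I_n$. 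For the off-diagonal block I would compute $\text{Cov}((\vec{\omega}_\u)_j,\vec{\omega}_\z)$ and observe that, by independence of $\vec{\omega}_i$ and $\vec{\omega}_j$ for $i \neq j$, only the $i=j$ term survives, yielding the row $(\nabla_\u\J)_j (\M_z\overline{\z})^T$ and hence $\text{Cov}(\vec{\omega}_\u,\vec{\omega}_\z)=\nabla_\u\J^T(\M_z\overline{\z})^T$, exactly the stated cross-covariance. Finally, because $\vec{\omega}_{\u_0}=\vec{\omega}_0$ depends only on $\vec{\omega}_0$, which is independent of $\vec{\omega}_1,\dots,\vec{\omega}_m$, it is independent of $(\vec{\omega}_\u,\vec{\omega}_\z)$, matching the claim that $\vec{\omega}_{\u_0}$ is an independent $N(\vec{0},\I_m)$ vector.

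The main obstacle is bookkeeping rather than depth. One must apply the Kronecker and row-vector conventions consistently; in particular, remembering that $\nabla_\u\J$ is a row vector, so $\nabla_\u\J^T(\M_z\overline{\z})^T$ is a rank-one $m \times n$ matrix, and the off-diagonal covariance block is the one place where the index matching forced by independence is essential. Once the covariance of $(\vec{\omega}_\u,\vec{\omega}_\z)$ and the independence of $\vec{\omega}_{\u_0}$ are established, equality in distribution is immediate because a Gaussian vector is characterized by its mean and covariance.
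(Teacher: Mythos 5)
Your proof is correct and follows essentially the same route as the paper's: both expand $\B \vec{\omega}$ through the Kronecker structure of $\B$ to obtain the stated functional form, and then identify the law of the resulting vector by Gaussianity of linear images plus matching of mean and covariance. The only difference is bookkeeping---where the paper packages the pair $\left( \mathcal K(\vec{\omega}) \M_z \overline{\z}, \mathcal K(\vec{\omega})^T \nabla_\u \J^T \right)$ as the image of $\vec{\omega}$ under an explicit linear map $\T$ and cites the covariance $\T \T^T$, you compute the same covariance blocks entrywise from the independent blocks of $\vec{\omega}$, and you additionally make explicit the independence of $\vec{\omega}_0$ from that pair, a point the paper leaves implicit.
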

\begin{proof}
Notice that
\begin{align*}
\B \vec{\omega} &= \nabla_\z \tilde{\S}^T \nabla_{\u, \u} \J
\left( \begin{array}{cc}
 \I_m & \I_m \otimes \left( \overline{\z}^T \M_z \right) 
 \end{array} \right) \vec{\omega}
 +
\left( \begin{array}{cc}
\vec{0} & \nabla_\u \J \otimes \M_z
 \end{array} \right) \vec{\omega} \\
& = \nabla_\z \tilde{\S}^T \nabla_{\u, \u} \J \mathcal I(\vec{\omega}) +  \nabla_\z \tilde{\S}^T \nabla_{\u, \u} \J  \mathcal K(\vec{\omega}) \M_z \overline{\z}  +  \M_z \mathcal K(\vec{\omega})^T \nabla_\u \J^T \\
& = \nabla_\z \tilde{\S}^T \nabla_{\u, \u} \J \left( \mathcal I(\vec{\omega}) + \mathcal K(\vec{\omega}) \M_z \overline{\z} \right) + \M_z \mathcal K(\vec{\omega})^T \nabla_\u \J^T .
\end{align*}
Observe that the random vectors $\mathcal K(\vec{\omega}) \M_z \overline{\z} \in \R^m$ and $\mathcal K(\vec{\omega})^T \nabla_\u \J^T \in \R^n$ correspond to applying the linear transformation
\begin{align*}
\vec{T} = \left( \begin{array}{cc}
\vec{0}_{m \times m} & \I_m \otimes ( \M_z \overline{\z})^T \\
\vec{0}_{n \times m} & \nabla_\u \J \otimes \I_n
 \end{array}
  \right) \in \R^{(m+n) \times p}
\end{align*}
to $\vec{\omega}$. Hence 
\begin{align*}
\left(
\begin{array}{c}
\mathcal K(\vec{\omega}) \M_z \overline{\z} \\
\mathcal K(\vec{\omega})^T \nabla_\u \J^T
\end{array}
\right)
\end{align*}
is a Gaussian random vector in $\R^{m+n}$ with mean $\vec{0}$ and covariance matrix $\T \T^T$, i.e. the same distribution as $\begin{pmatrix} \vec{\omega}_\u^T & \vec{\omega}_\z^T \end{pmatrix}^T$.
\end{proof}
Theorem~\ref{thm:B_omega} indicates that rather than forming a random matrix $\vec{\Omega} \in \R^{p \times d}$, we form two random matrices $\vec{\Omega}_{\u_0} \in \R^{m \times d}$ and $\vec{\Omega}_{\u,\z} \in \R^{(m+n) \times d}$ and perform computation in $\R^m$ and $\R^n$. This dimension reduction is enabled because taking matrix-vector products with $\mathcal K(\vec{\omega})$, a $m \times n$ matrix with independent standard normal entries, sums over the rows to produce $m$ and $n$ dimensional random vectors.

\subsubsection*{Line 6}
This line requires computation of $d$ matrix-vector products of the form $\B^T \v_N \in \R^p$ where $\v_N \in \R^n$ is a column of $\H^{-1} \M_z \Q \in \R^{n \times d}$. Recalling the Kronecker structure of $\B^T$, we have
\begin{align*}
\B^T \v_N & =
\left( \begin{array}{c}
 \I_m \\
  \I_m \otimes \left(  \M_z \overline{\z} \right) 
 \end{array} \right)
  \left(  \nabla_{\u, \u} \J  \nabla_\z \tilde{\S} \v_N \otimes 1\right)
 +
\left( \begin{array}{c}
\vec{0} \\
 \nabla_\u \J^T \otimes \M_z
 \end{array} \right) \left( 1 \otimes \v_N \right) \\
&= \left( \begin{array}{c}
   \nabla_{\u, \u} \J  \nabla_\z \tilde{\S} \v_N \\
 \nabla_{\u, \u} \J  \nabla_\z \tilde{\S} \v_N \otimes \left(  \M_z \overline{\z} \right) 
 \end{array} \right)
 +
\left( \begin{array}{c}
\vec{0} \\
 \nabla_\u \J^T \otimes \M_z \v_N
 \end{array} \right) .
\end{align*}

\subsubsection*{Line 7}
The result of line 6 is a matrix $\Y \in \R^{p \times d}$, stored in the rank-2 Kronecker form~\eqref{eqn:kron_vec} with $a=1$, $\b=\vec{0}$, whose columns we denote as
\begin{eqnarray} \label{eqn:y_N}
\y_N =
\left( \begin{array}{c}
 \u_N  \\
 \u_N \otimes \z_0 
 \end{array} \right)
 +
 \left( \begin{array}{c}
 \vec{0} \\
  \u_0 \otimes \z_N
   \end{array} \right)
   \in \R^p
\end{eqnarray}
to simplify notation for what follows. The CholQR call in line 7 requires the computation of
\begin{eqnarray*}
\vec{Z}= \M_\t^{-1} \Y \in \R^{p \times d}, \quad \C = \Y^T \vec{Z} = \Y^T \M_\t^{-1} \Y \in \R^{d \times d}, \quad \text{and} \quad \vec{Z} \vec{R}^{-1} \in \R^{p \times d} .
\end{eqnarray*}

All of these computations are performed in $\R^m$ and $\R^n$. Recalling $\M_\t^{-1}$~\eqref{eqn:M_theta_inv} and $\y_N$~\eqref{eqn:y_N}, we compute the $N^{th}$ column of $\vec{Z} \in \R^{p \times d}$ as
\begin{eqnarray*}
\z_N= (1+\beta) \left[
\left( \begin{array}{c}
(1-\x^T \z_0) \L^{-1} \u_N \\
\L^{-1} \u_N \otimes \left( \N \z_0 - \x \right)
\end{array} \right)
+
\left( \begin{array}{c}
- (\x^T \z_N) \L^{-1} \u_0 \\
\L^{-1} \u_0 \otimes \N \z_N
\end{array} \right) \right] .
\end{eqnarray*}
Hence $\vec{Z}$ is stored in the rank-2 Kronecker form~\eqref{eqn:kron_vec}.

The matrix-matrix product $\C = \Y^T \vec{Z}$ requires computing $d^2$ inner products involving columns of $\Y$ and $\vec{Z}$, which are in $\R^p$. However, recalling~\eqref{eqn:kron_vec}, we compute the inner products in $\R^p$ by computing four inner products in $\R^m$ and four inner products in $\R^n$.

Lastly, since $\vec{R} \in \R^{d \times d}$ is stored as a dense upper triangular matrix, we compute the product of $\vec{Z}$ with $\vec{R}^{-1}$ by taking linear combinations of the columns of $\vec{Z}$. Since it is stored in the rank-2 Kronecker form~\eqref{eqn:kron_vec}, we compute the linear combinations of vectors in $\R^m$ and $\R^n$. The output of line 7 is another $p \times d$ matrix represented in the form of~\eqref{eqn:kron_vec}.

\subsubsection*{Line 8}
Line 8 involves $d$ matrix-vector products involving $\B$ with incoming vectors of the form~\eqref{eqn:kron_vec}. Recalling the Kronecker structure of $\B$~\eqref{eqn:B}, we have matrix-vector products of the form
\begin{align*}
\B 
\left[
\left( \begin{array}{c}
a \v_{\u_1} \\
\v_{\u_1} \otimes \v_{\z_1} 
\end{array} \right)
+
\left( \begin{array}{c}
b \v_{\u_2} \\
\v_{\u_2} \otimes \v_{\z_2} 
\end{array} \right)
\right]
&= 
\left(a+\overline{\z}^T \M_z \v_{\z_1} \right)\nabla_\z \tilde{\S}^T \nabla_{\u, \u} \J \v_{\u_1} \\
& + 
\left( \nabla_\u \J \v_{\u_1} \right) \M_z \v_{\z_1} \\
&+ 
\left(b+\overline{\z}^T \M_z \v_{\z_2} \right)\nabla_\z \tilde{\S}^T \nabla_{\u, \u} \J \v_{\u_2} \\
& + 
\left( \nabla_\u \J \v_{\u_2} \right) \M_z \v_{\z_2} 
\end{align*}
which requires matrix-vector products and inner products in $\R^m$ and $\R^n$ to return a vector in $\R^n$.

\subsubsection*{Lines 9-14}
For conciseness, we omit a complete discussion of how each of the subsequent lines of Algorithm~\ref{alg:GSVD} are implemented. We note that the implementation of lines 11 and 12 mirrors lines 6 and 7 where the matrix-vector products and vector inner products are done in the rank-2 Kronecker form. The matrix-matrix product $\Q_\W \V$ in line 14 is also executed by taking linear combinations of columns of $\Q_\W$, which are stored in rank-2 Kronecker form.

\subsection{Computational cost, communication, and memory requirements} \label{ssec:cost_comm_memory}
In our implementation of Algorithm~\ref{alg:GSVD} we have replaced all computation, storage, and communication of vectors in $\R^p$ with the rank-2 Kronecker form~\eqref{eqn:kron_vec}. Table~\ref{tab:compute_cost} summarizes the matrix-vector products required to execute Algorithm~\ref{alg:GSVD}, omitting dense linear algebra on matrices in $\R^{d \times d}$, $\R^{m\times d}$, and $\R^{n \times d}$. The first row in Table~\ref{tab:compute_cost} is an initialization step where matrix-vector products needed in subsequent lines are computed once and stored for later use.
	\begin{table}[!ht]
	    \centering
	    \begin{tabular}{|c|c|c|c|c|c|c|c|c|c}
	    \hline
	    \hspace{1 mm} & $\M_z$ & $\M_z^{-1}$  & $\vec{\Gamma}^{-1}$ & $\L^{-1}$ & $\nabla_{\u,\u} \J$ & $\nabla_\z \tilde{\S}$ & $\nabla_\z \tilde{\S}^T$ & $\nabla_{\z,\z} \vec{\hat{J}}^{-1}$ \\ \hline
	    Initialization & 1 & 1 & 1 & 1& 0 & 0& 0 & 0 \\ \hline
	    Line 3 & $d$  & 0 & 0 & 0 & $d$ & 0 & $d$ & $d$ \\
	    Line 4 & $d$  & 0 & 0 & 0 & 0 & 0 & 0 & 0 \\ \hline
	    Line 6 & $d$  & 0 & 0 &  0 & $d$ & $d$ & 0 & $d$ \\
	    Line 7 & 0 & $2d$  & $d$ &  $d$ & 0 & 0 & 0 & 0 \\
	    Line 8 & $d+1$  & 0 & 0 &  0 & $d+1$ & 0 & $d+1$ & $d$ \\
	    Line 9 & $d$ &  0 & 0 &  0 & 0 & 0 & 0 & 0 \\ \hline
	    Line 11 & $d$ &  0 & 0 &  0 & $d$ & $d$ & 0 & $d$ \\
	    Line 12 & 0 & $2d$ & $d$ &  $d$ & 0 & 0 & 0 & 0 \\ \hline
	     \end{tabular}
	    \caption{Summary of the matrix-vector products required to execute Algorithm~\ref{alg:GSVD}. Note that $d=k+\ell$ is the target rank plus oversampling factor and that lines 6-9 are in a for loop which is executed $q$ times.}
	    \label{tab:compute_cost}
	\end{table}
	
To analyze the computational cost we will first unpack the computation required for matrix-vector products with $\nabla_\z \tilde{\S}$ and $\nabla_\z \tilde{\S}^T$. Recalling that $\tilde{\S}$ is the solution operator which satisfies $\tilde{\c}(\tilde{\S}(\z),\z)=0$ for all $z$, we apply the Chain Rule and observe that
\begin{eqnarray*}
\nabla_\z \tilde{\S} = -\nabla_{\u} \tilde{\c}^{-1} \nabla_{\z} \tilde{\c} .
\end{eqnarray*}
The Jacobians $\nabla_{\u} \tilde{\c}$ and $\nabla_{\z} \tilde{\c}$ are sparse matrices which we assemble once prior to executing Algorithm~\ref{alg:GSVD}. Hence the cost of computing a matrix-vector product with $\nabla_\z \tilde{\S}$ (or $\nabla_\z \tilde{\S}^T$) is approximately the cost of a linear system solve with $\nabla_{\u} \tilde{\c}$. We will use these solves as a measure of cost since they depend upon properties of the physics (symmetry, availability of preconditioners, etc.) and the system size (which determines whether sparse direct or iterative solvers should be used).

The Hessian $\nabla_{\z,\z} \vec{\hat{J}}$ is large, dense, and only accessible via matrix-vector products. Furthermore, such matrix-vector products are computed using incremental adjoint equations where each product requires solving a linear system with coefficient matrix $\nabla_{\u} \tilde{\c}$ and another linear system with coefficient matrix $\nabla_{\u} \tilde{\c}^T$. Because of symmetry, we compute matrix-vector products with $\nabla_{\z,\z} \vec{\hat{J}}^{-1}$ using a conjugate gradient iterative solver. Assuming that an average of $L_{CG}$ iterations are needed for the solves, we will measure the cost of $\nabla_{\z,\z} \vec{\hat{J}}^{-1}$ matrix-vector products as $L_{CG}$ solves with $\nabla_{\u} \tilde{\c}$ plus $L_{CG}$ solves with $\nabla_{\u} \tilde{\c}^T$.

Typically, we can compute matrix-vector products with $\M_z^{-1}$ and $\L^{-1}$ via sparse direct or iterative solvers. We measure the cost of inverting these matrices in our total cost analysis with a recognition that these solves are nontrivial, but are well understood. We omit the cost of matrix-vector products with $\vec{\Gamma}^{-1}$ since the covariance matrix $\vec{\Gamma}$ is typically defined as the inverse of a differential operator, hence matrix-vector products with $\vec{\Gamma}^{-1}$ do not require linear solves.

Relative to the cost of these linear system solves, the computational cost of matrix-vector product with $\M_z$ and $\nabla_{\u,\u} \J$ is negligible. We summarize the total approximate cost of the algorithm in large linear system solves in Table~\ref{tab:linear_solves}. Typically, linear system solves with $\nabla_{\u} \tilde{\c}$ and $\nabla_{\u} \tilde{\c}^T$ will have a comparable or greater cost than that of $\M_z$ and $\L$. With this in mind, the total cost of Algorithm~\ref{alg:GSVD} is approximately $2(q+1)d(1+2L_{CG})$ large linear system solves (involving $\nabla_{\u} \tilde{\c}$ or $\nabla_{\u} \tilde{\c}^T$).

	\begin{table}[!ht]
	    \centering
	    \begin{tabular}{|c|c|c|c|}
	    \hline
	     $\M_z$ & $\L$ & $\nabla_{\u} \tilde{\c}$ & $\nabla_{\u} \tilde{\c}^T$ \\ \hline
	     $2(q+1)d+1$ & $(q+1)d+1$ & $(q+1)d(1+2L_{CG})$ & $(q+1)d(1+2L_{CG})+q$ \\ \hline
	     \end{tabular}
	    \caption{Summary of large linear system solves required by Algorithm~\ref{alg:GSVD}. Note that $d=k+\ell$ is the target rank plus oversampling factor, q is the number of subspace iterations, and $L_{CG}$ is the average number of iterations needed to invert $\nabla_{\z,\z} \vec{\hat{J}}^{-1}$ using the conjugate gradient algorithm.}
	    \label{tab:linear_solves}
	\end{table}

The communication requirement depends on specifics of the hardware on which the algorithm is executed. With an eye toward high performance computing, we assume that there are computational resources available so that we compute matrix-vector products on $d$ different processors (or collection of processors). Then thanks to the independence of the random vectors initializing the algorithm, the matrix-vector products (which involve linear system solves) are  parallelized so that $d$ matrix-vector products are computed simultaneously. This reduces the wall clock time by approximately a factor of $d$ (with some overhead from synchronization and communication). In this case, the communication bottle neck is associated with sharing vectors across processes. Because of its rank-2 Kronecker form, this cost is $\mathcal O(m+n)$ rather than $\mathcal O(p)=\mathcal O(mn)$. 

\subsection{Post-processing and visualizing sensitivities}
\label{sec:post_proc}
The result of the truncated GSVD is the set 
\begin{eqnarray*}
\left\{
\sigma_N,\w_N,
\t_N = \left( \begin{array}{c}
a \u_N \\
\u_N \otimes \z_0
\end{array} \right)
+
\left( \begin{array}{c}
b_N \u_0 \\
\u_0 \otimes \z_N
\end{array} \right)
\right\}_{N=1}^k
\end{eqnarray*}
where $\sigma_N \in \R$ are the singular values, $\w_N \in \R^n$ are the left singular vectors, and $\t_N \in \R^p$ are the right singular vectors of the sensitivity matrix~\eqref{eqn:dis_sen_op}, i.e. $\nabla_{\t} \F(\vec{0}) \t_N = \sigma_N \w_N$. The right singular vectors are defined by $\u_0,\u_N \in \R^m$, $\z_0,\z_N \in \R^n$, $a,b_N \in \R$ in the rank-2 Kronecker form and their span defines the subspace of model discrepancy perturbations $\d(\z,\t_N)$ which have the greatest influence on the solution of the optimization problem (perturbing $\overline{\z}$ by $\sigma_N \w_N$). 

To process and visualize these perturbations we evaluate $\d$~\eqref{eqn:delta_kron}, centered at the nominal solution $\overline{\z}$, for an arbitrary linear combination, defined by $\c \in \R^k$, of right singular vectors. This yields the expression 
\begin{align}
\label{eqn:eval_delta}
\d \left(\z,\sum\limits_{N=1}^k c_N \t_N \right) &= (a+\overline{\z}^T \M_z \z_0) \vec{U} \c + ( \c^T \b + \overline{\z}^T \M_z \vec{Z} \c ) \u_0 \\
&+ (\z-\overline{\z})^T \M_z \z_0 \vec{U} \c + (\z-\overline{\z})^T \M_z \vec{Z} \c \u_0 \nonumber
\end{align}
where the right singular vectors are represented using $\vec{U} = \left( \u_1,\u_2,\dots,\u_k \right) \in \R^{m \times d}$, $\vec{Z}=\left( \z_1,\z_2,\dots,\z_k \right) \in \R^{n \times d}$, and $\b=(b_1,b_2,\dots,b_k)^T \in \R^k$. 

Varying $\c \in \R^k$ generates a collection of model perturbations (mapping $\Z_h$ to $\U_h$) defining a $d$ dimensional subspace of operators which causes the greatest change in the optimal solution. Visualizing these model perturbations and corresponding perturbed optimal solutions (linear combinations of $\sigma_N \w_N$'s) provides considerable insight which is used to direct model development, mitigate against worst case model discrepancies, or associate uncertainty with the optimal solution. 

\section{Numerical Results}
\label{sec:numerical_results}
We present three examples to demonstrate our proposed approach. The first, an illustrative example, provides intuition for the use of the sensitivity operator. The second, an optimal source control problem constrained by the nonlinear convection-diffusion-reaction PDE in two dimensions, aids to highlight the computation involved in the proposed algorithm. The third, an optimal boundary heat flux control problem constrained by a thermal-fluid model, demonstrates the physical insight our analysis provides on a nonlinear multi-physics application. For ease of exposition and details for reproducibility, we provide problem parameters for all examples in Table~\ref{tab:numerical_results_problem_params} (in the Appendix) and restrict the text to discussing parameters of greatest relevance to our algorithm.

\subsection{Illustrative example}
Let $S(z)$ be the solution operator for the advection-diffusion equation 
\begin{align*}
& -\frac{d^2 u}{dx^2} + \frac{du}{dx} = z, \qquad u(0) = u'(1) = 0.
\end{align*}
Given a true source $z^\star(x)$ and corresponding true state $T=S(z^\star)$, we consider the optimization problem 
\begin{align}
\label{eq:illustrative_ex}
& \min_{z} \frac{1}{2} \int_\Omega (\tilde{S}(z) - T)^2
\end{align}
where $\tilde{S}(z)$ is the solution operator for the diffusion equation
\begin{align*}
-\frac{d^2 u}{dx^2} = z, \qquad u(0) = u'(1) = 0.
\end{align*}
This emulates a common scenario in practice where an approximate model (the diffusion equation) is used to facilitate optimization because the high-fidelity model (the advection-diffusion equation) is either unknown or is computationally intractable. We analyze the sensitivity of the optimal solution of~\eqref{eq:illustrative_ex} with respect to model discrepancy, which in this illustrative example is caused by the absence of the advection term $u'$.

In practice, the discrepancy $S-\tilde{S}$ is not known. In such cases, computing the leading singular values and vectors of $\mathcal F_\delta'(\delta_0)$ show which forms of model discrepancy will cause the greatest change in the optimal solution, and how it will be changed. To illustrate the mathematical properties of the post-optimality sensitivity operator $\mathcal F_\delta'(\delta_0)$, we compute $S-\tilde{S}$ explicitly since the high-fidelity model may be executed efficiently. The left panel of Figure~\ref{fig:illustrative} displays the discrepancy in the state, $S-\tilde{S}$. Its right panel shows $\overline{z}$, the solution of~\eqref{eq:illustrative_ex}, $z^\star$, the high-fidelity solution, and $\overline{z} + \mathcal F_\delta'(\delta_0)(S-\tilde{S})$, the sensitivity prediction of optimal solution given the model discrepancy. We observe that the sensitivity prediction of the optimal solution is a good approximation of the high-fidelity solution. In general, $\overline{z} + \mathcal F_\delta'(\delta_0)(S-\tilde{S})$ will not coincide with $z^\star$ since $\mathcal F_\delta'(\delta_0)$ is a linearization; however, it provides valuable insight for the lower-fidelity problem which completely lacks the advection term.

\begin{figure}[h]
\centering
  \includegraphics[width=0.35\textwidth]{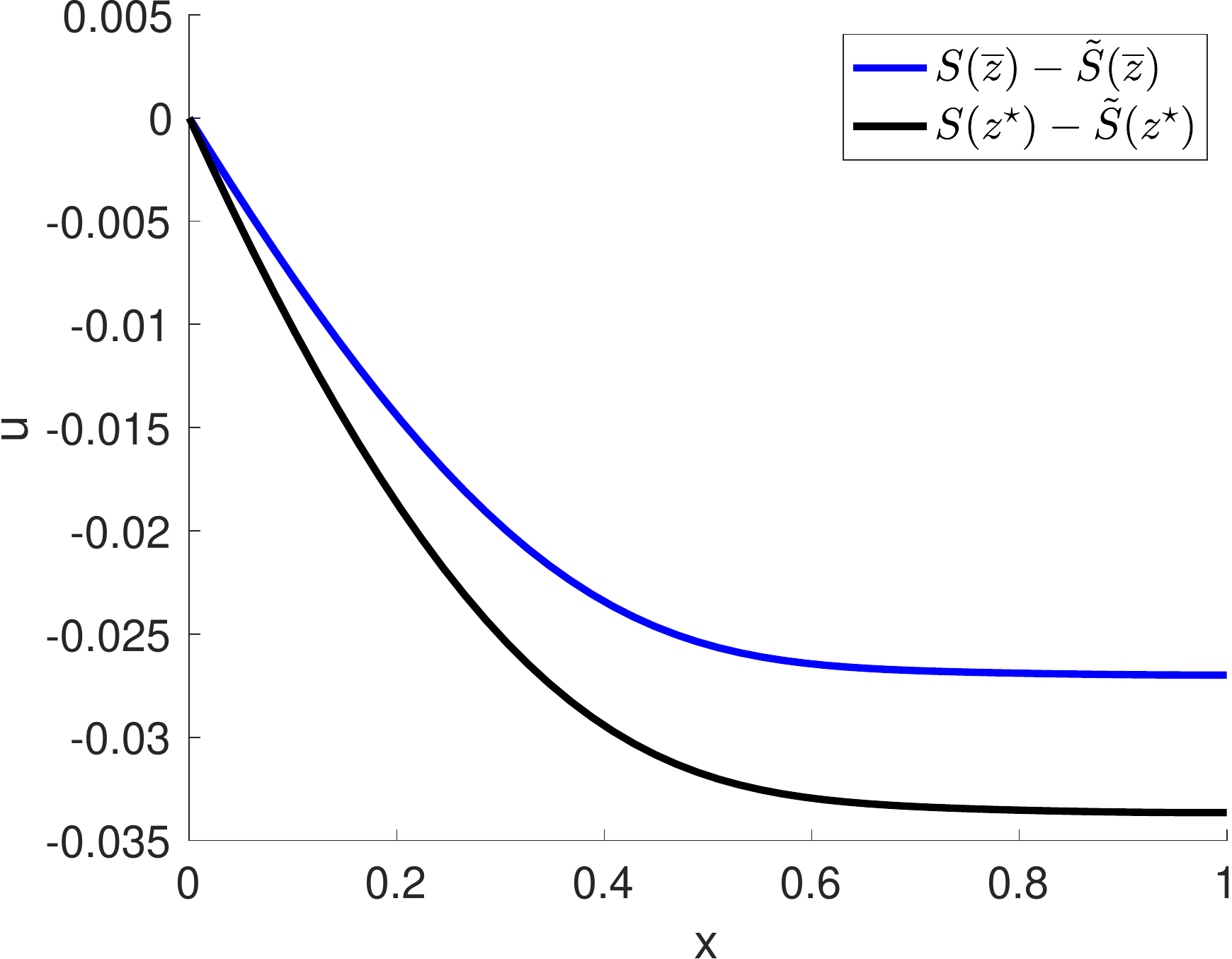}
  \hspace{1 mm}
    \includegraphics[width=0.35\textwidth]{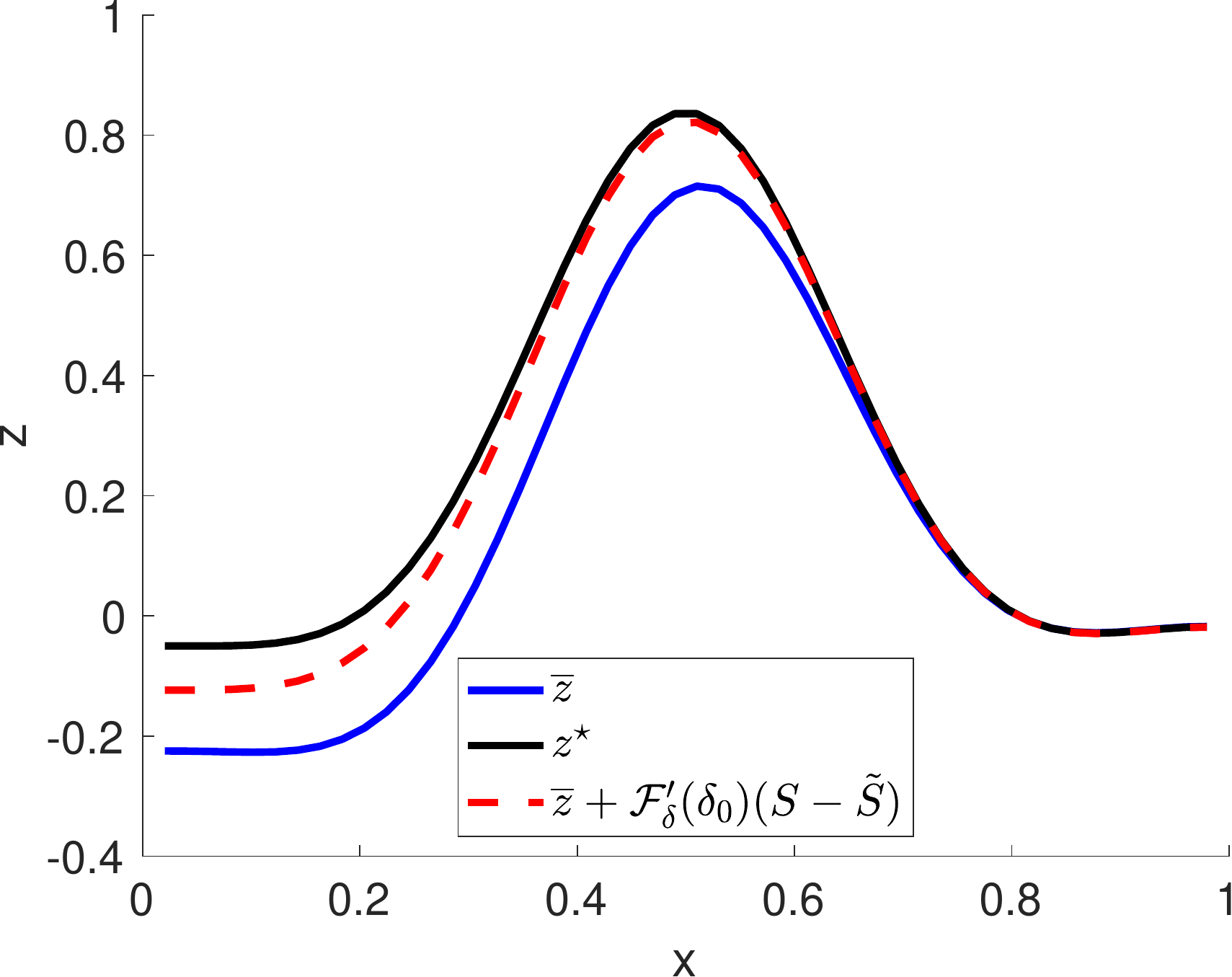}
  \caption{Left: the model discrepancy $S-\tilde{S}$ evaluated at $z^\star$ and $\overline{z}$; right: optimal solution $\overline{z}$ alongside the high-fidelity solution $z^\star$ and the sensitivity prediction $\overline{z} + \mathcal F_\delta'(\delta_0)(S-\tilde{S})$. }
  \label{fig:illustrative}
\end{figure}

\subsection{Convection-diffusion-reaction}
Consider the optimal forcing control constrained by the convection-diffusion-reaction PDE. Specifically, the optimization problem 
\begin{align*}
 & \min_{z} \frac{1}{2} \int_\Omega (\tilde{S}(z) - T)^2 +\frac{\beta_1}{2} \int_\Omega z^2+\frac{\beta_2}{2} \int_\Omega \vert \vert \nabla z \vert \vert_2^2 
\end{align*}
where $\tilde{S}(z)$ solves 
\begin{align*}
& -\nu \nabla^2 u + \v \cdot \nabla u = z + R(u) \quad &
\text{ in } \Omega \\
& u = 0 \quad & \text{on } \Gamma_d \\
& \nabla u \cdot \vec{n} = 0 \quad & \text{ on } \Gamma_n .
\end{align*}
The optimization seeks to achieve a target state $T$ constrained by a PDE which depends on the the velocity field $\v$ (depicted in the left panel of Figure~\ref{fig:cdr_opt_sol}) and reaction function $R$. Regularization by a squared Sobolev norm with coefficients $\beta_1$ and $\beta_2$ encourages a smooth optimal controller. The optimal state and controller are shown in the center and right panels of Figure~\ref{fig:cdr_opt_sol}. 

This model, which we have focused on in mathematical abstraction to illustrate properties of the proposed algorithms, is representative of many physical systems. A common source of model discrepancy is associated with the form of the diffusion $\nu$ which we have taken as a simple spatially homogenous isotropic model rather than a more complex model with spatial heterogeneity and anisotropic behavior. Similarly, the reaction function is an idealization of chemistry which is typically modeled by more complex relationships in high-fidelity models. Without knowledge of the high-fidelity system, we compute the leading singular values/vectors of the post-optimality sensitivity operator $\mathcal F_\delta'(\delta_0)$ to understand how the optimal controller will change given different forms of model discrepancy.

\begin{figure}[h]
\centering
 \includegraphics[width=0.23\textwidth]{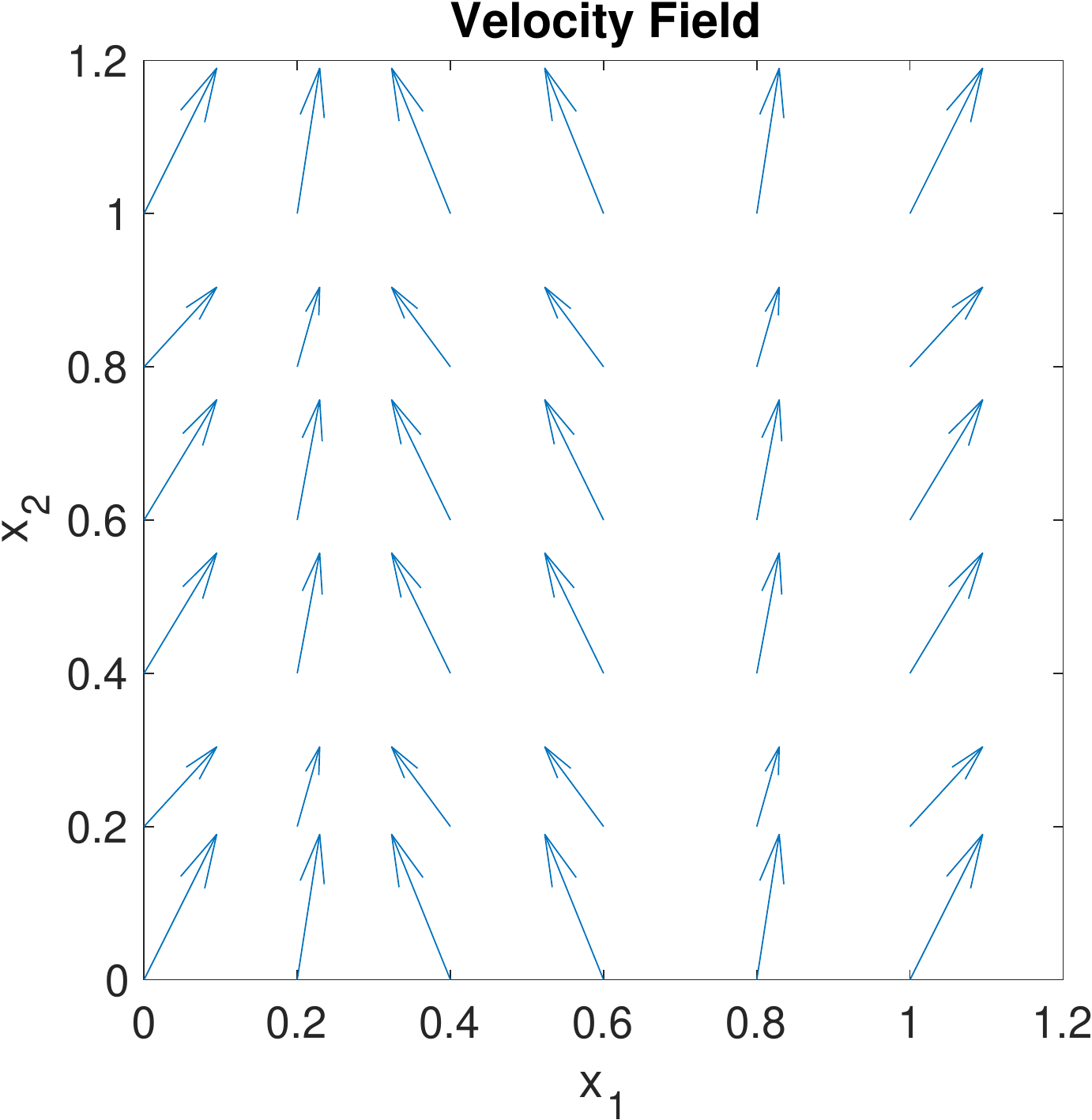}
  \includegraphics[width=0.32\textwidth]{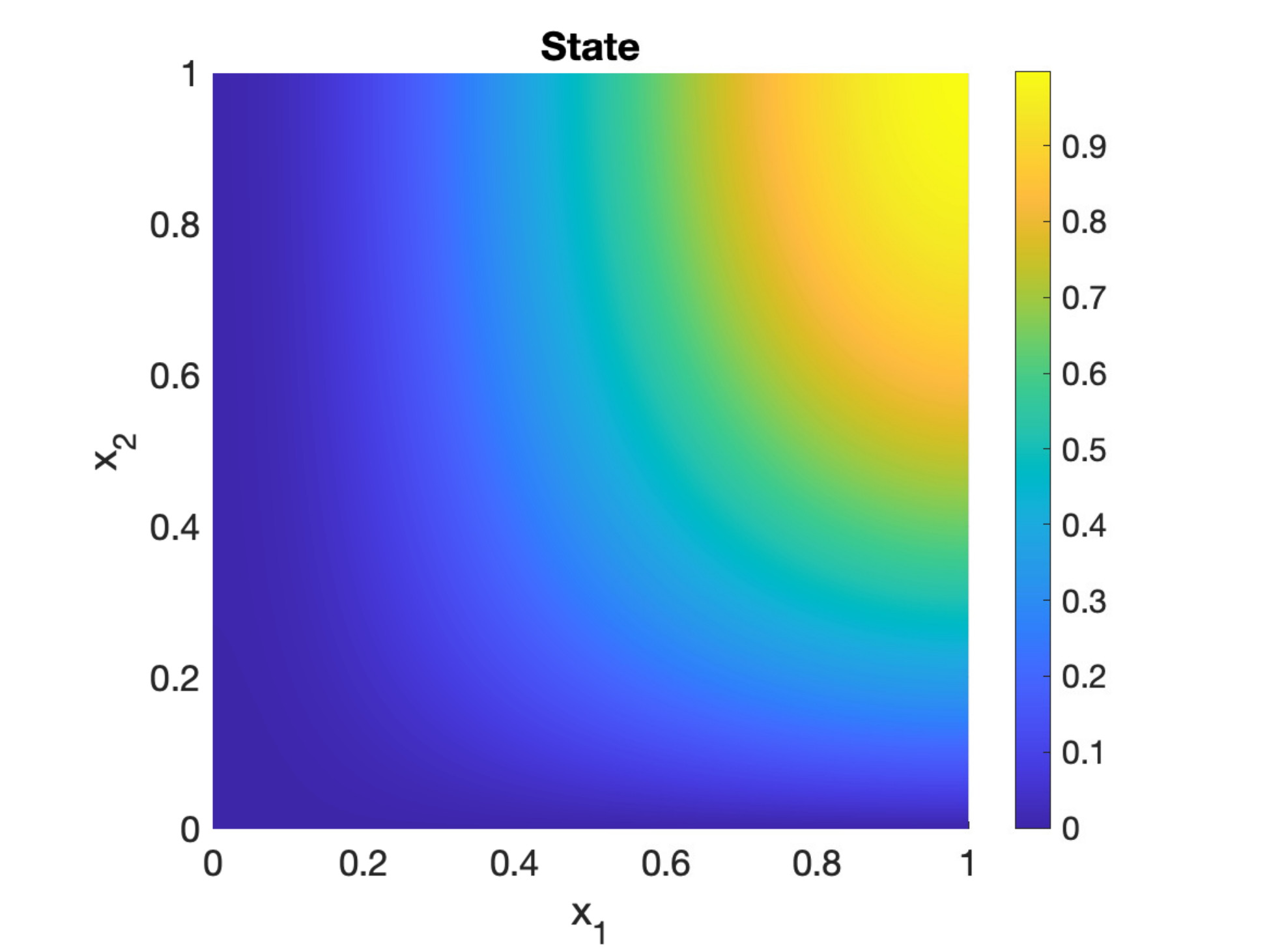}
    \includegraphics[width=0.32\textwidth]{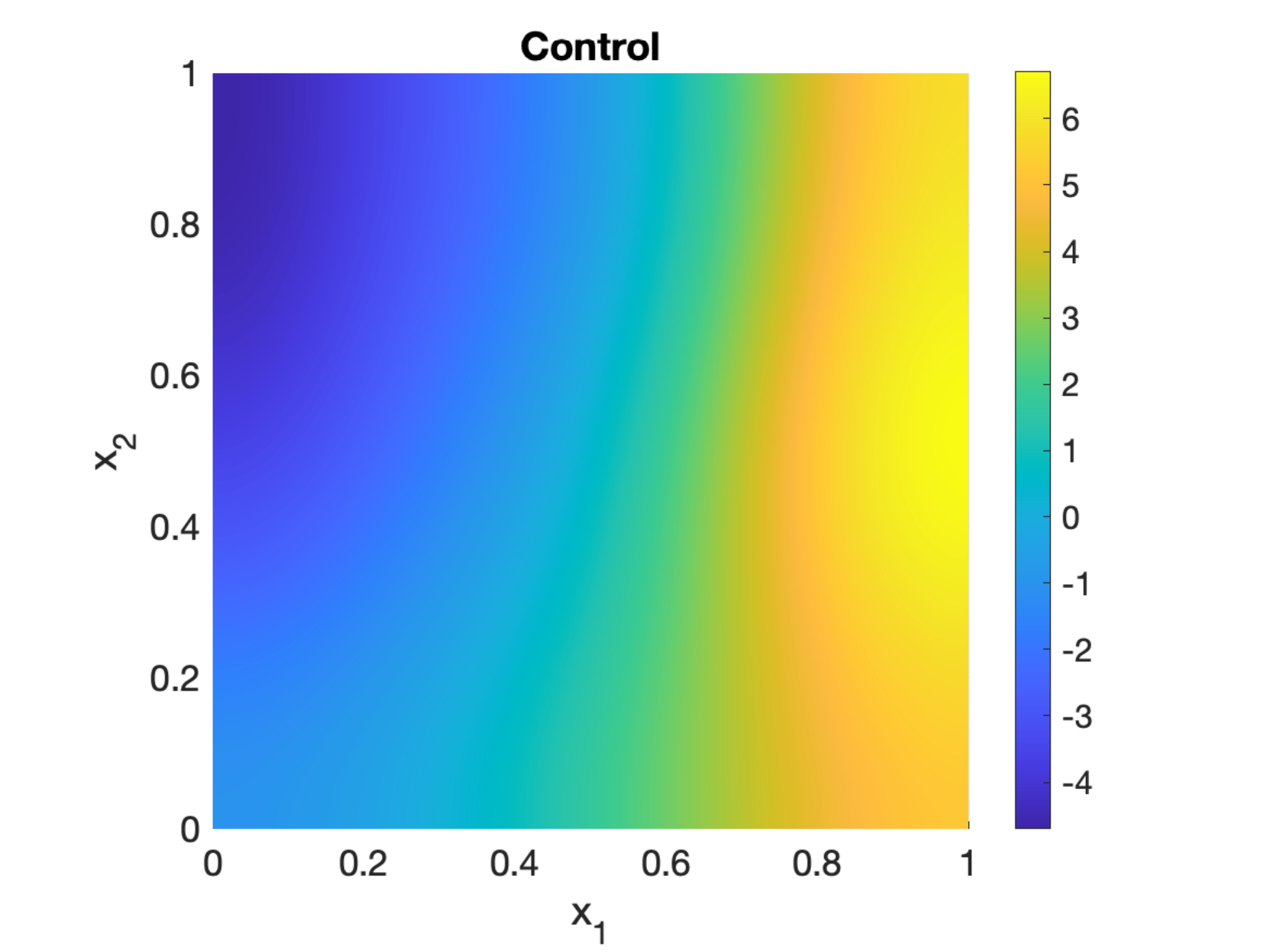}
  \caption{Optimal state solution (left) and optimal controller (right) for the convection-diffusion-reaction problem.}
  \label{fig:cdr_opt_sol}
\end{figure}

\subsection*{HDSA with respect to model discrepancy}

The state inner product weighting matrix $\L$ and controller covariance matrix $\vec{\Gamma}$ define $\M_\t$ and serve to impose physics constraints on the model discrepancy. Since the discrepancy is not constrained in any other way, the choice of inner product is important to identify plausible types of model discrepancy in the leading singular vectors of the post-optimality sensitivity operator. 

We impose that $\delta$ is smooth and that it respect the zero Dirichlet condition on $\Gamma_{d}$. To this end, define $\vec{K} \in \R^{m \times m}$ as
\begin{eqnarray*}
\vec{K}_{i,j} = \epsilon \int \nabla \phi_i \cdot \nabla \phi_j + \int \phi_i \phi_j,
\end{eqnarray*}
which induces the $H^1$ Sobolev norm, thus encouraging smoothness by penalizing the magnitude of the gradient. Our results use $\epsilon=0.001$. Additional numerical tests indicate that the results do not change significantly for other choices of $\epsilon$. The Dirichlet boundary condition is enforced via a soft penalty by defining $\L = \K + \tau \vec{P}^T \vec{P}$, where $\vec{P} \in \R^{m_b \times m}$ is the projector for the state variable coordinates onto the $\Gamma_d$ boundary coordinates. Setting the penalty coefficient $\tau=50$ was found to be sufficient to ensure that the resulting $\delta$'s respect the boundary condition. 

Recall that $\vec{\Gamma}$ is the covariance matrix for the coordinates discretizing $z \in L^2(\Omega)$. We define $\vec{\Gamma}$ by discretizing the covariance operator of a Gaussian random field on $\Omega$ and leverage the fact that the inverse of the elliptic operator $\mathcal A = \frac{1}{\alpha^2}(-\beta \Delta + \mathcal I)$, where $\Delta$ is the Laplacian and $\mathcal I$ is the identity operator, is a self-adjoint positive definite trace class operator for all $\beta>0$, and hence defines a valid covariance operator. We take $\beta=10^{-6}$ and $\alpha=1$ for the subsequent results. Numerical experiments (omitted for conciseness) indicate that taking $\alpha$ too small will result in model discrepancies which are small at $\overline{\z}$ but much larger for perturbed sources. Taking larger values of $\alpha$ will favor discrepancies $\d$ which are nearly constant functions of $\z$.

Algorithm~\ref{alg:GSVD} was executed with a target rank of $k=54$, oversampling factor $\ell=8$, and $q=1$ subspace iterations. The resulting singular values are displayed in Figure~\ref{fig:cdr_sing_vals}. We observe around one order of magnitude decrease in the singular values, which, in the context of low rank approximations is insufficient, but for our sensitivity analysis purposes is adequate for identifying greatest sources of uncertainty contributed by model discrepancy.

In this example, $m=n=10^4$ and hence the dimension of $\t \in \R^p$ is $p>10^8$. However, the leading singular pairs of sensitivity operator $\nabla_{\t} \F(\vec{0}) \in \R^{n \times p}$ are well approximated with a computational cost of $\mathcal O(100)$ Hessian inversions, many of which are executed asynchronistically. This highlights the computational advantages of our proposed approach. 
 
\begin{figure}[h]
\centering
  \includegraphics[width=0.3\textwidth]{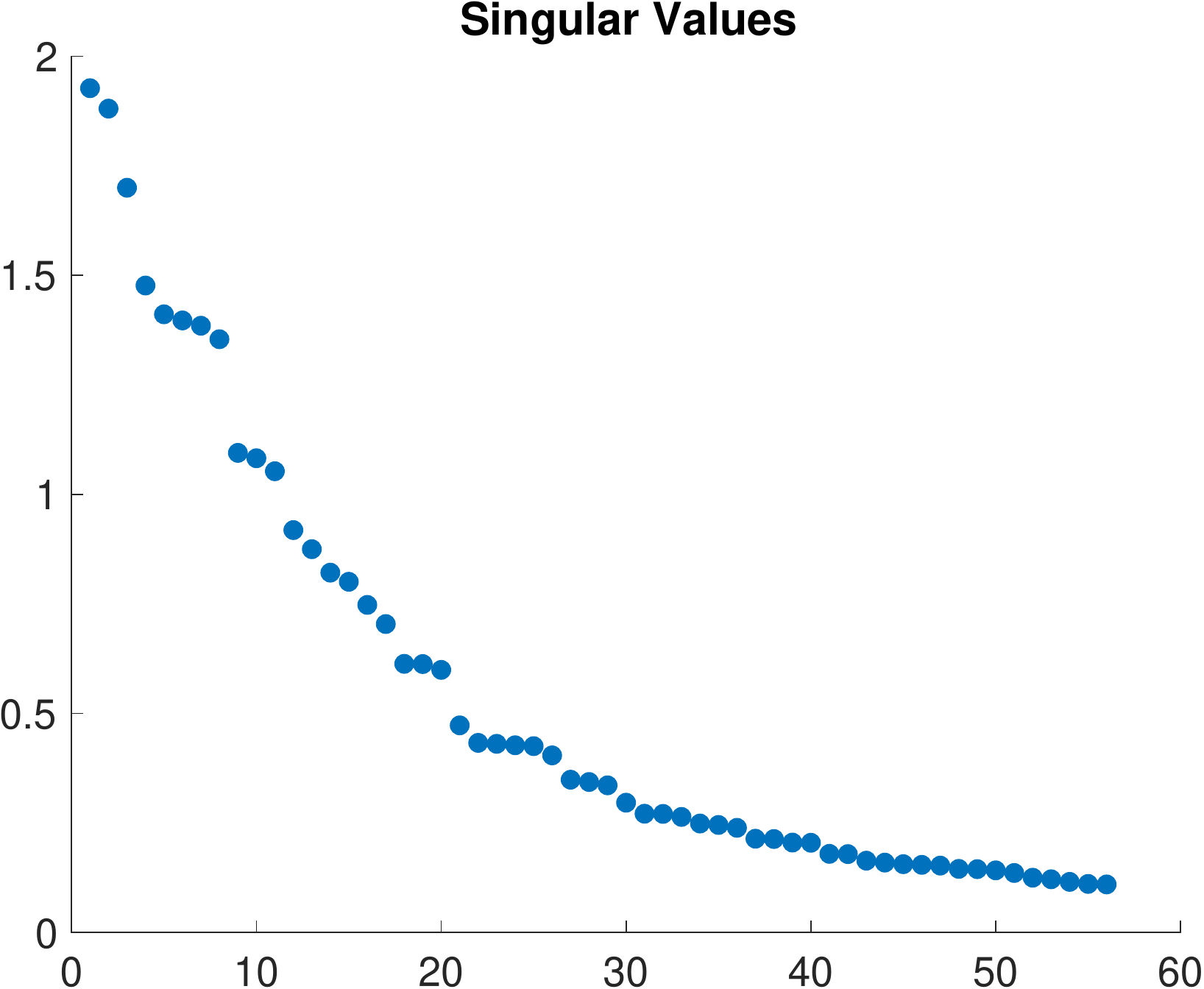}
  \caption{Singular values for the convection-diffusion-reaction problem.}
  \label{fig:cdr_sing_vals}
\end{figure}

Using~\eqref{eqn:eval_delta}, Figure~\ref{fig:cdr_sing_vectors} shows the model discrepancies and the corresponding perturbations of the optimal solutions for the two largest singular values. Comparing $\d(\overline{\z},\t_N)$ in the left column with $\d(\overline{\z}+\sigma_N \w_N,\t_N)$ in the center column highlights the dependence of the model discrepancy on $\z$. We observe that the leading model discrepancies at $\z=\overline{\z}$ are most prominent in the lower region of the domain and propagate upward when evaluated at the perturbed solution $\z=\overline{\z}+\sigma_N \w_N$. The corresponding controller perturbations are concentrated in the lower region of the domain as the velocity field disperses the forcing term upward.

\begin{figure}
  \centering
  \begin{tabular}{M{.025\textwidth}M{.25\textwidth}M{.25\textwidth}M{.25\textwidth}}
$N$ & $\delta(\overline{z},\theta_N)$ & $\delta(\overline{z}+\sigma_N w_N,\theta_N)$ & $\sigma_N w_N$ \\
 1  & 
  \includegraphics[width=0.25\textwidth]{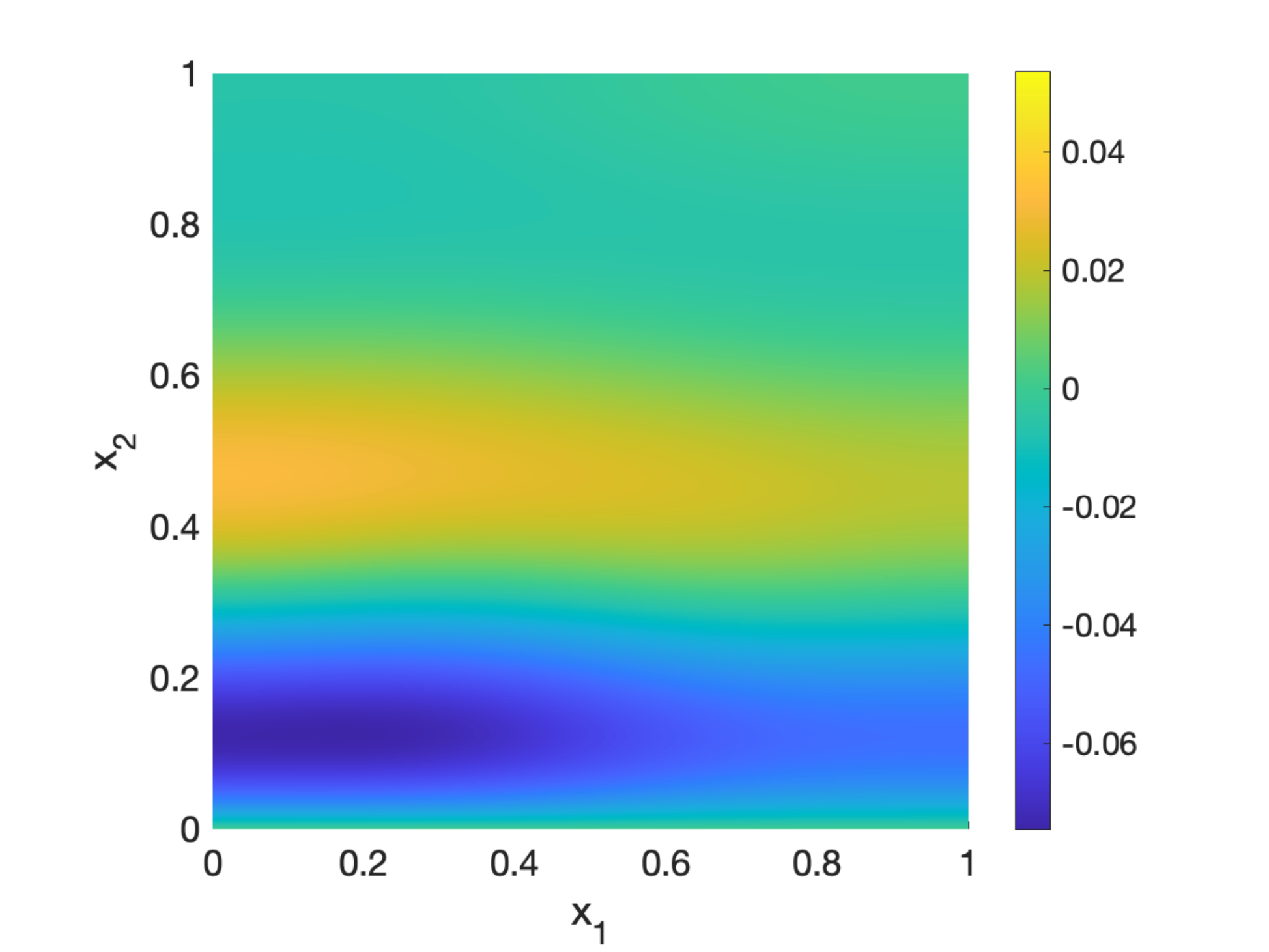} &
    \includegraphics[width=0.25\textwidth]{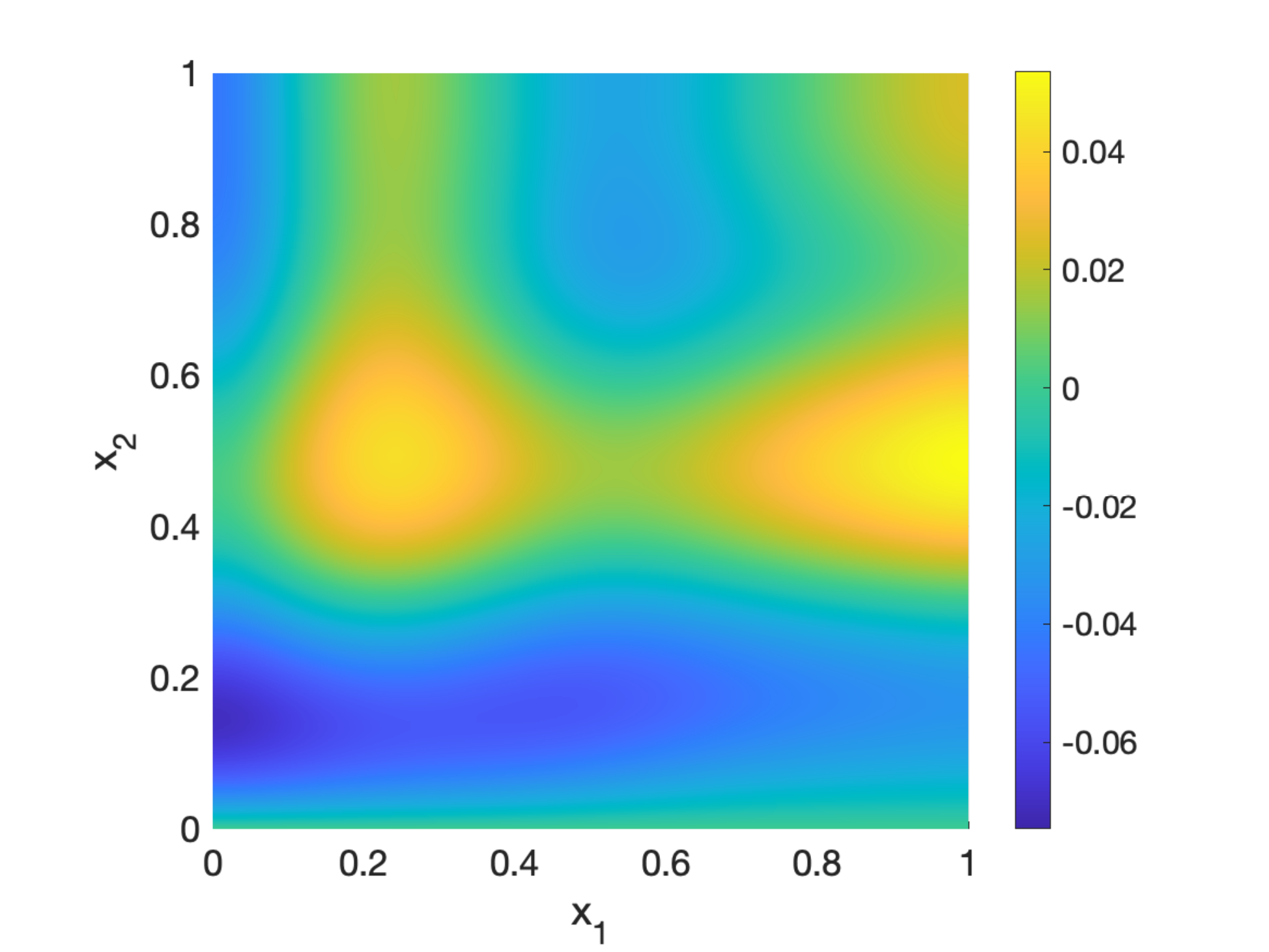} &
        \includegraphics[width=0.25\textwidth]{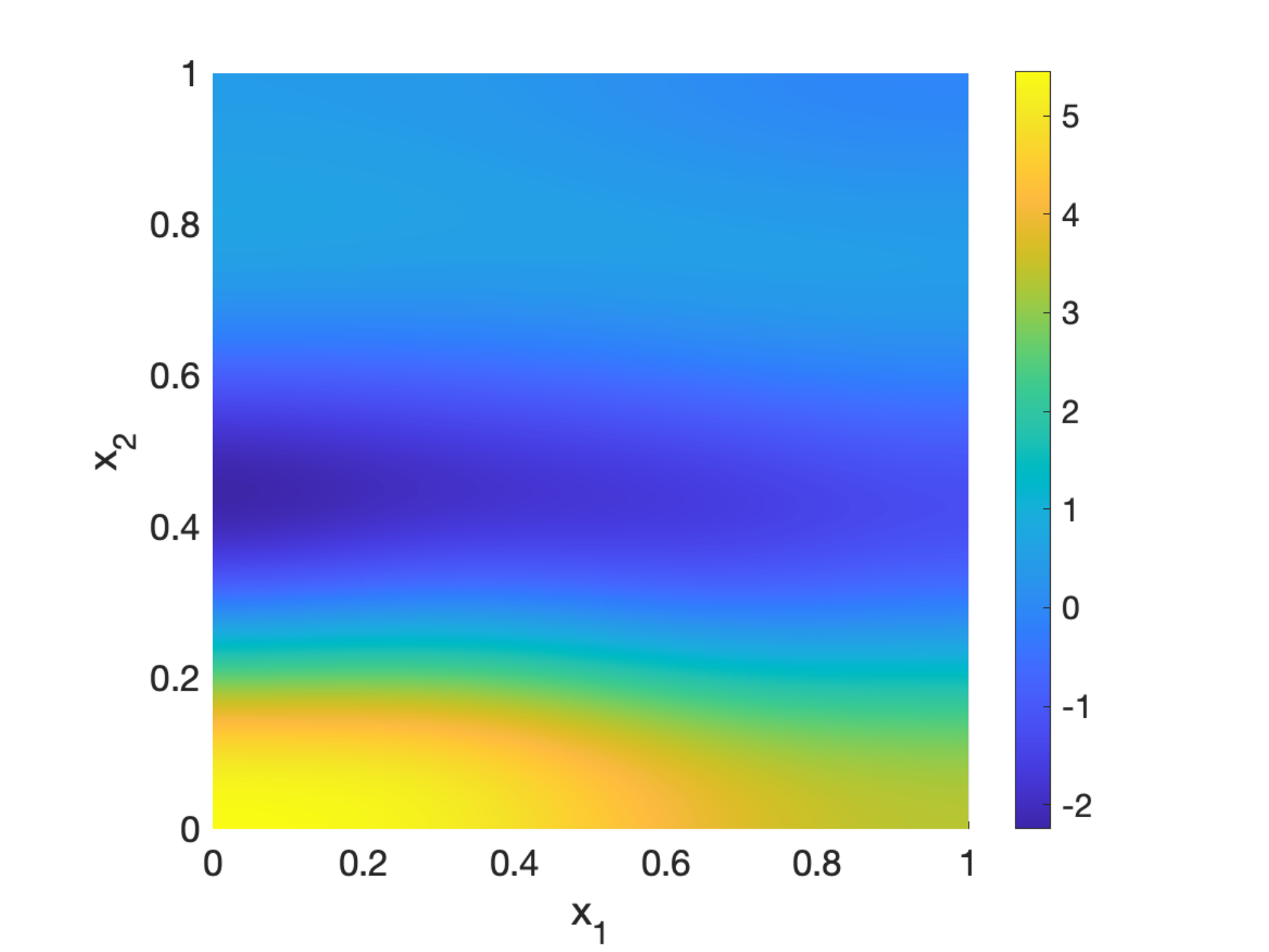} \\
         2&
          \includegraphics[width=0.25\textwidth]{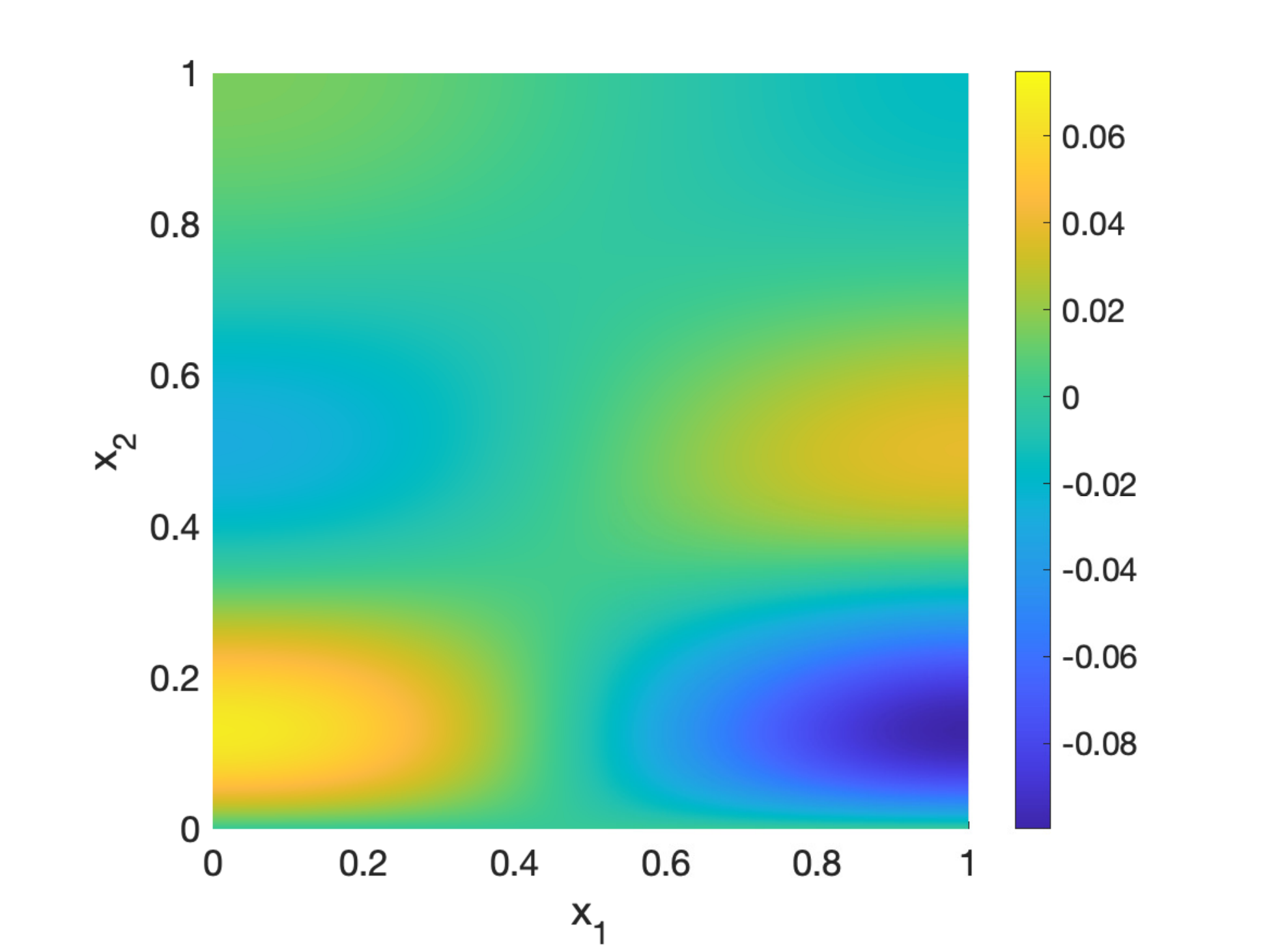} &
    \includegraphics[width=0.25\textwidth]{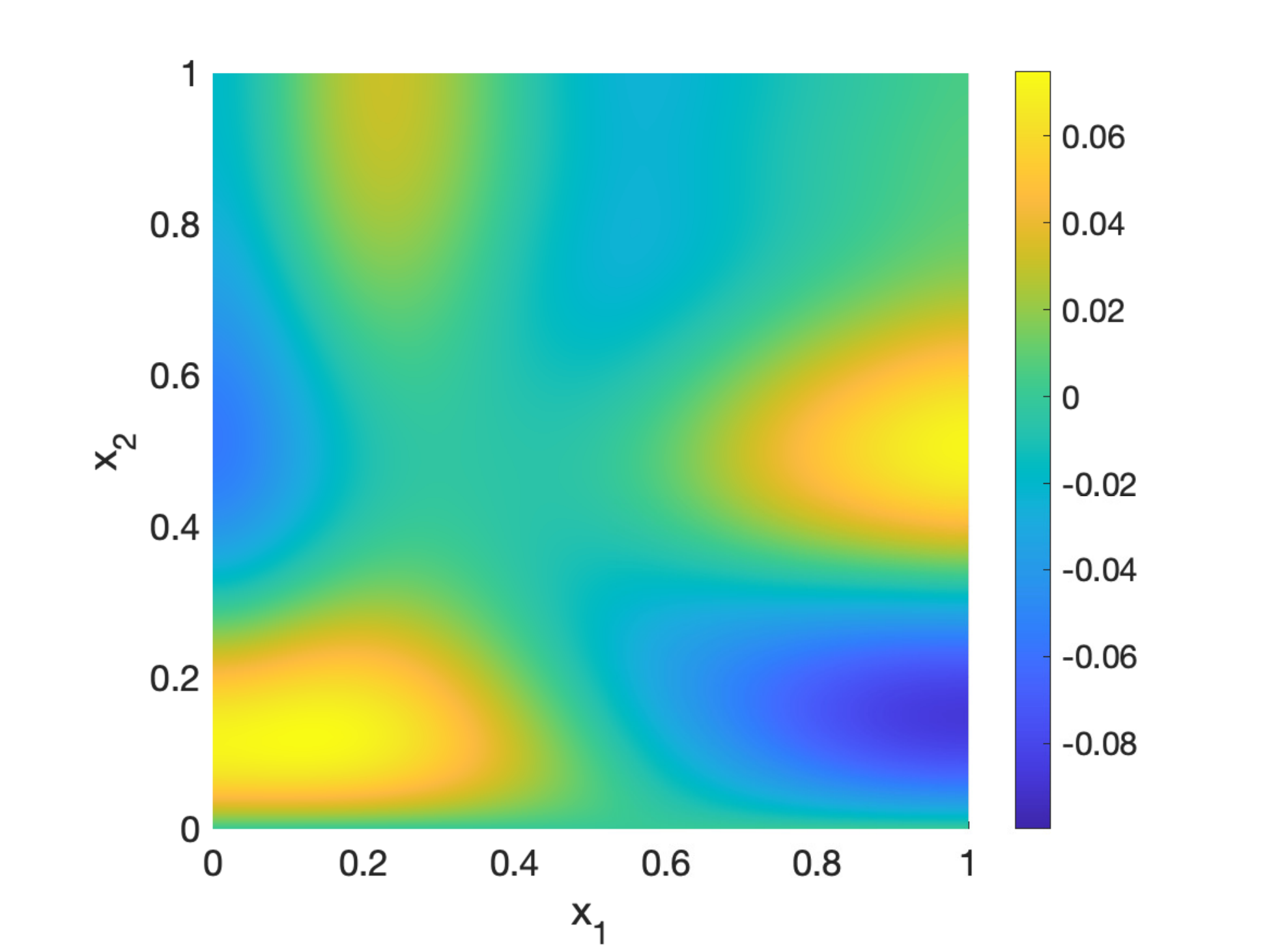} &
        \includegraphics[width=0.25\textwidth]{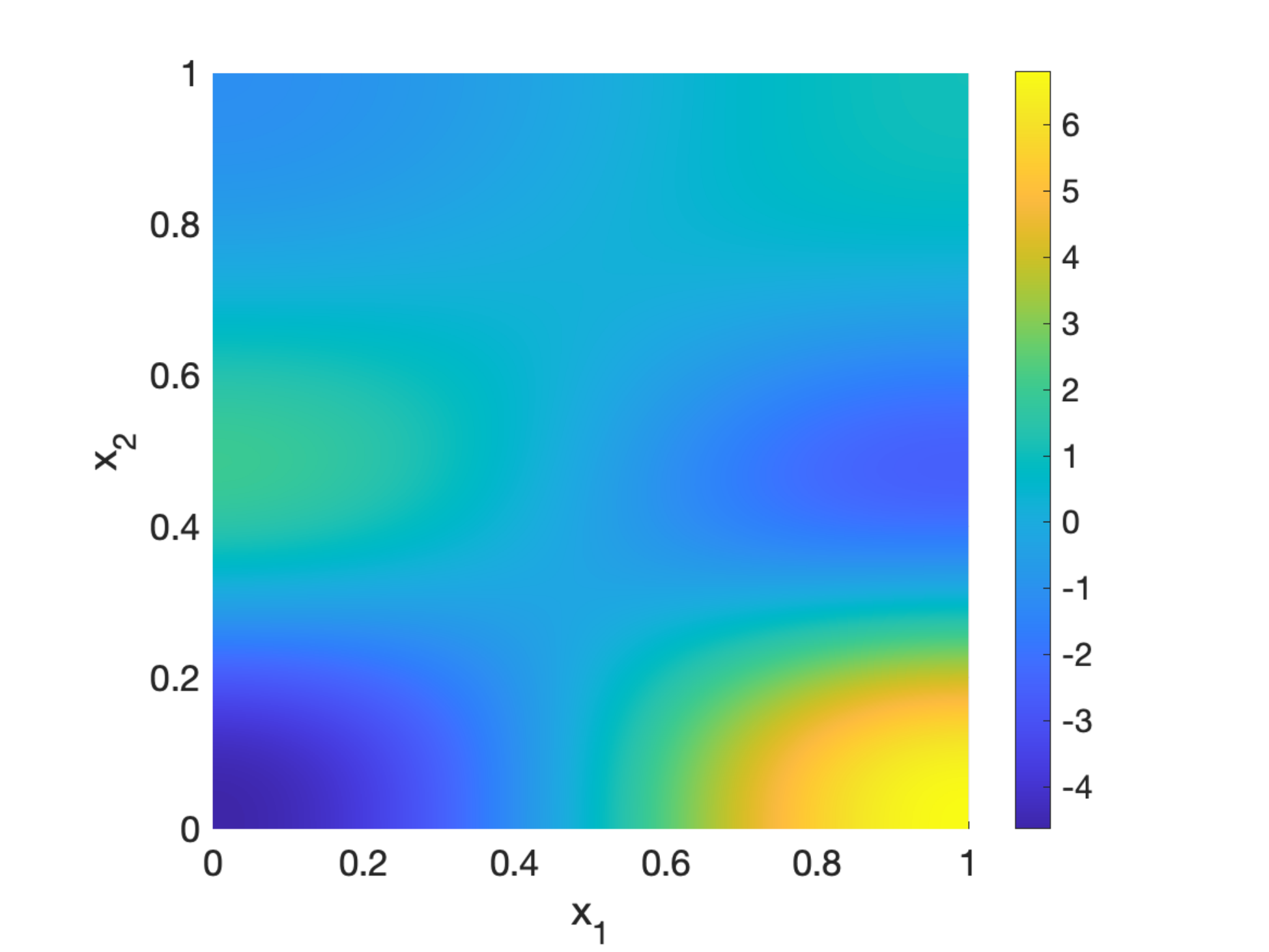} \\
  \end{tabular}
    \caption{Leading model discrepancies, $\d(\z,\t_N)$, and the corresponding perturbations of the optimal solution, $\sigma_N \w_N$, for the convection-diffusion-reaction problem. From top to bottom, each row corresponds to a singular vector, $\t_N$, $N=1,2$. Across each row, the left panel is $\d(\overline{\z},\t_N)$, the center is $\d(\overline{\z}+\sigma_N \w_N)$, and the right is $\sigma_N \w_N$.}
  \label{fig:cdr_sing_vectors}
\end{figure}

\subsection{Thermal-fluid}
In this subsection we consider control of the nonlinear multi-physics system modeled by the Boussinesq flow equations, an emulation of a chemical vapor deposition reactor. Reactant gases are injected in the top of a reactor and flow downwards to create an epitaxial film on the bottom. Vorticity created by buoyancy-driven convection inhibit some gases from reaching the bottom of the reactor. We control thermal fluxes on the side walls of the reactor to minimize the vorticity. Consider the optimization problem,

\begin{align*} 
& \min\limits_{z} \frac{1}{2} \int_{\Omega} (\nabla \times \tilde{v}(z))^2 + \frac{\gamma}{2} \int_{\Gamma_c} z^2  
\end{align*}
where $\tilde{S}(z)=(\tilde{v}(z),\tilde{p}(z),\tilde{T}(z))$ solves
\begin{align*} 
& -\frac{1}{Re} \nabla^2 v + (v \cdot \nabla ) v + \nabla p + \eta T g = 0 & \text{ in } \Omega  \nonumber \\
& \nabla \cdot v = 0 & \text{ in } \Omega  \nonumber \\
& - \kappa \Delta T + v \cdot \nabla T = 0 & \text{ in } \Omega  \nonumber \\
&T = 0  \qquad \text{and} \qquad v = v_i & \text{on } \Gamma_i  \nonumber \\
& \kappa \nabla T \cdot \vec{n} = 0 \qquad \text{and} \qquad v=v_o & \text{on } \Gamma_o  \nonumber \\
& T = 1 \qquad \text{and} \qquad v=0 & \text{on } \Gamma_b  \nonumber \\
&  \kappa \nabla T \cdot \vec{n} = T-z \qquad \text{and} \qquad v=0 & \text{on } \Gamma_c . \nonumber 
\end{align*}
The left panel of Figure~\ref{fig:tf_velocity_field} depicts the domain and boundaries. The state consists of horizontal ($x_1$) and vertical ($x_2$) velocities which we denote as $v=(v_1,v_2)$, the pressure $p$, and the temperature $T$. The controller $z$ is a function defined on the left and right boundaries. Figure~\ref{fig:tf_velocity_field} displays the uncontrolled (center) and controlled (right) velocity fields. The undesired vorticity is observed in the uncontrolled velocity field and are reduced by the control strategy. The optimal states are displayed in Figure~\ref{fig:tf_states} and the corresponding optimal controllers are shown in Figure~\ref{fig:tf_control}.

\begin{figure}
\centering
\includegraphics[width=0.35\textwidth]{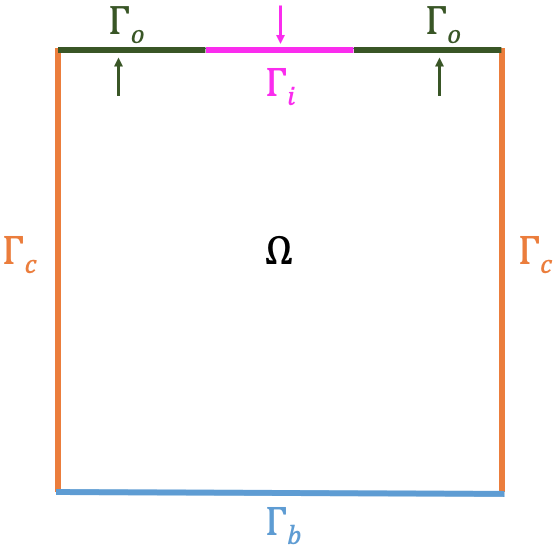}
 \includegraphics[width=0.31\textwidth]{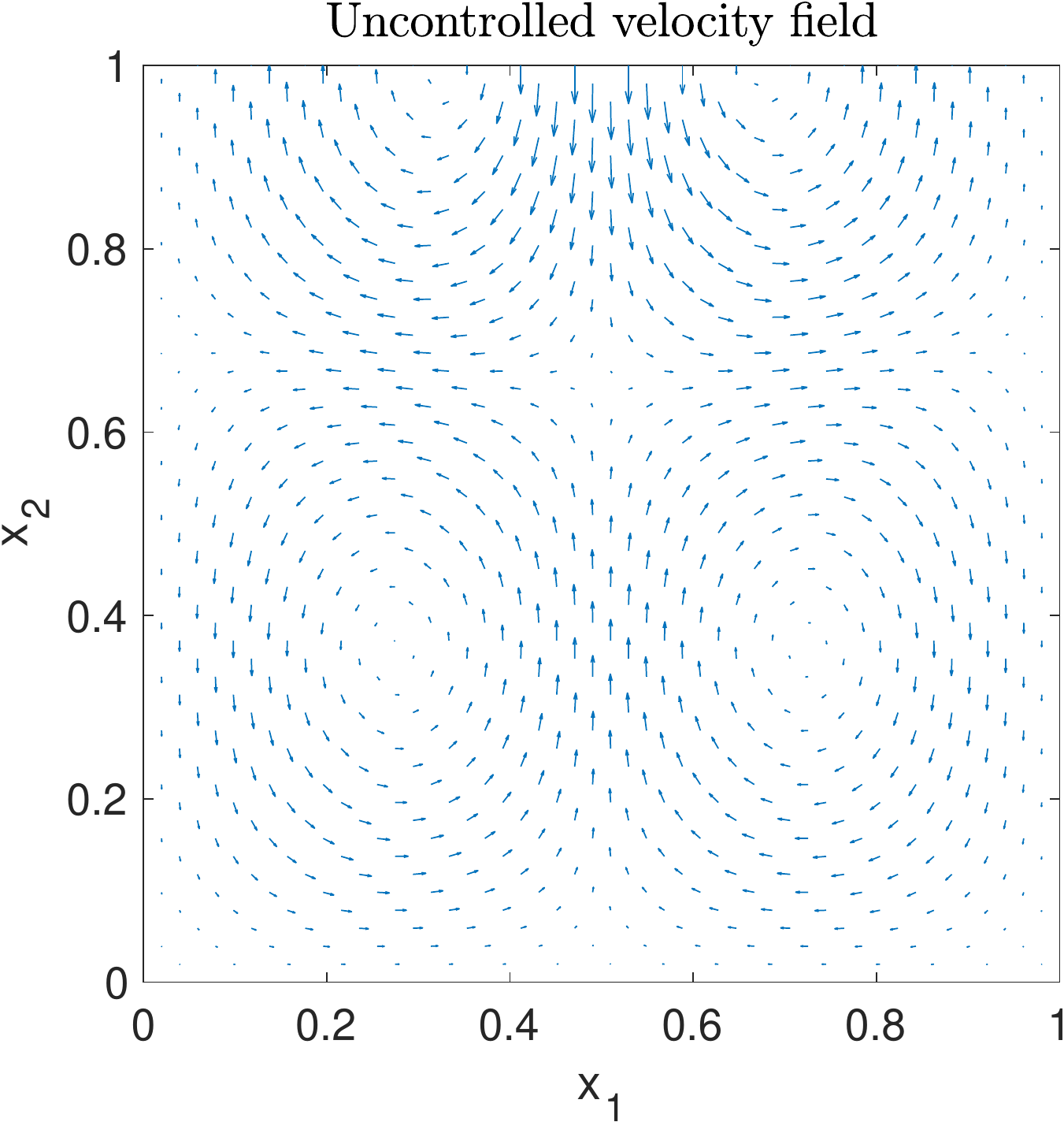} 
 \includegraphics[width=0.31\textwidth]{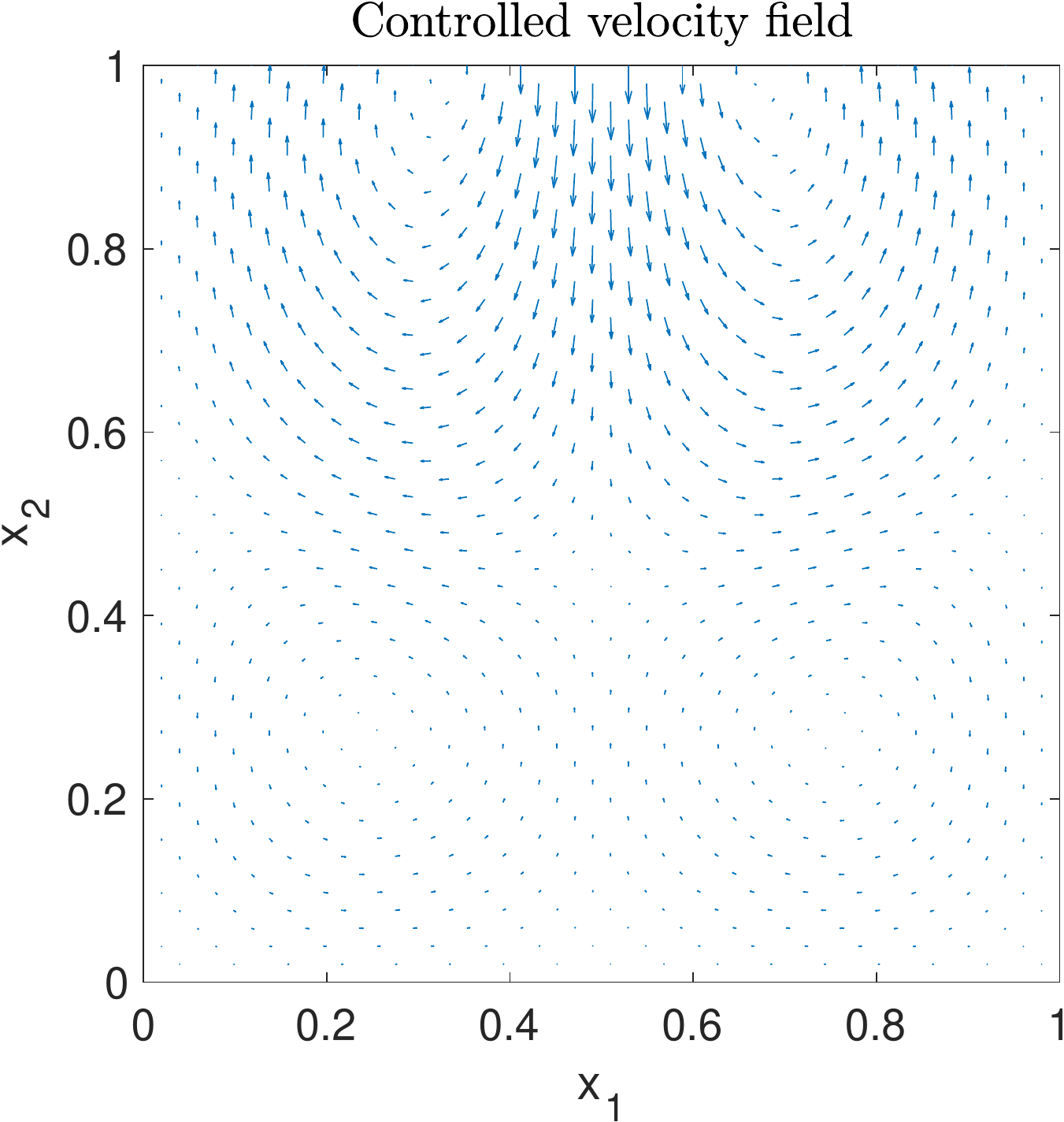}
\caption{Left: domain and boundaries; center and right: uncontrolled and controlled velocity fields, respectively, for the thermal-fluid problem.}
\label{fig:tf_velocity_field}
\end{figure}
	
\begin{figure}[h]
\centering
  \includegraphics[width=0.24\textwidth]{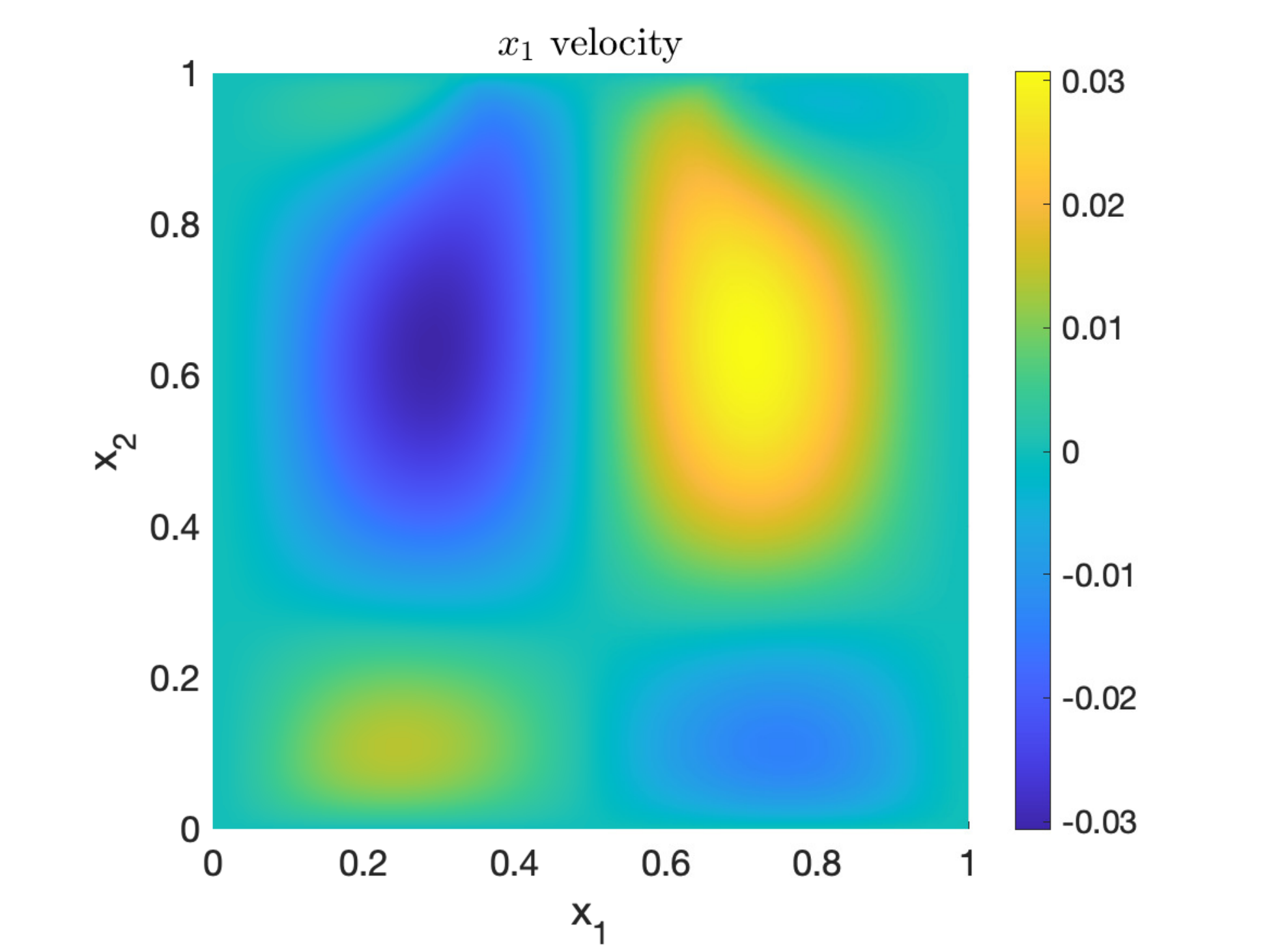}
    \includegraphics[width=0.24\textwidth]{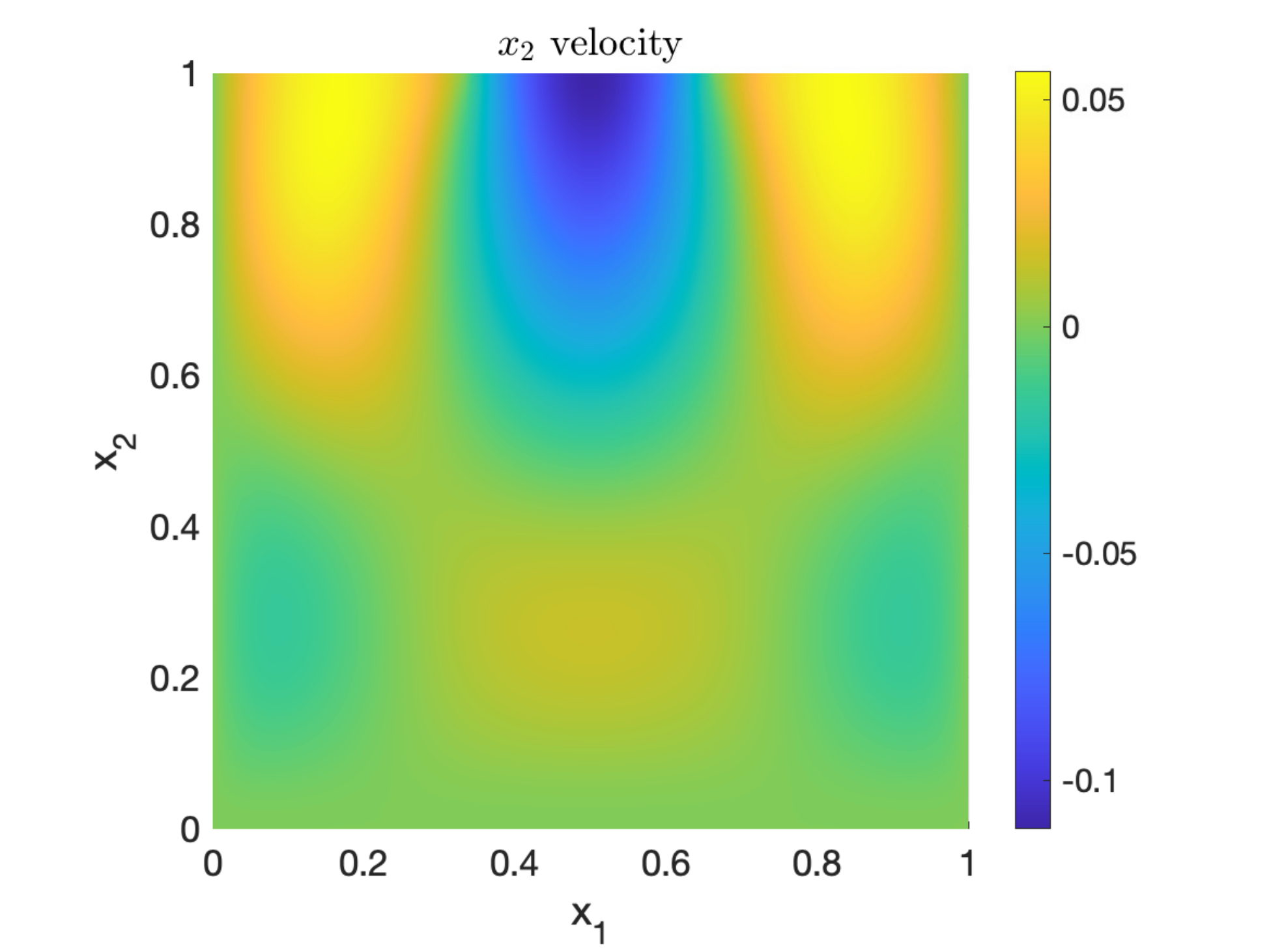}
      \includegraphics[width=0.24\textwidth]{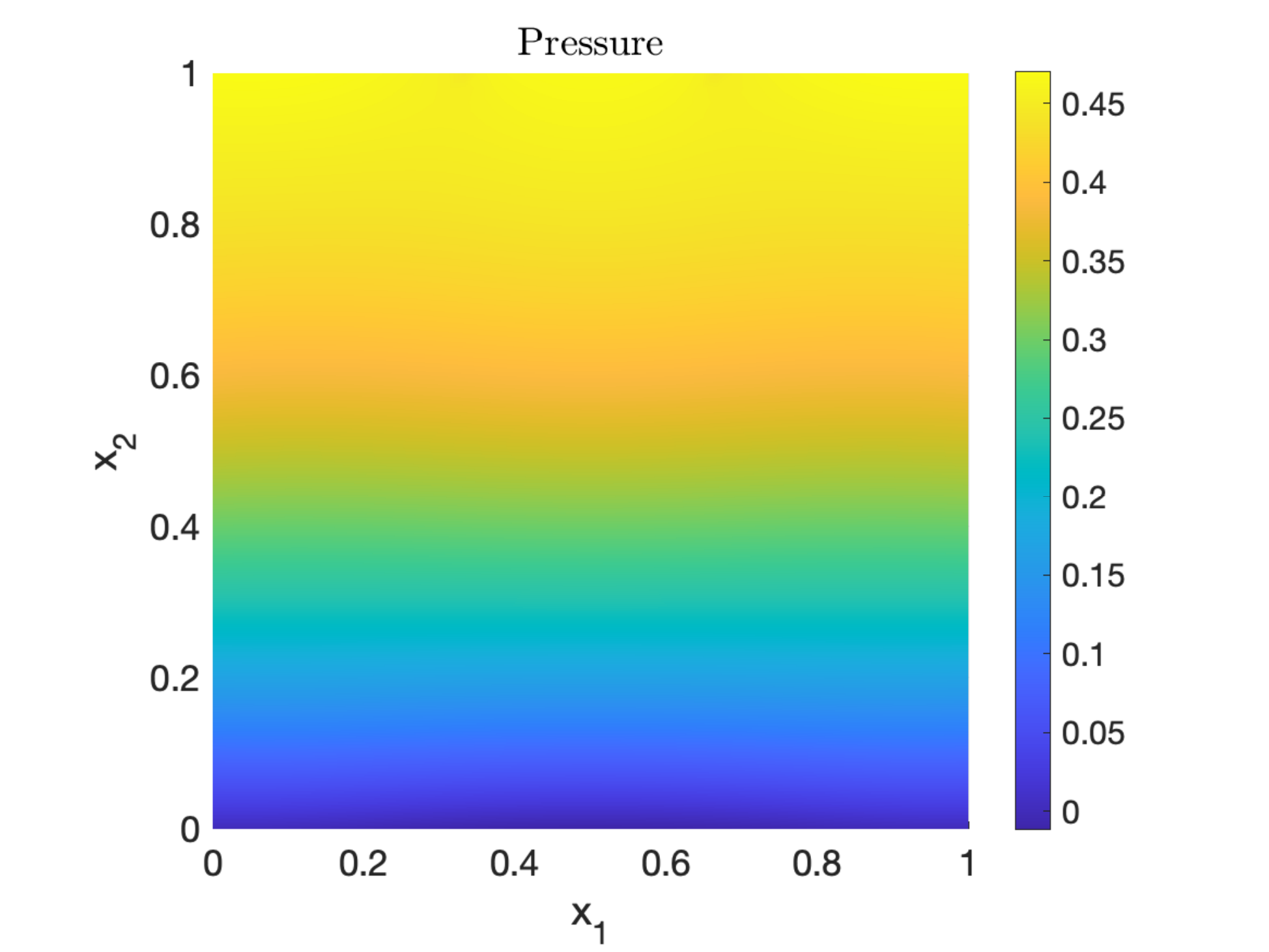}
        \includegraphics[width=0.24\textwidth]{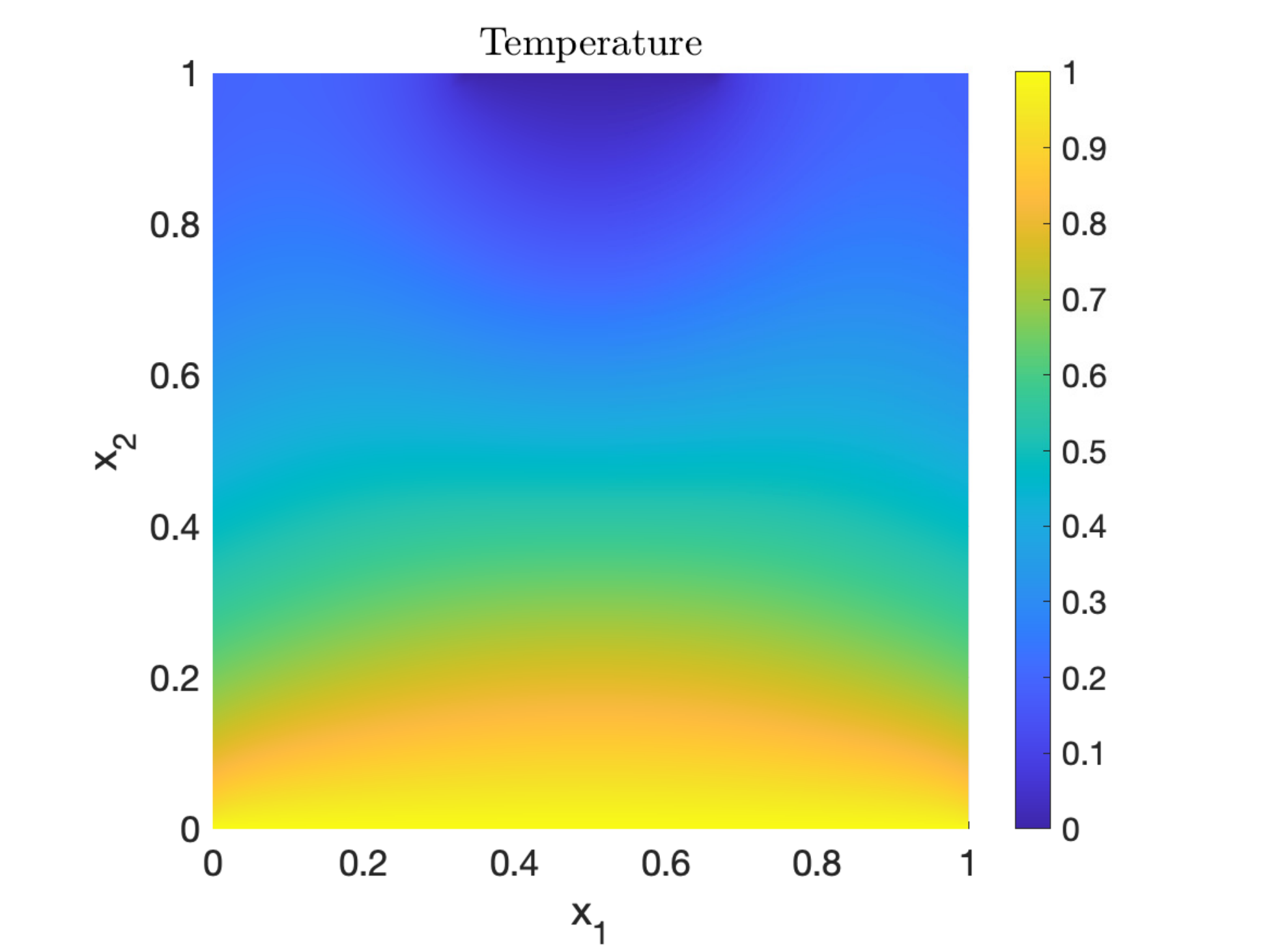}
  \caption{Optimal states solution for the thermal-fluid problem.}
  \label{fig:tf_states}
\end{figure}

\begin{figure}[h]
\centering
  \includegraphics[width=0.3\textwidth]{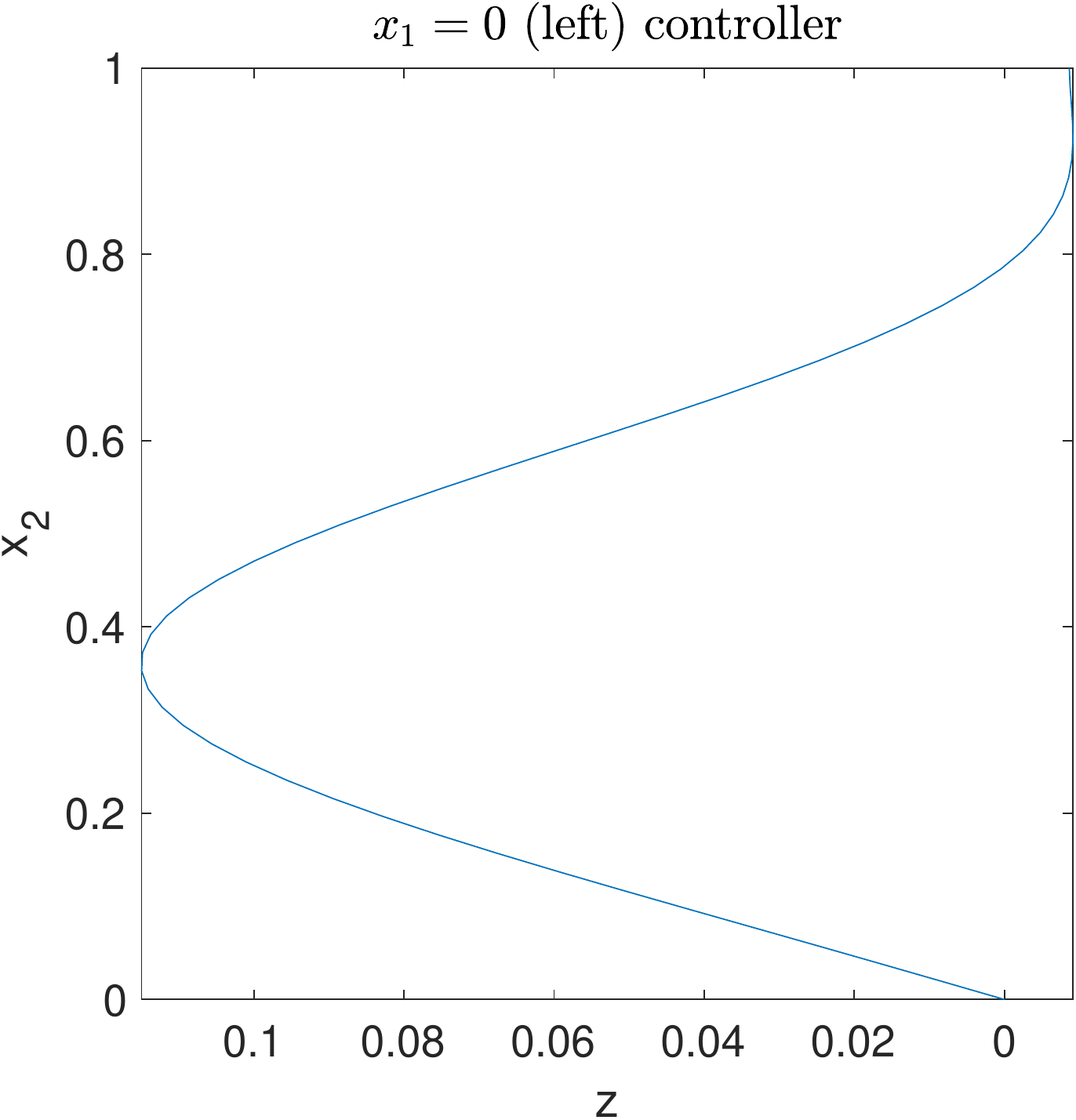}
    \includegraphics[width=0.3\textwidth]{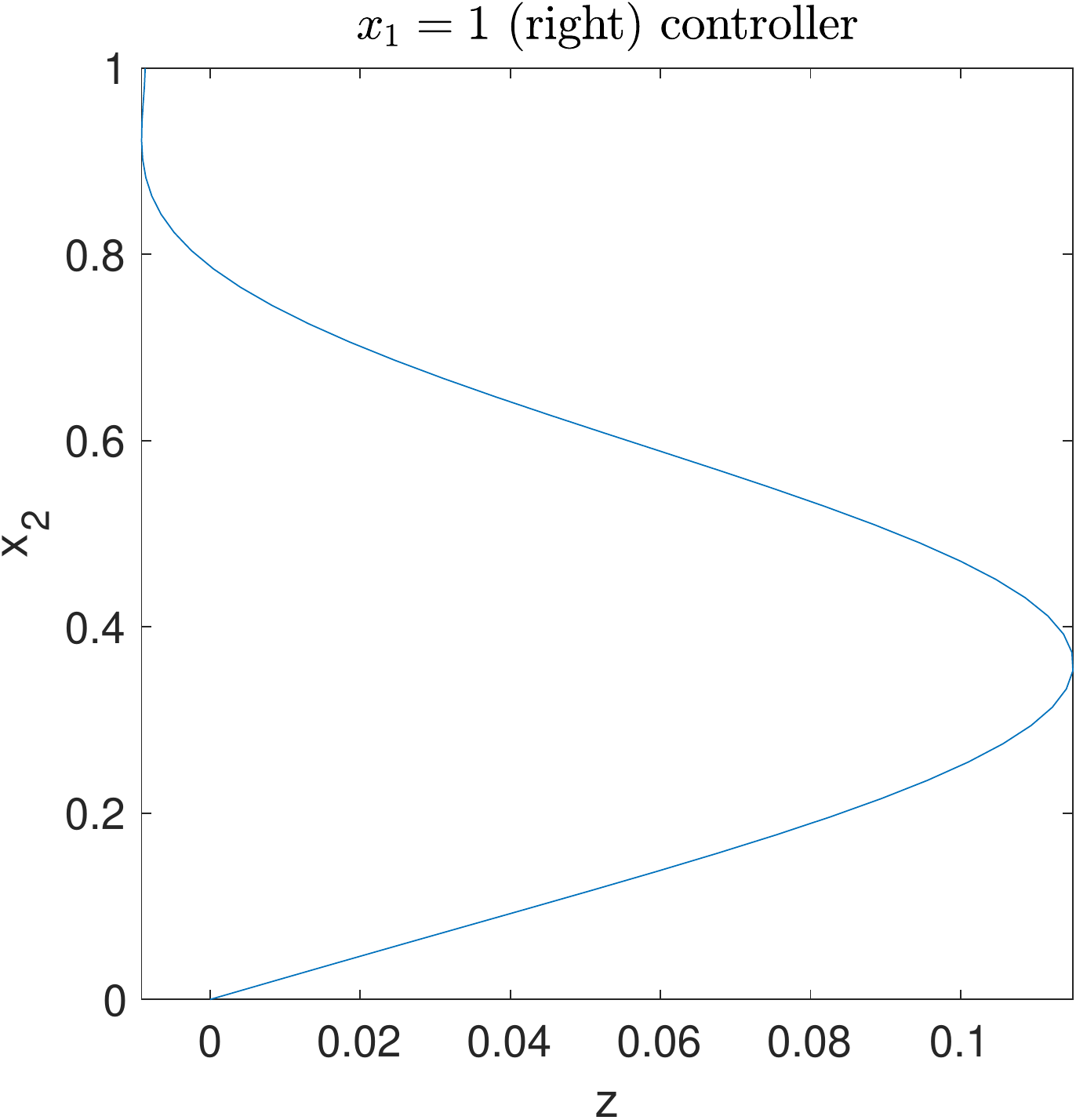}
  \caption{Optimal control solution for the thermal-fluid problem.}
  \label{fig:tf_control}
\end{figure}

\subsubsection*{HDSA with respect to model discrepancy}
It is common that model discrepancies arise from simplifying assumptions made to produce more management fluid flow simulations. Since the objective function only depends on the velocity field, errors in the pressure or thermal states will be implicitly represented in the velocity states. To facilitate coupling of the states we define  the state inner product weighting matrix $\L = \nabla_{\u} \tilde{\c}^T \vec{W}^T \M_u^{-1} \vec{W} \nabla_{\u} \tilde{\c}$, where $\nabla_{\u} \tilde{\c}$ is the Jacobian of the PDE residual evaluated at the optimal state, $\W$ is a weighting matrix to prescribe tolerances in deviation for each individual equation, and the inverse of the mass matrix, $\M_u^{-1}$, ensures mesh independence by weighting the inner product appropriately. We define $\vec{W}$ to weight the divergence equation by $10^2$ and the other equations by $1$. This soft penalty ensures that the model discrepancies for the velocity field will be divergence free up to a small tolerance. By penalizing $\delta$'s deviation from the original PDE system, we will identify how errors in the pressure and temperature propagate to errors in the velocity field. We define $\vec{\Gamma}$ as the inverse of an elliptic operator as in the convection-diffusion-reaction problem, with $\alpha=6$ and $\beta=10^{-6}$.

Figure~\ref{fig:tf_sing_vectors} displays $\sigma_N$, $\d(\overline{\z},\t_N)$ and $\sigma_N \w_N$ for the six leading singular vectors, $N=1,2,\dots,6$. We observe symmetries, anti-symmetries, and a progression in sinusoidal frequencies across the modes. In particular, the $\d$'s correspond to model discrepancy which creates additional vorticity and the different modes correspond to different orientations and frequencies of the vorticity. We observe symmetries in the left and right controller perturbations which either work together (moving heat in the same direction) or against one another (moving heat to or from the interior of the domain) as they correspond to the updated control strategy seeking to mitigate the vorticity generated by the $\d$'s. This gives valuable insights such as the realization that lower frequency discrepancies will have a greater influence on the optimal controller. In practice, information such as the frequency of the model discrepancy may be known from physical principles and/or experiments even if the form of the high-fidelity model is unknown. 

\begin{figure}
  \centering
  \begin{tabular}{M{.07\textwidth}M{.115\textwidth}M{.115\textwidth}M{.115\textwidth}M{.115\textwidth}M{.115\textwidth}M{.115\textwidth}}
$\sigma_N$ & $\delta_{v_1}(\overline{z},\theta_N)$ & $\delta_{v_2}(\overline{z},\theta_N)$ & $\delta_{p}(\overline{z},\theta_N)$ & $\delta_T(\overline{z},\theta_N)$ & Left $\sigma_N w_N$ & Right $\sigma_N w_N$ \\
 .0522  & 
  \includegraphics[width=0.115\textwidth]{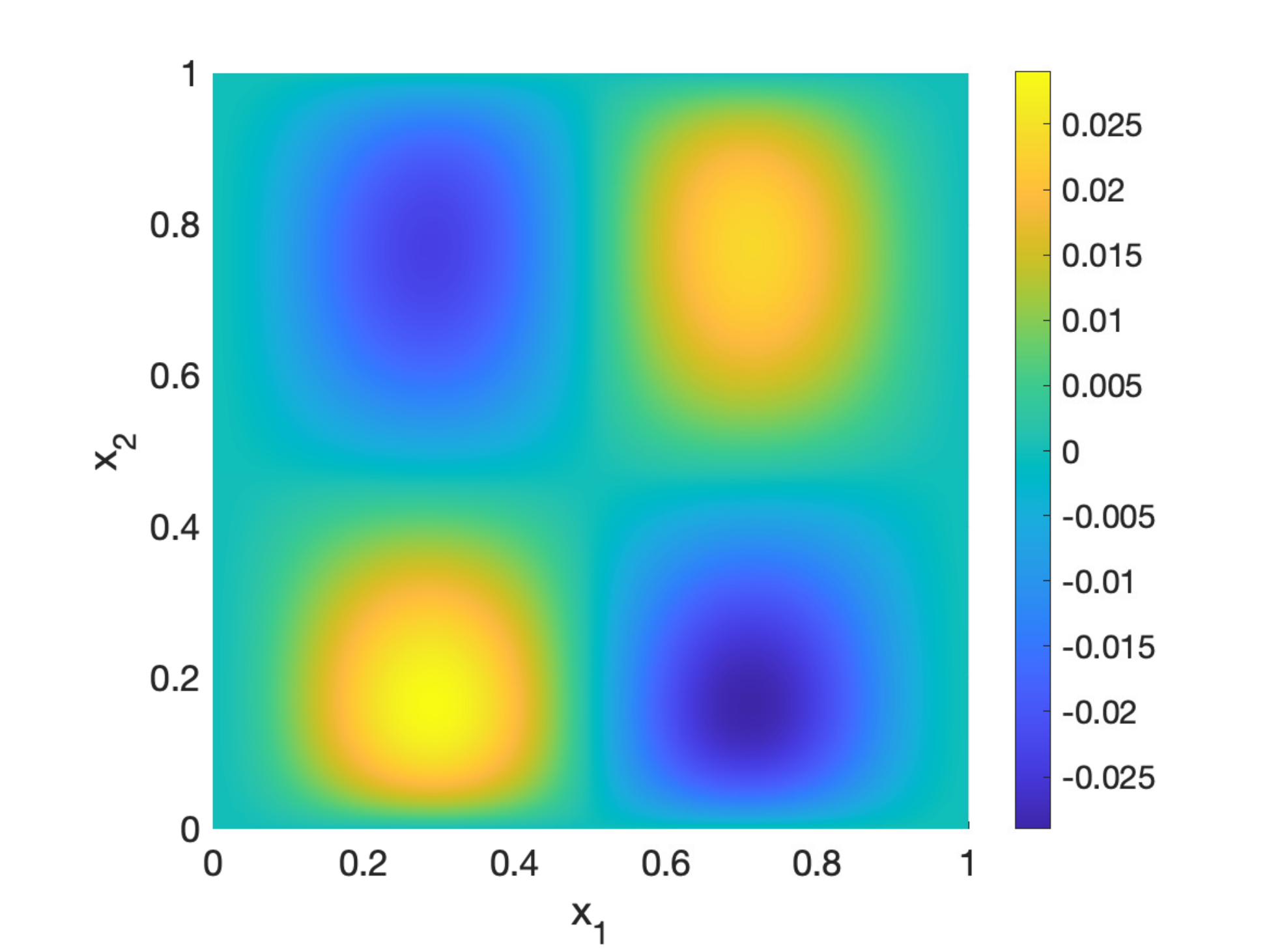} &
    \includegraphics[width=0.115\textwidth]{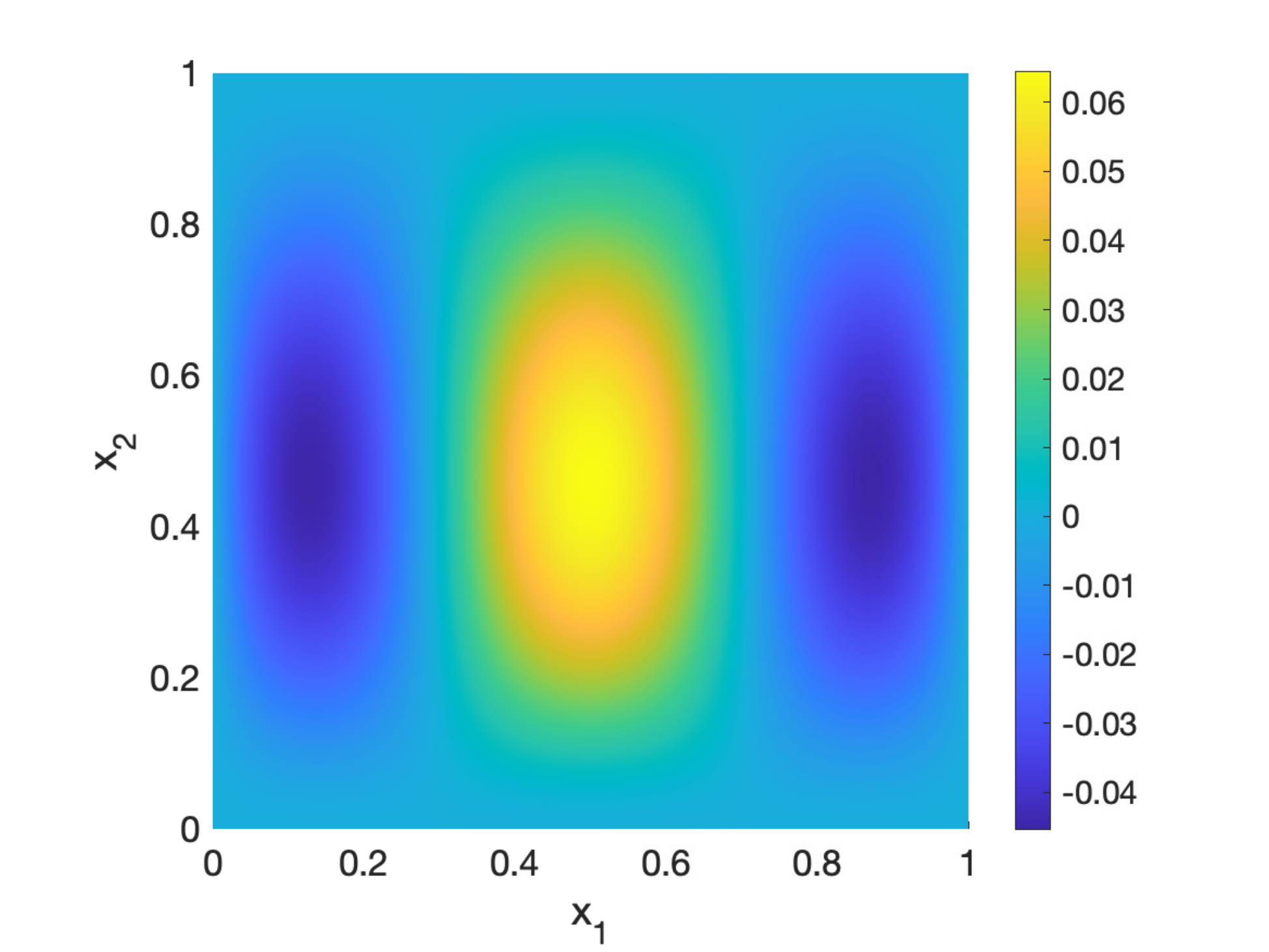} &
        \includegraphics[width=0.115\textwidth]{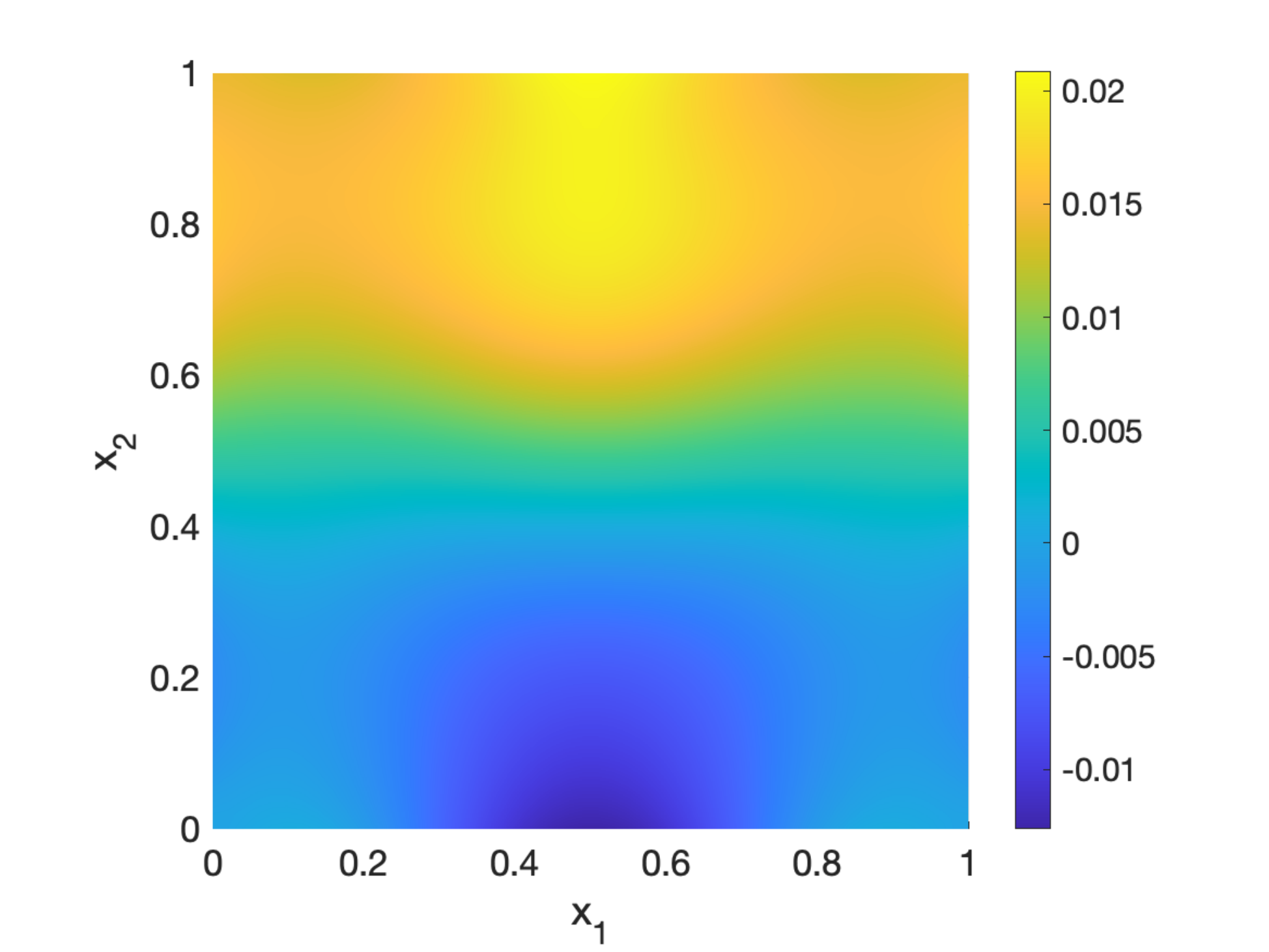} &
         \includegraphics[width=0.115\textwidth]{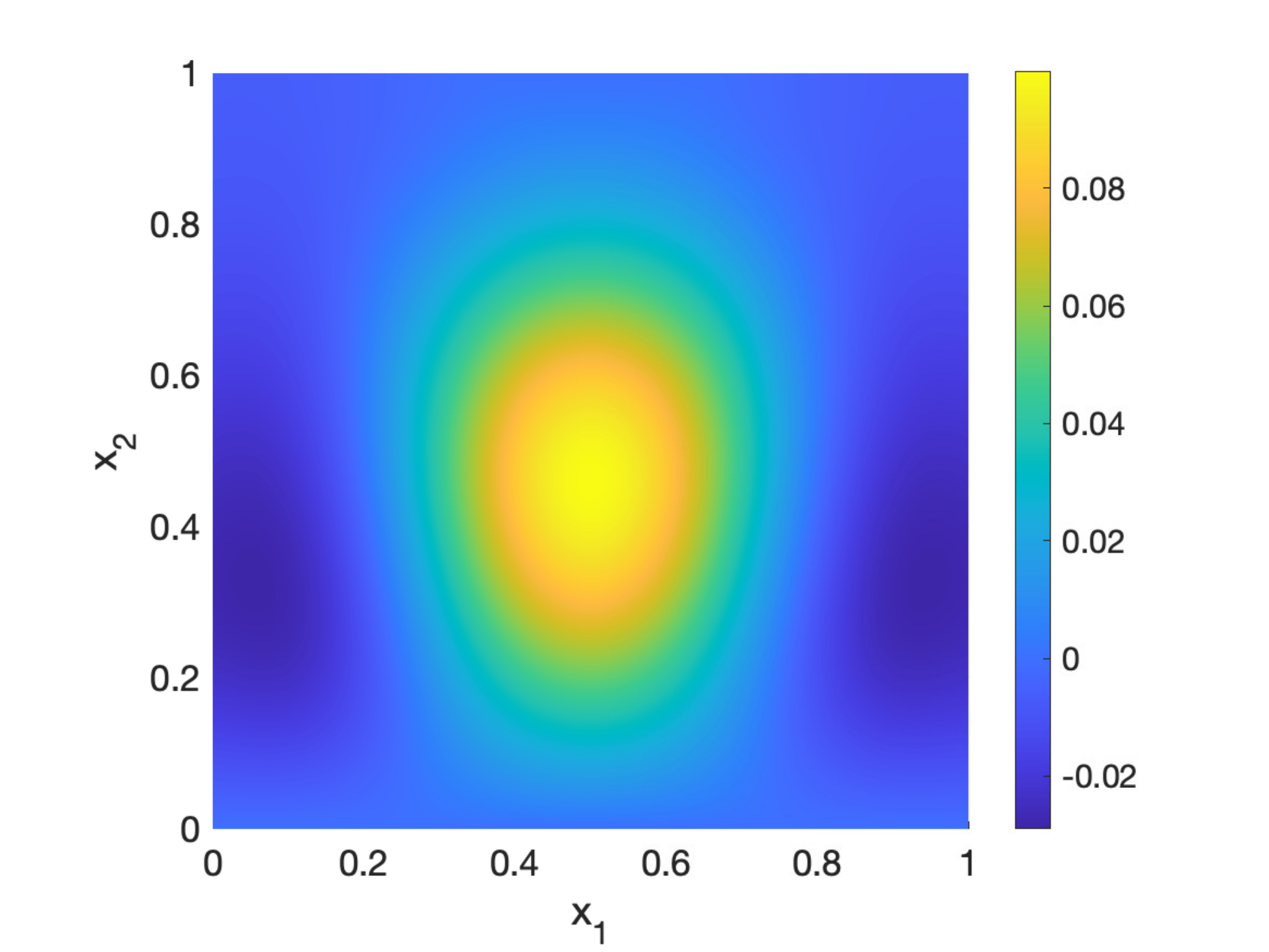} &
          \includegraphics[width=0.115\textwidth]{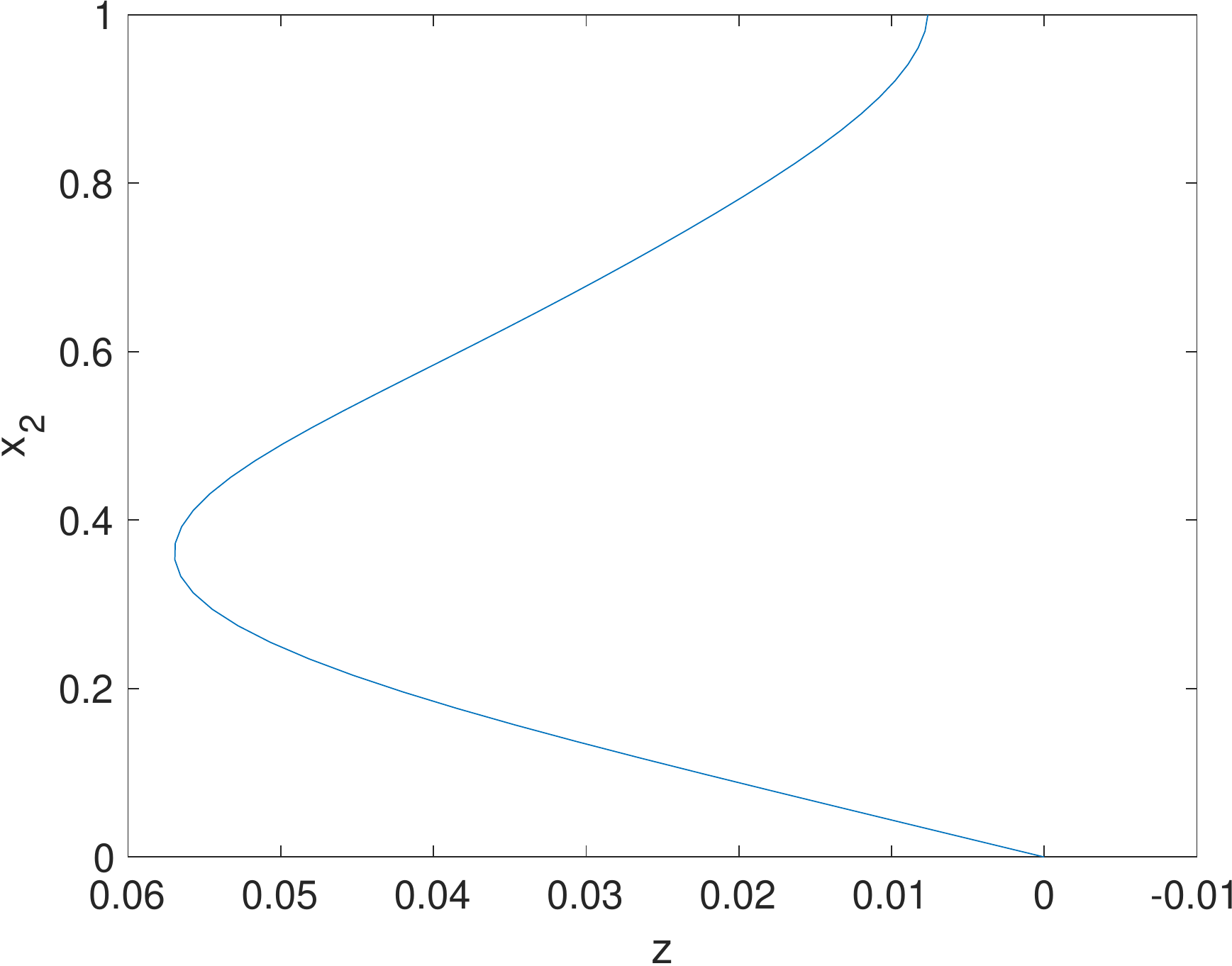} &
            \includegraphics[width=0.115\textwidth]{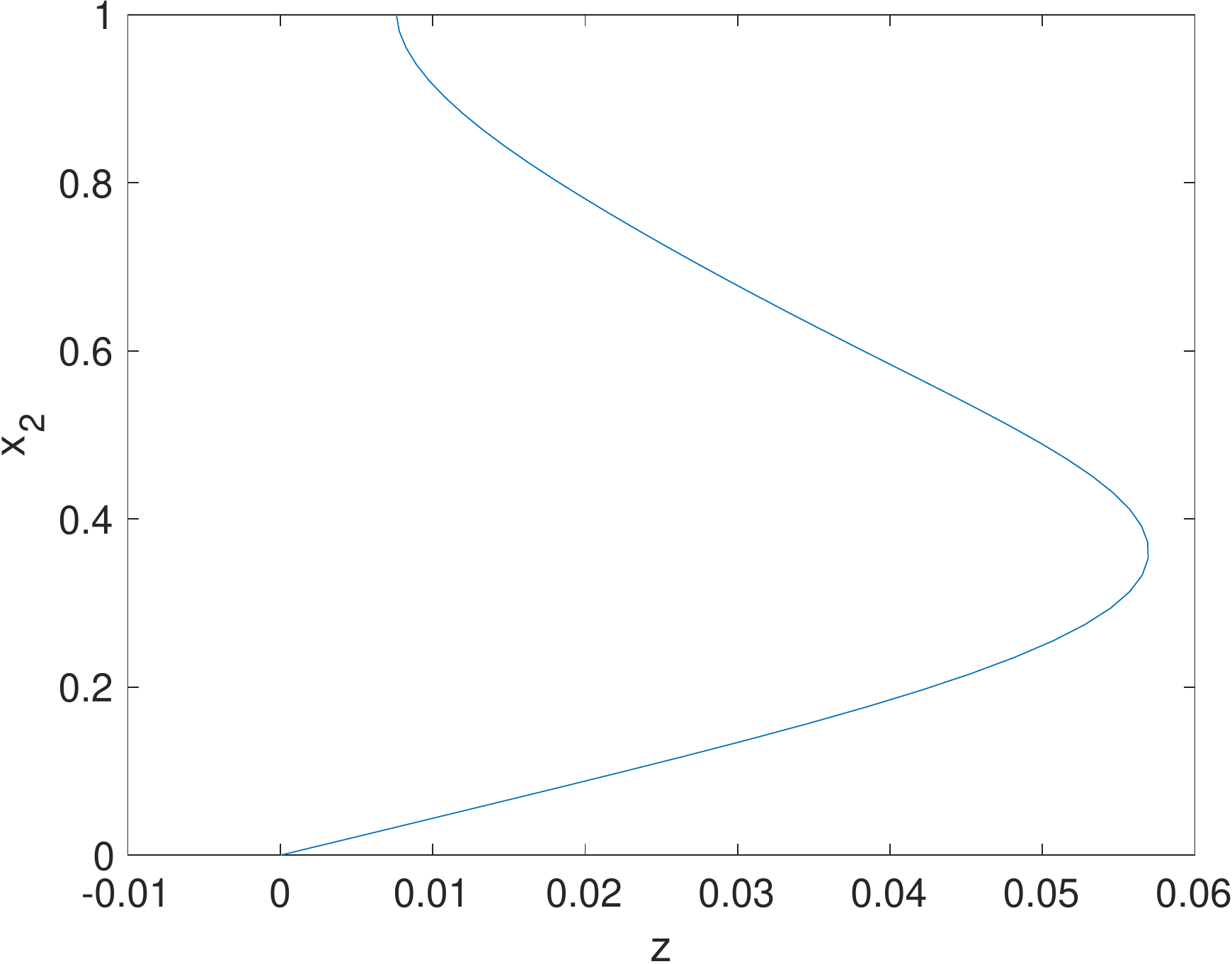}  \\
         .0102 &
  \includegraphics[width=0.115\textwidth]{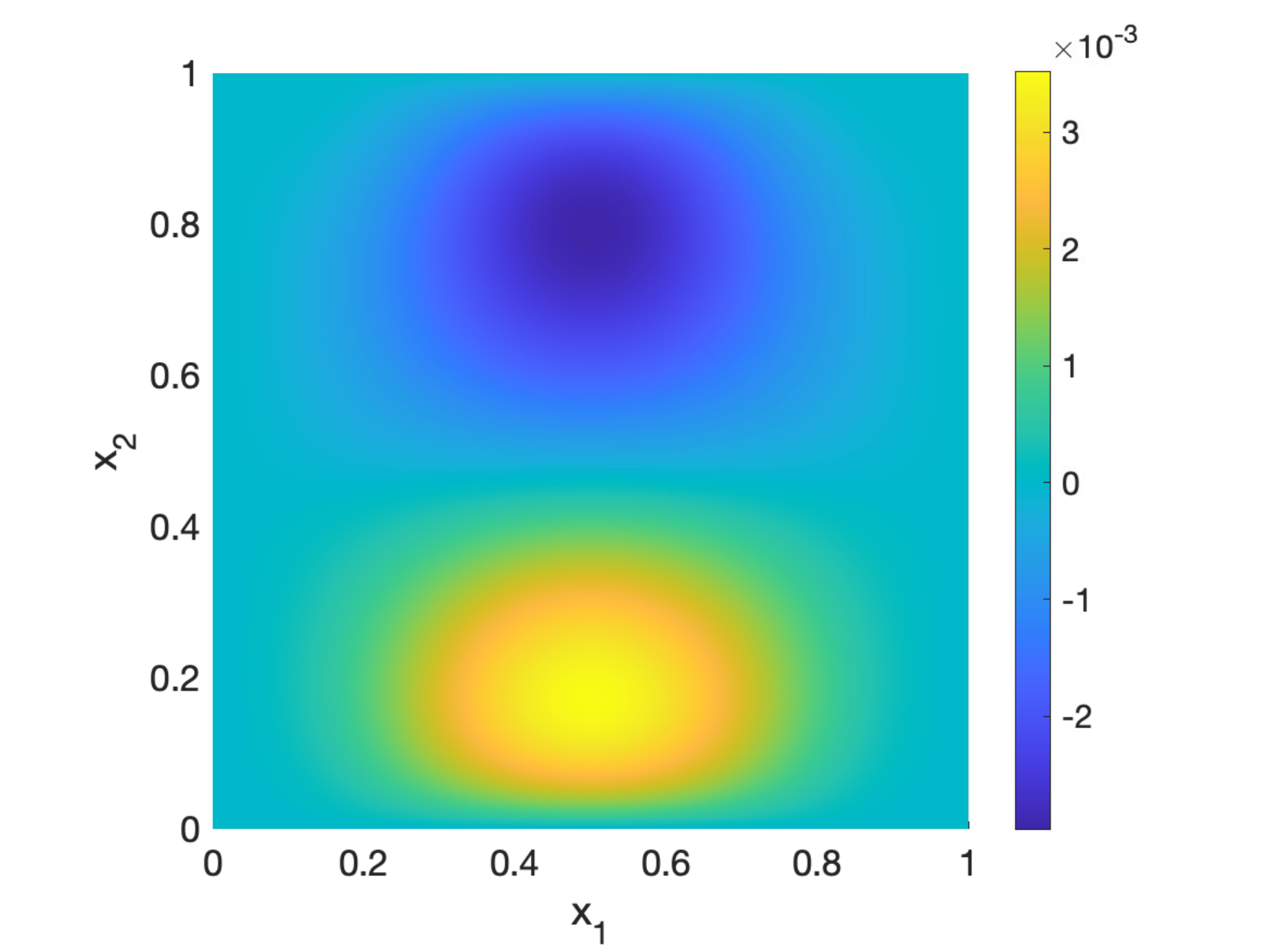} &
    \includegraphics[width=0.115\textwidth]{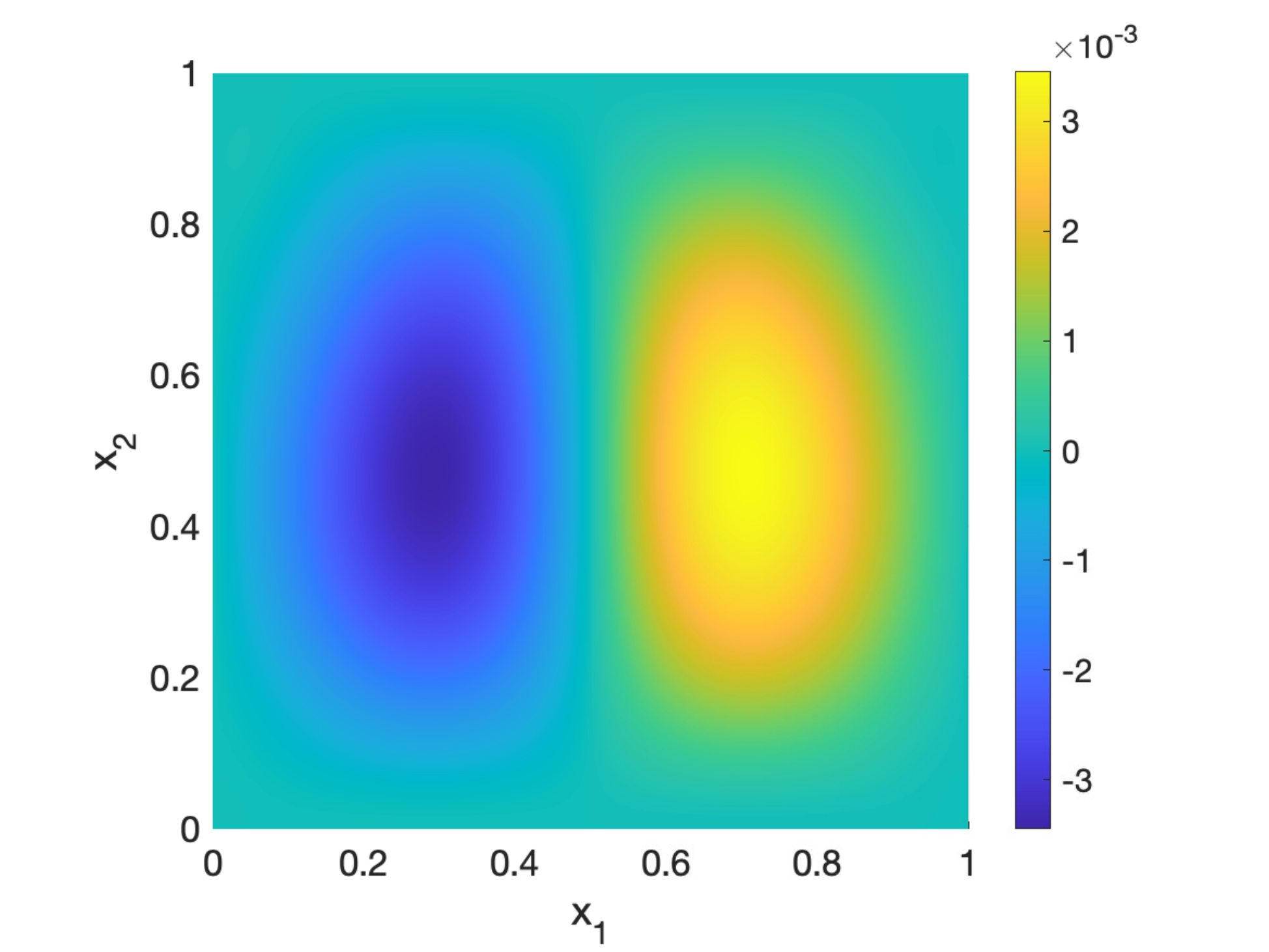} &
        \includegraphics[width=0.115\textwidth]{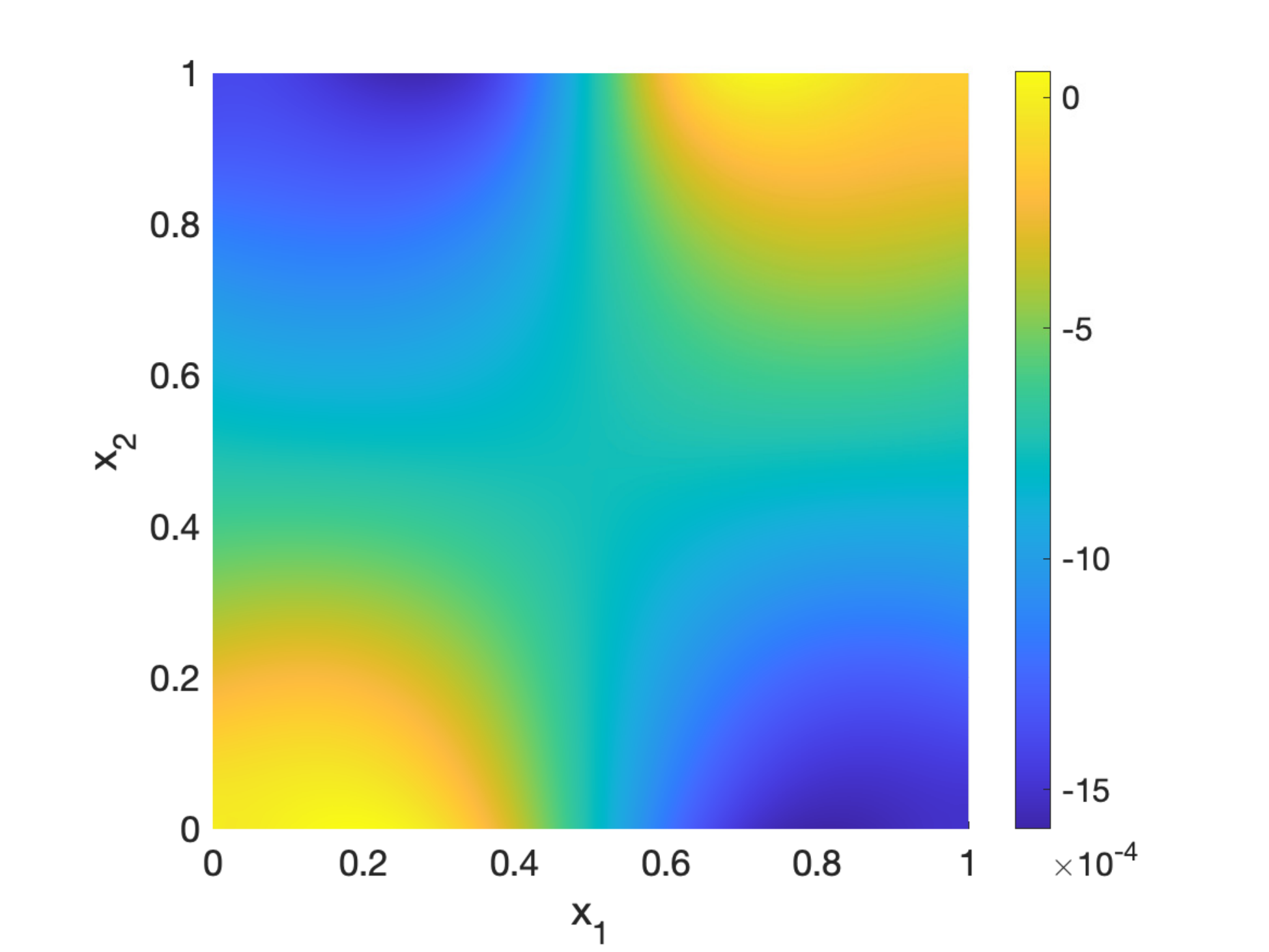} &
         \includegraphics[width=0.115\textwidth]{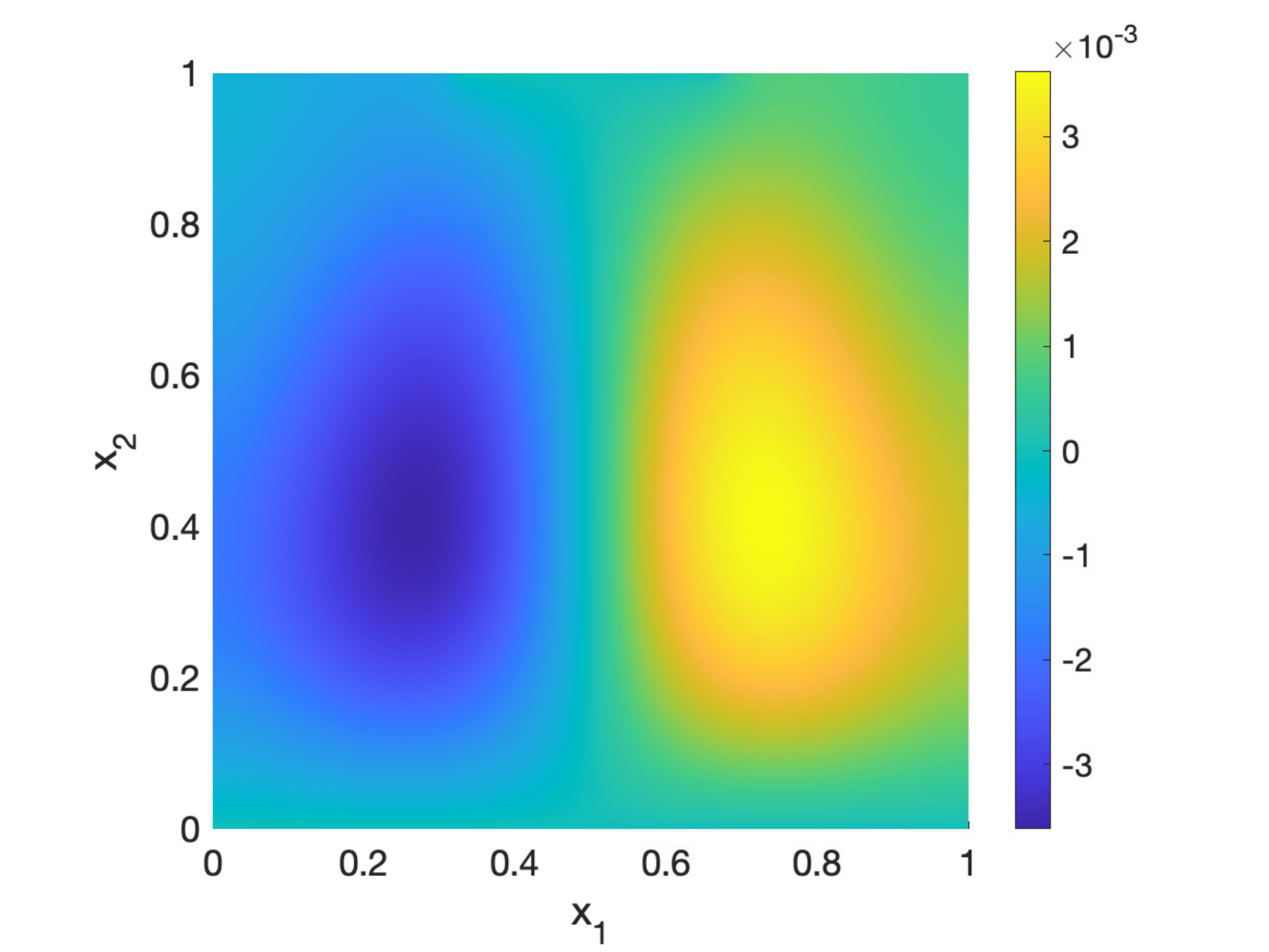} &
          \includegraphics[width=0.115\textwidth]{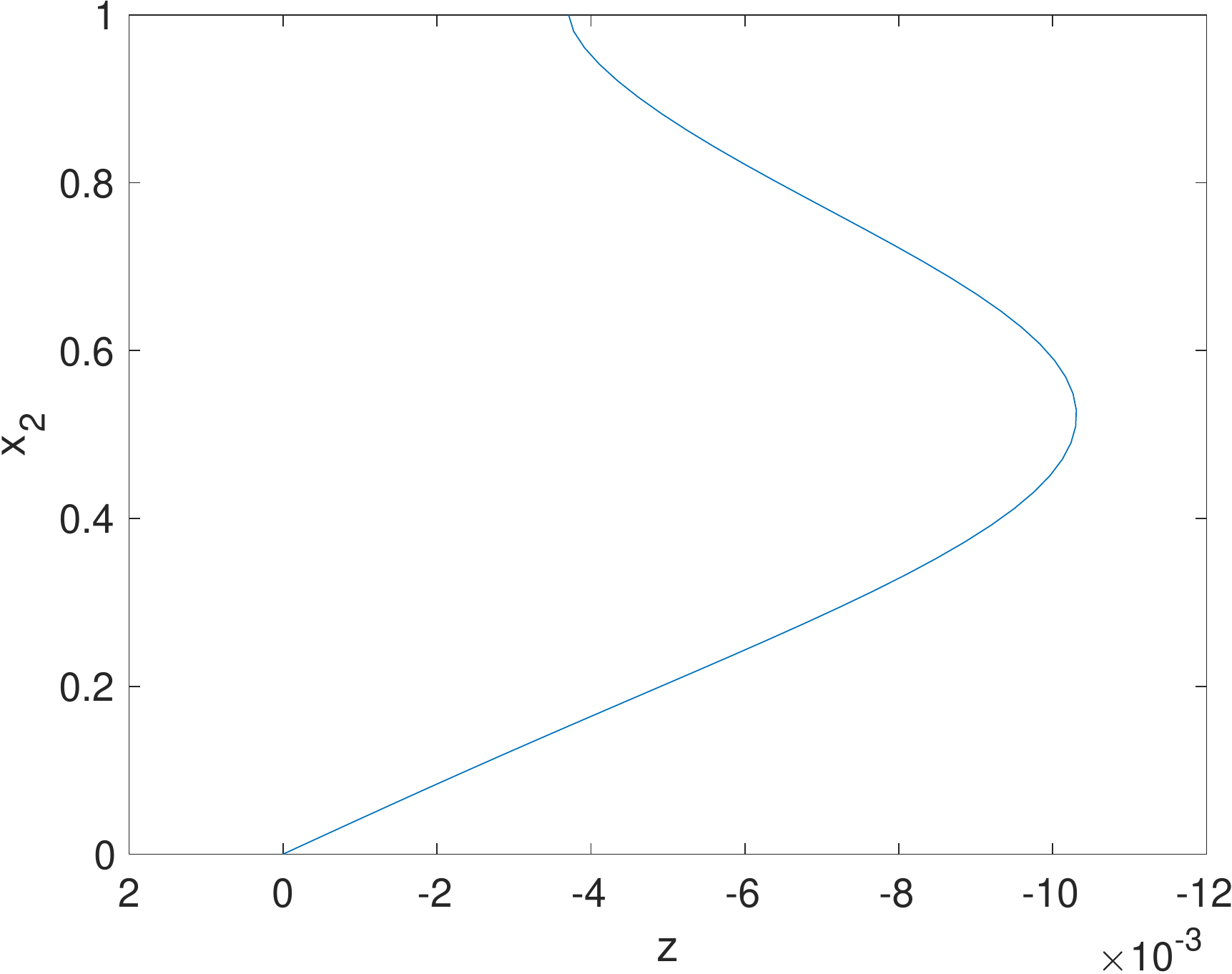} &
            \includegraphics[width=0.115\textwidth]{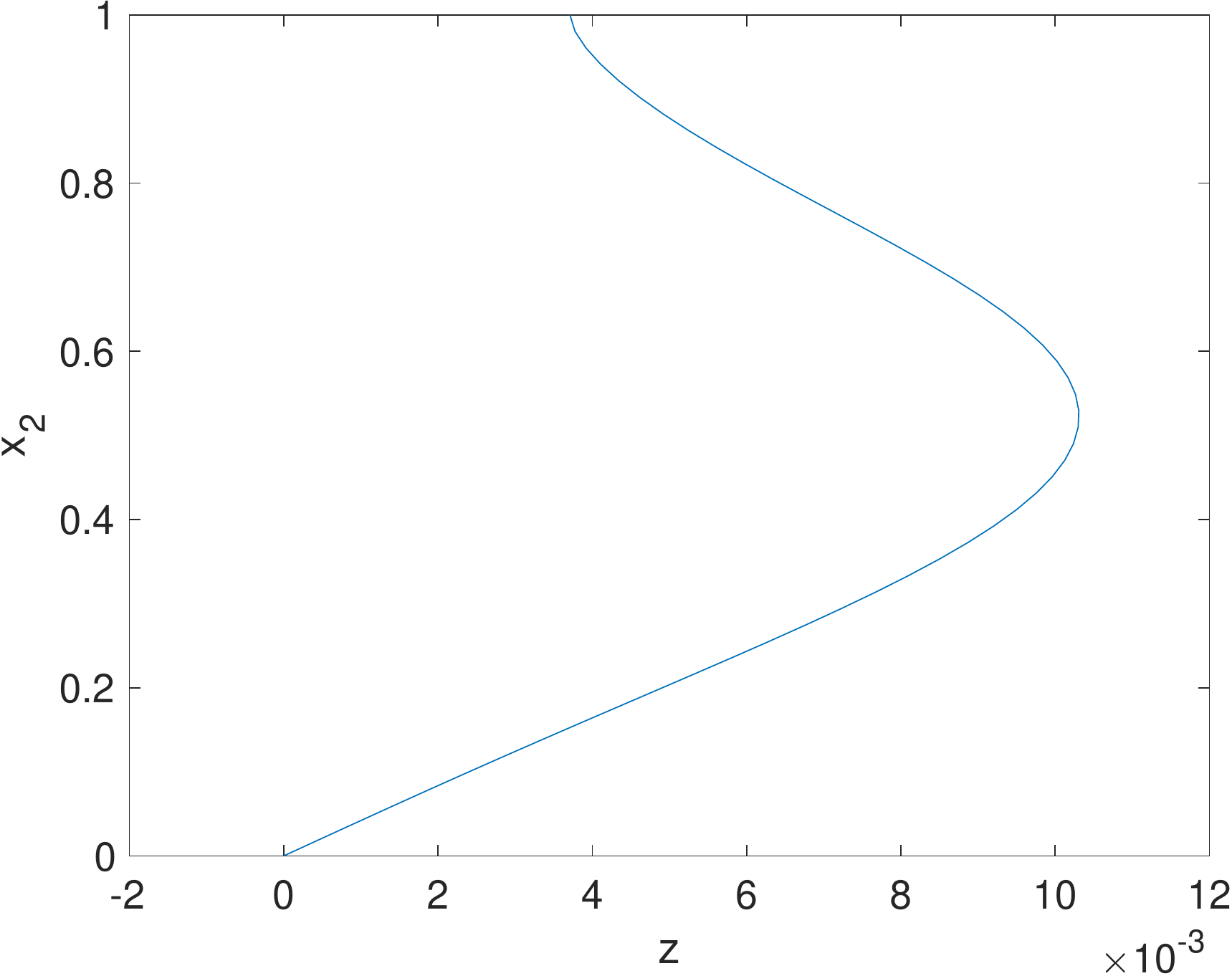}  \\
                 .0040  &
  \includegraphics[width=0.115\textwidth]{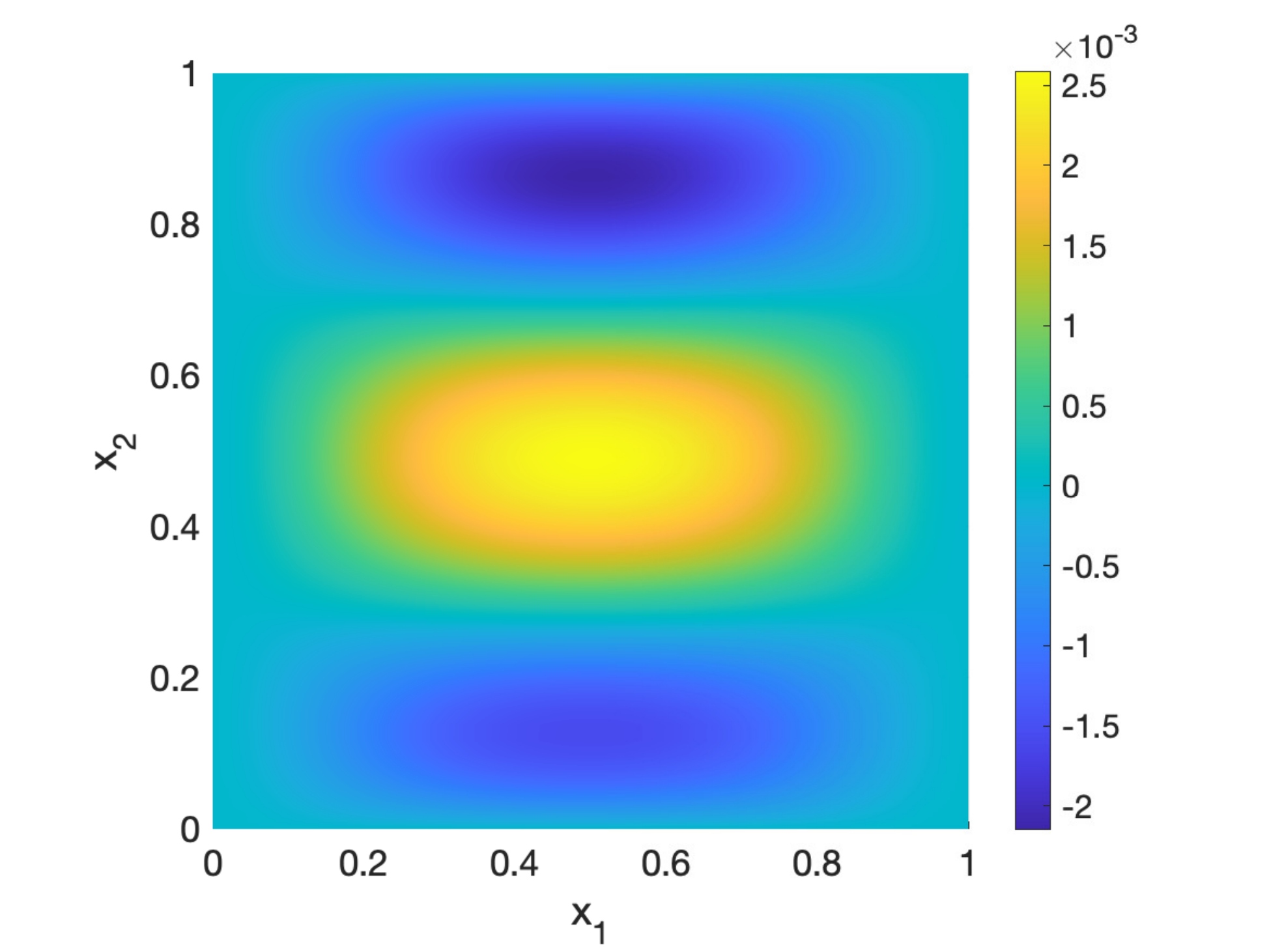} &
    \includegraphics[width=0.115\textwidth]{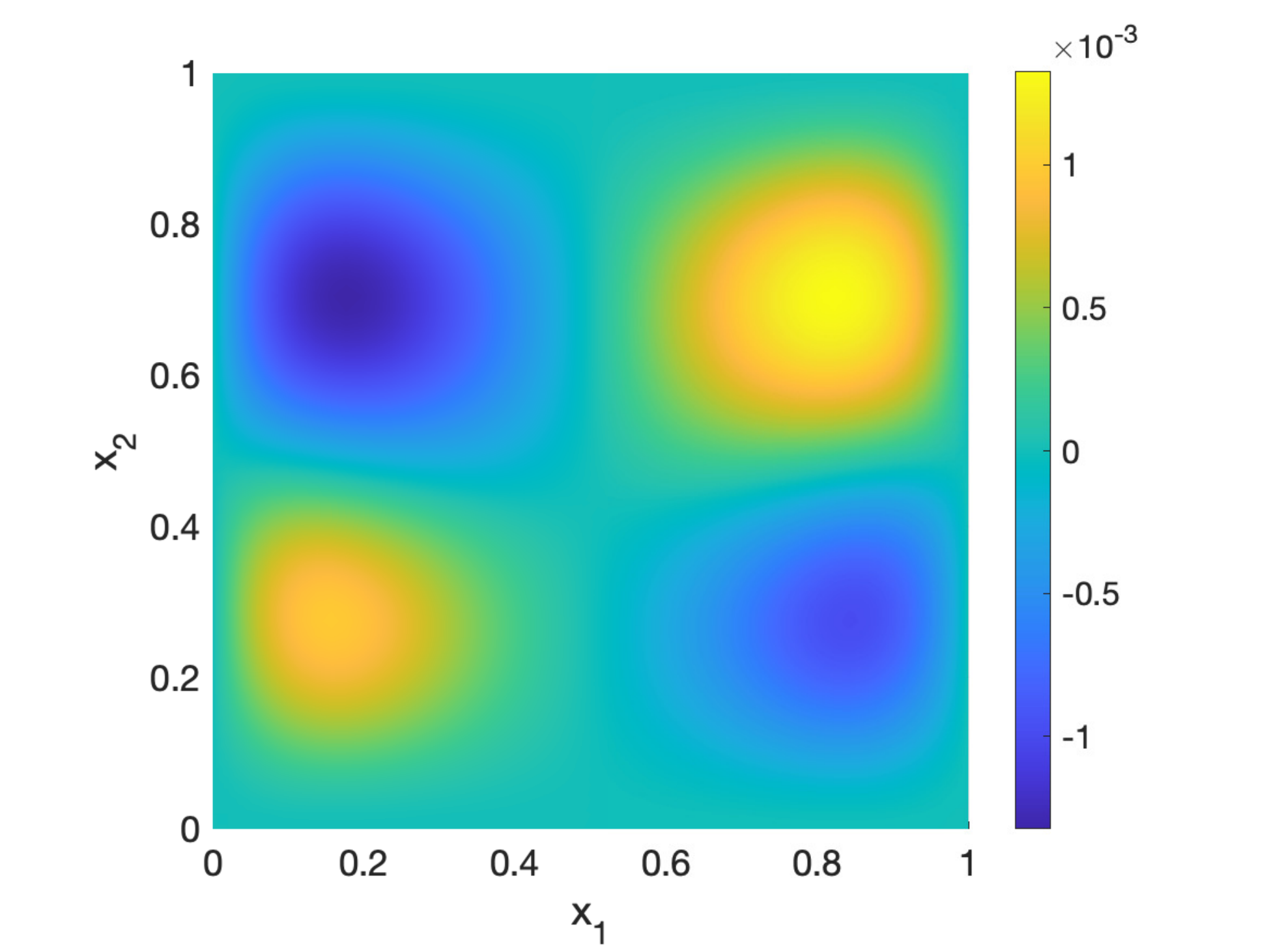} &
        \includegraphics[width=0.115\textwidth]{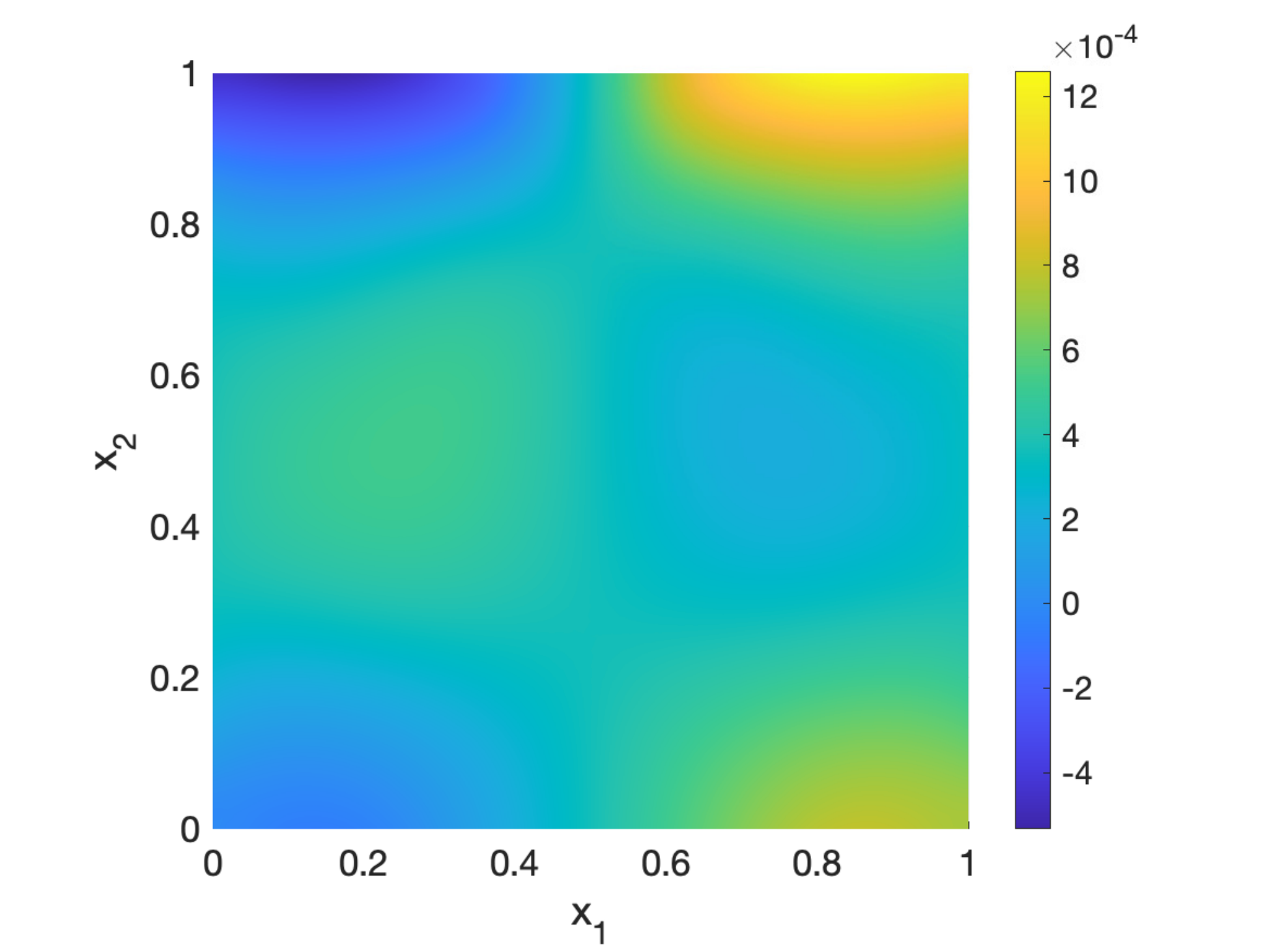} &
         \includegraphics[width=0.115\textwidth]{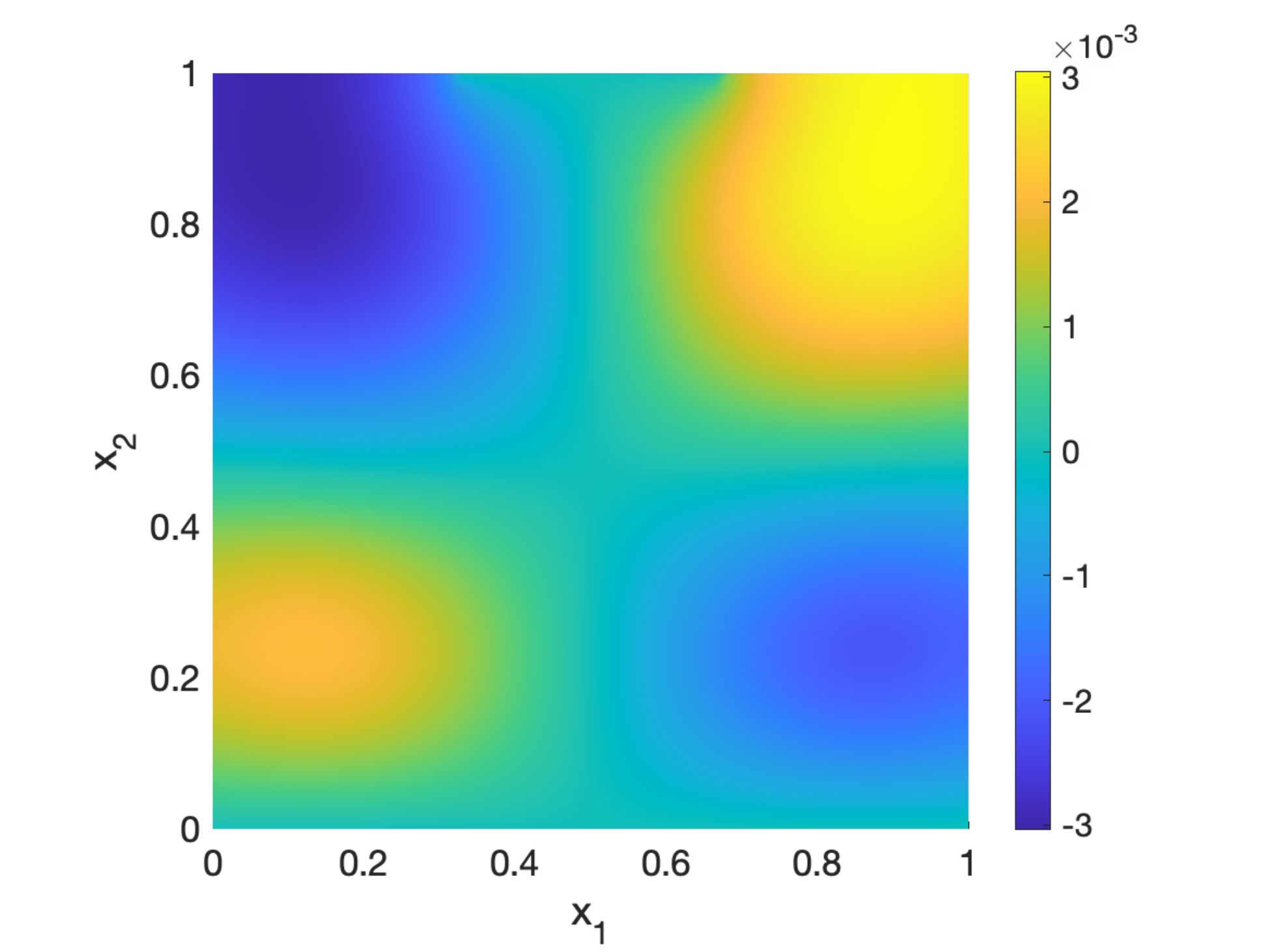} &
          \includegraphics[width=0.115\textwidth]{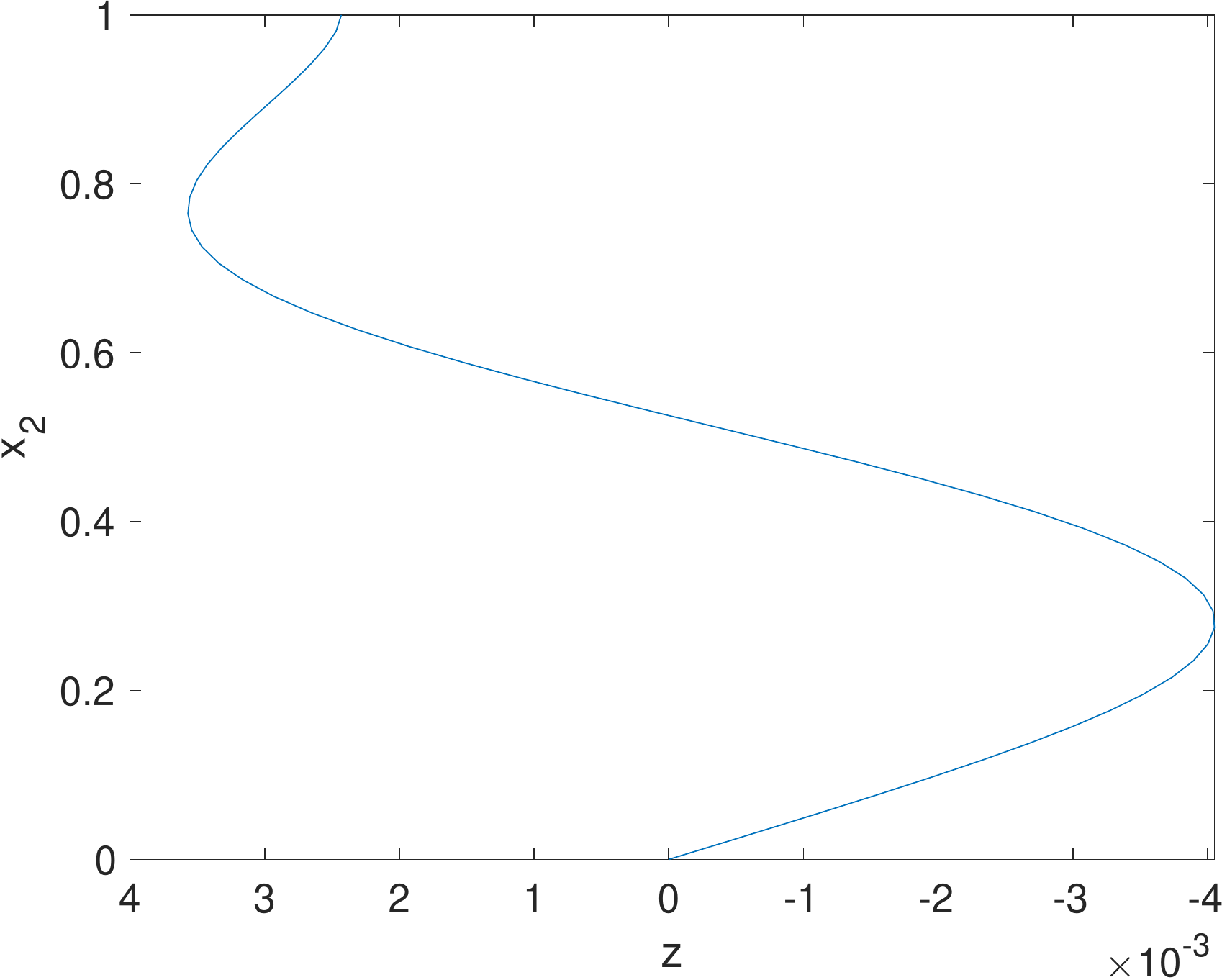} &
            \includegraphics[width=0.115\textwidth]{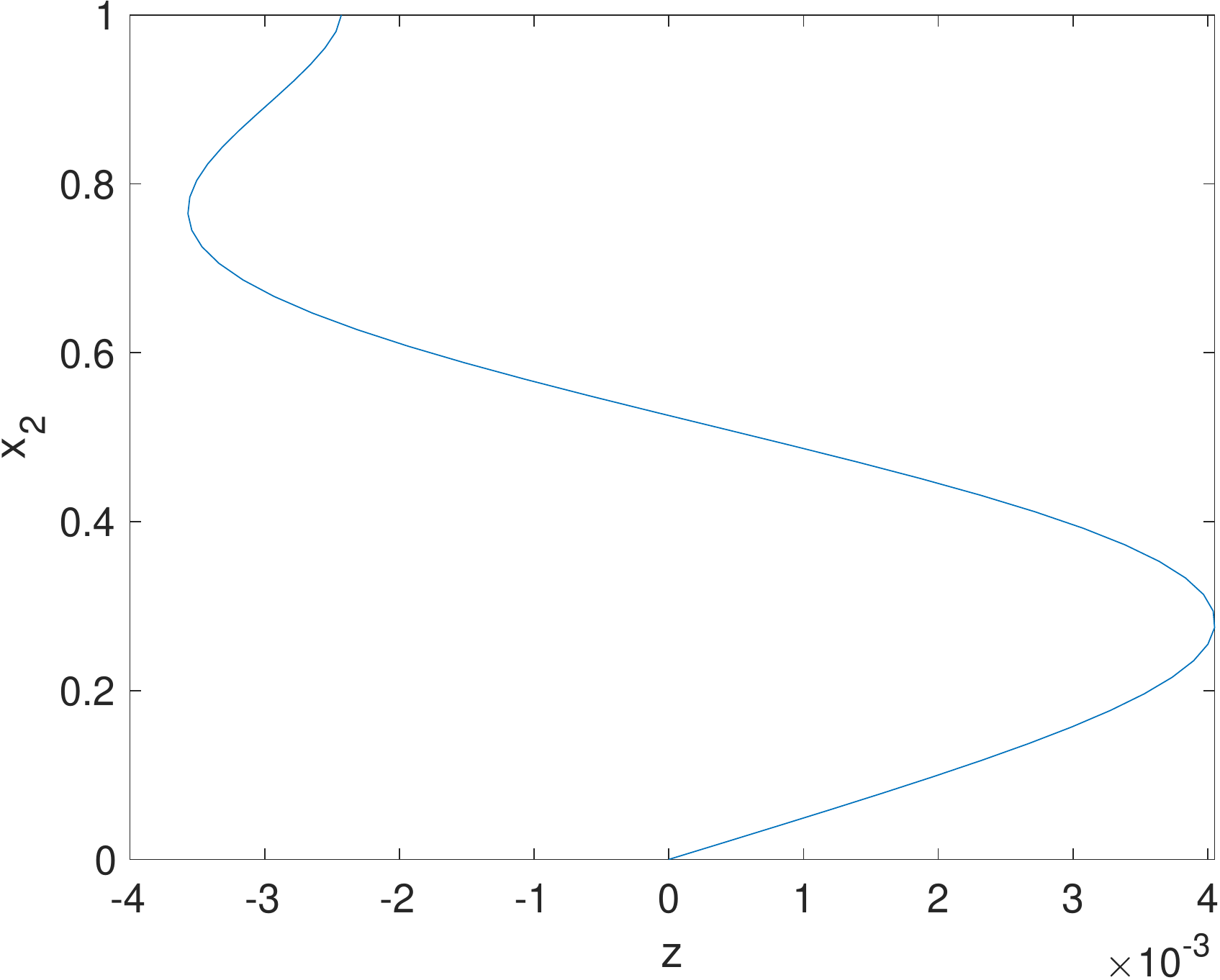}  \\
                         .0032 &
  \includegraphics[width=0.115\textwidth]{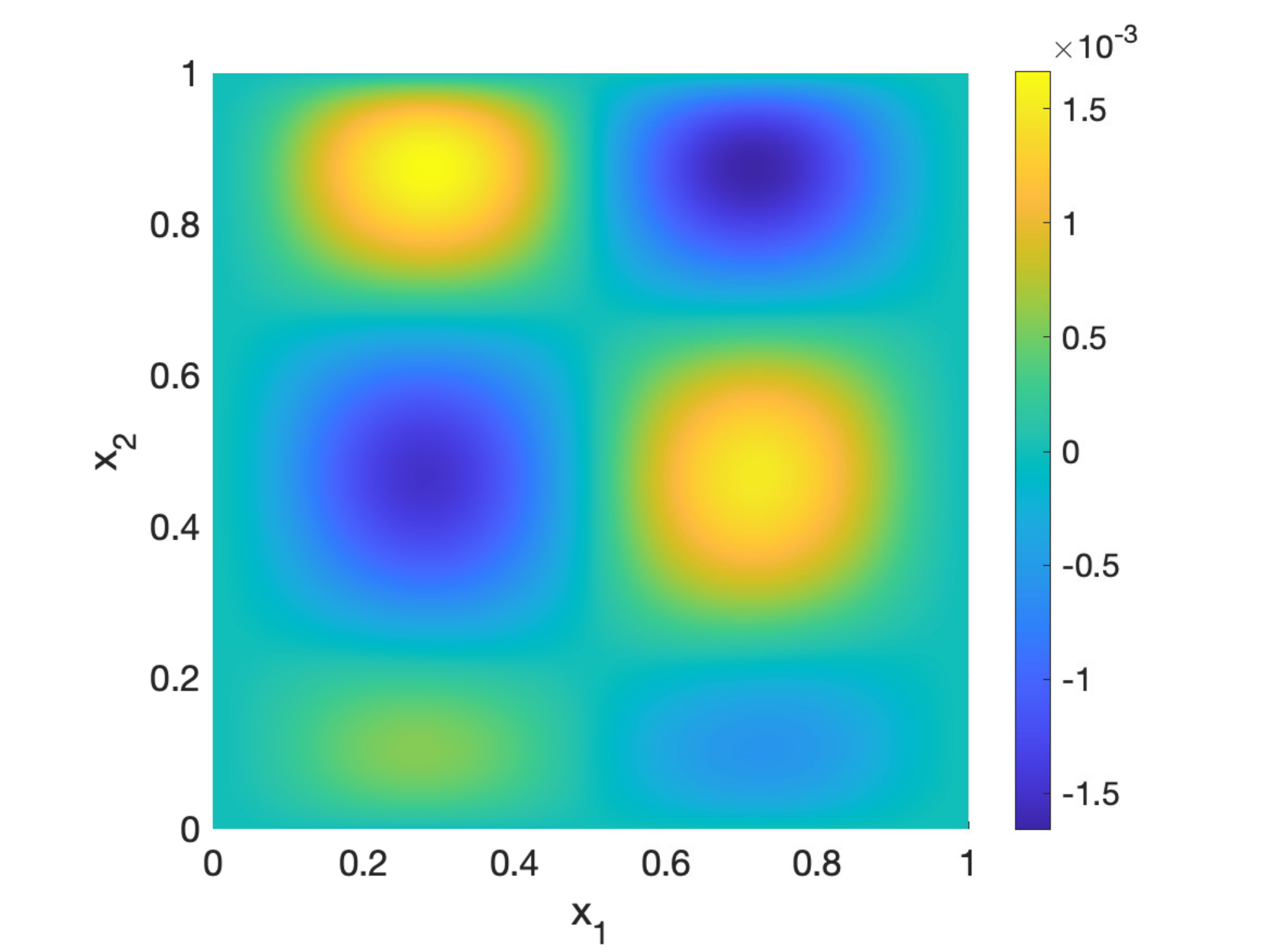} &
    \includegraphics[width=0.115\textwidth]{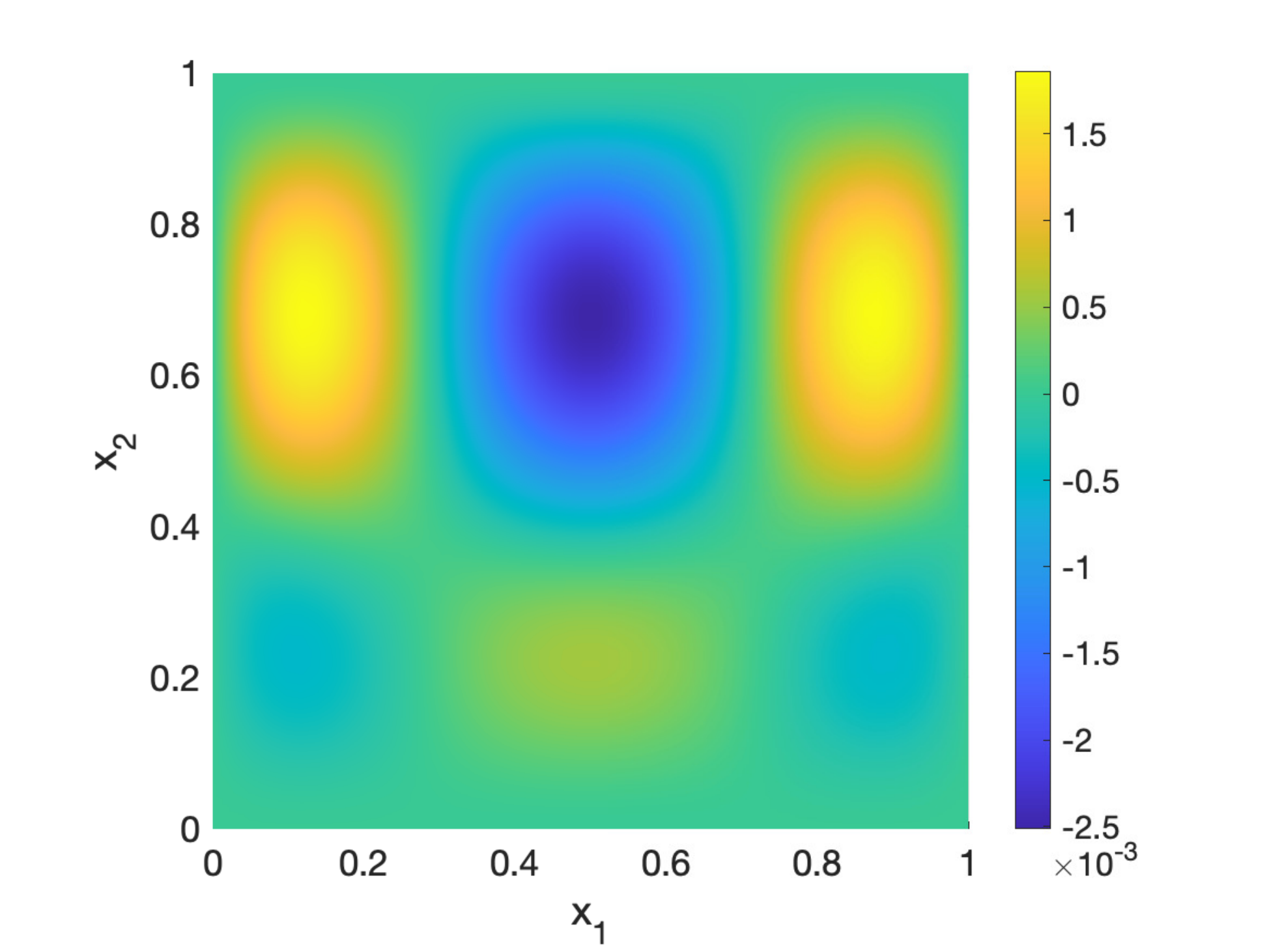} &
        \includegraphics[width=0.115\textwidth]{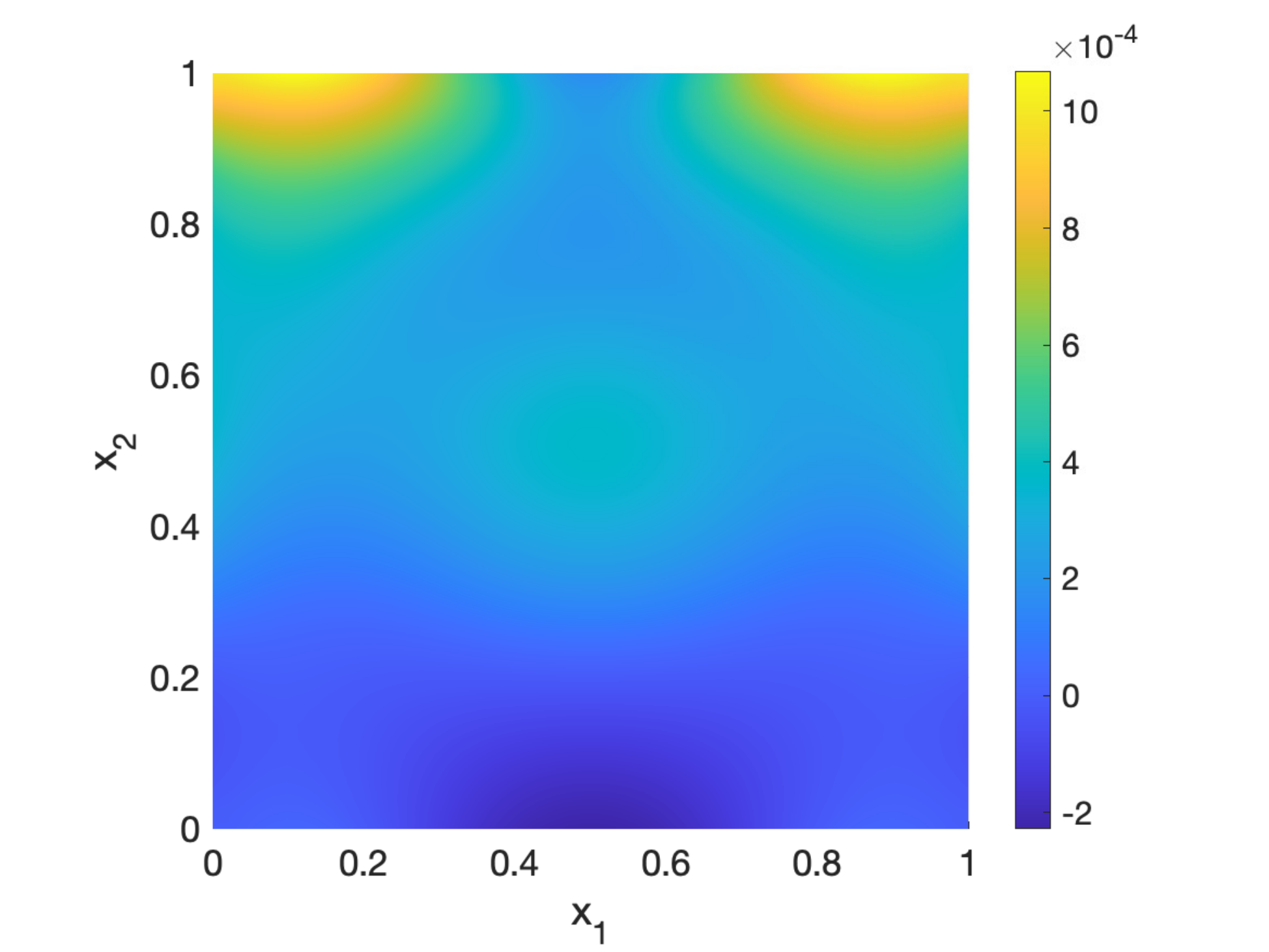} &
         \includegraphics[width=0.115\textwidth]{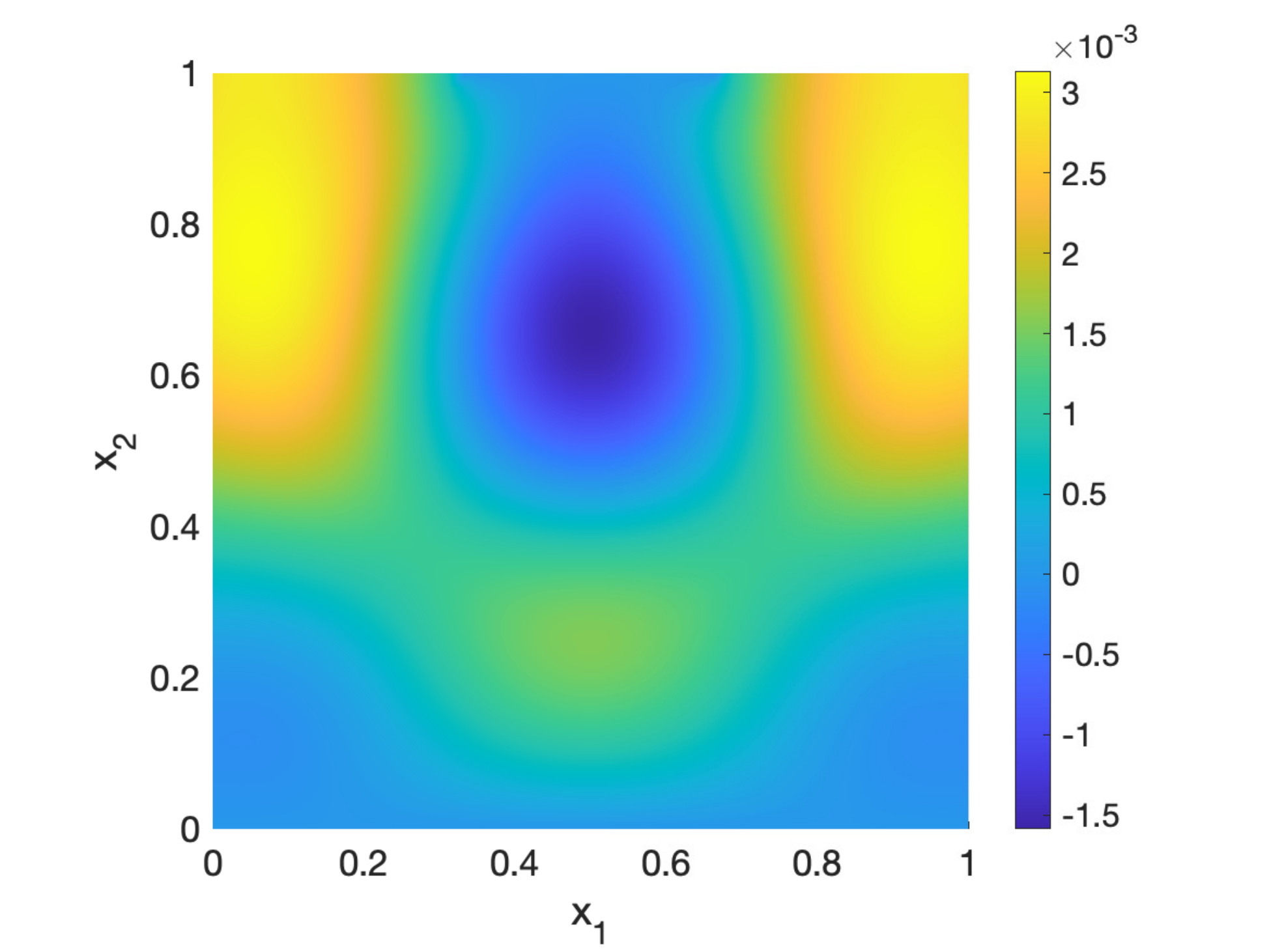} &
          \includegraphics[width=0.115\textwidth]{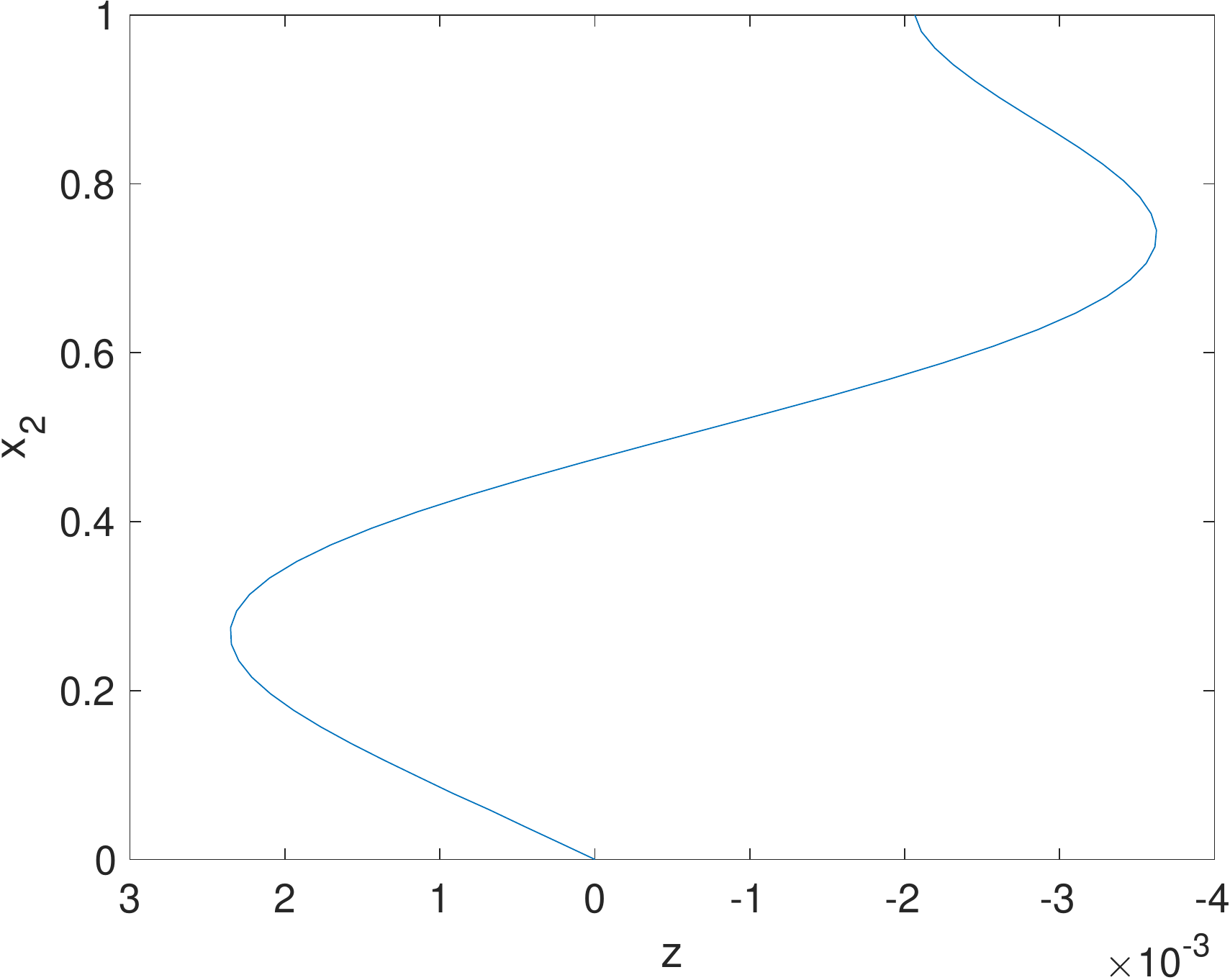} &
            \includegraphics[width=0.115\textwidth]{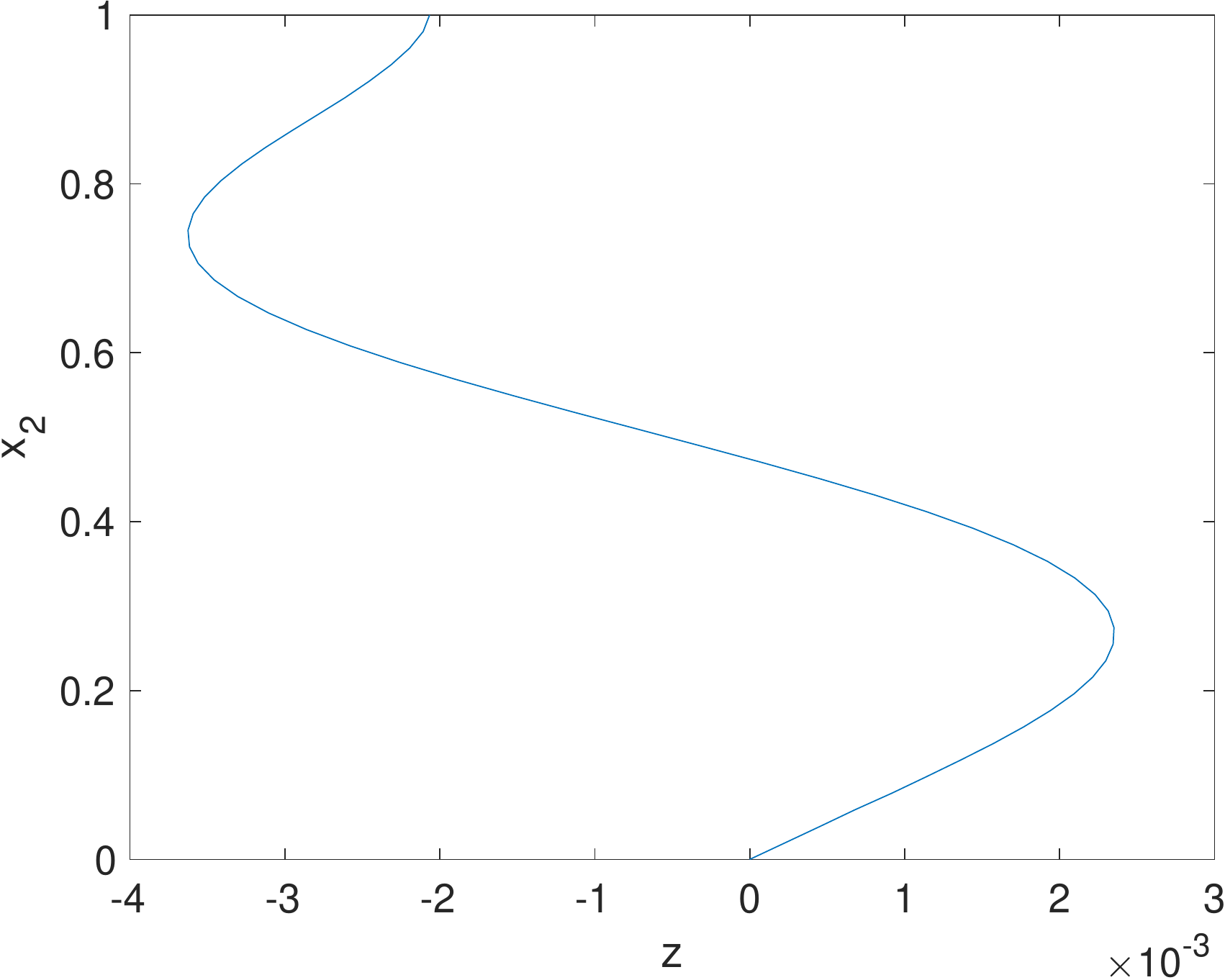}  \\
                                 .0011 &
  \includegraphics[width=0.115\textwidth]{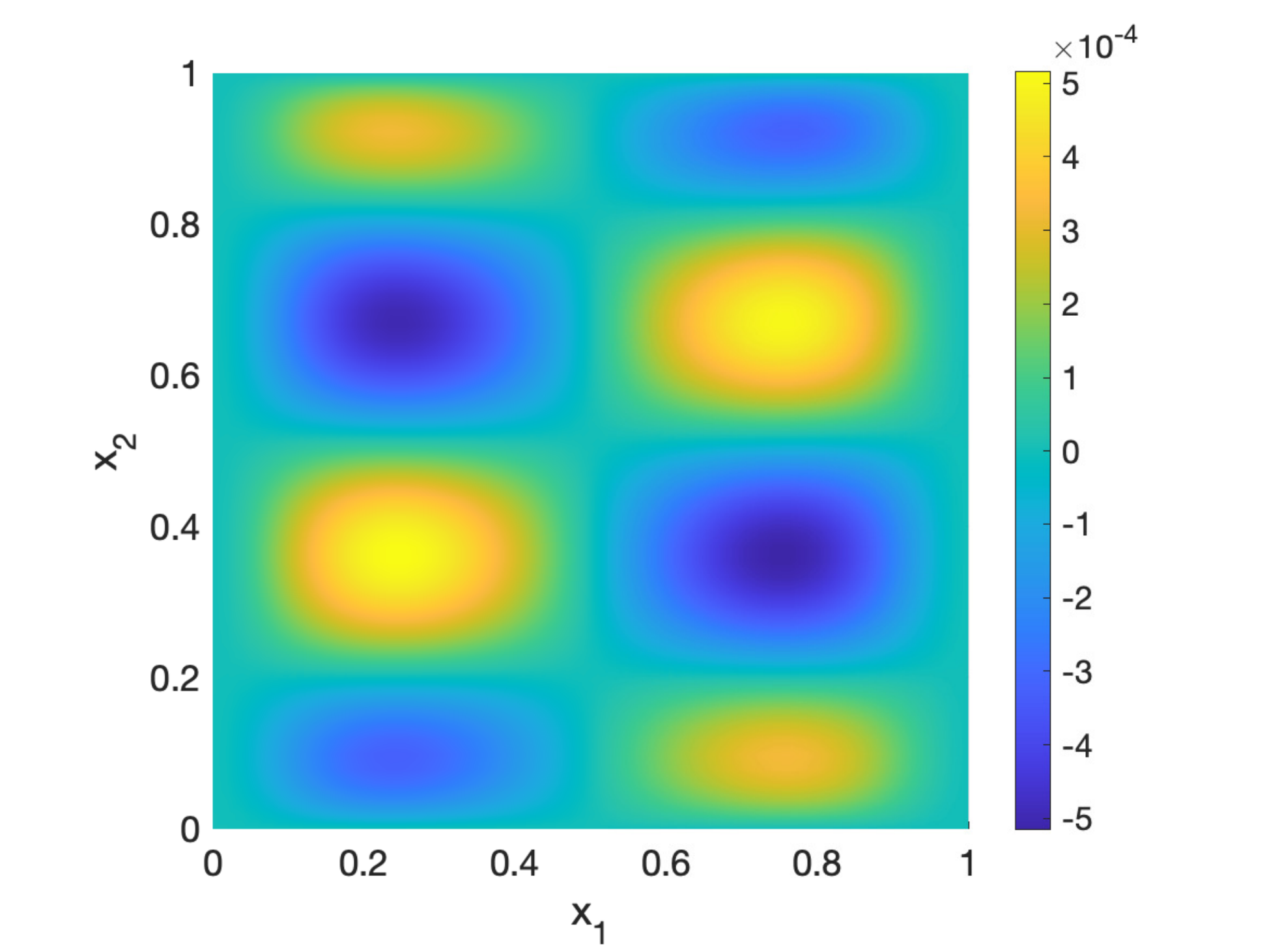} &
    \includegraphics[width=0.115\textwidth]{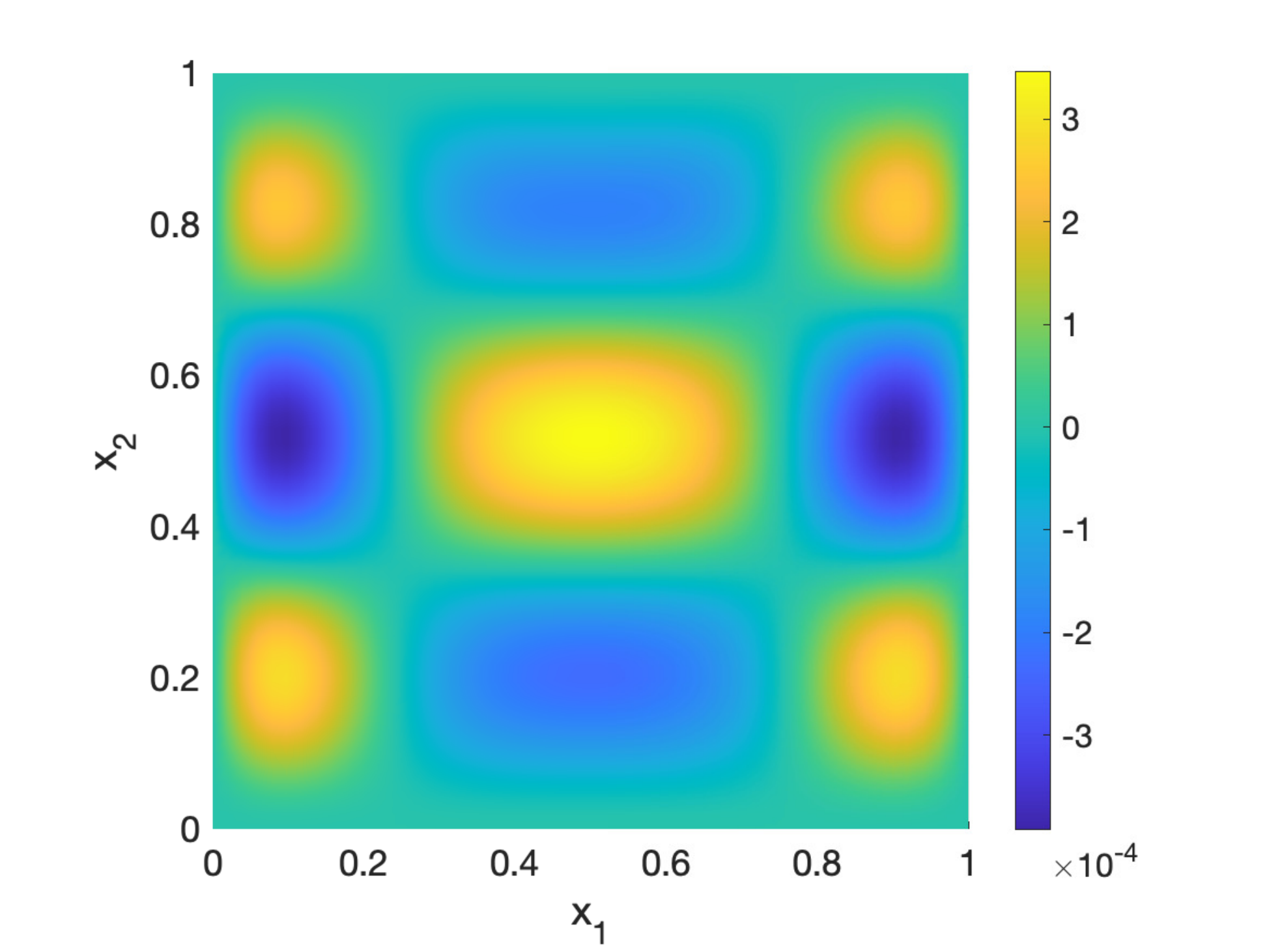} &
        \includegraphics[width=0.115\textwidth]{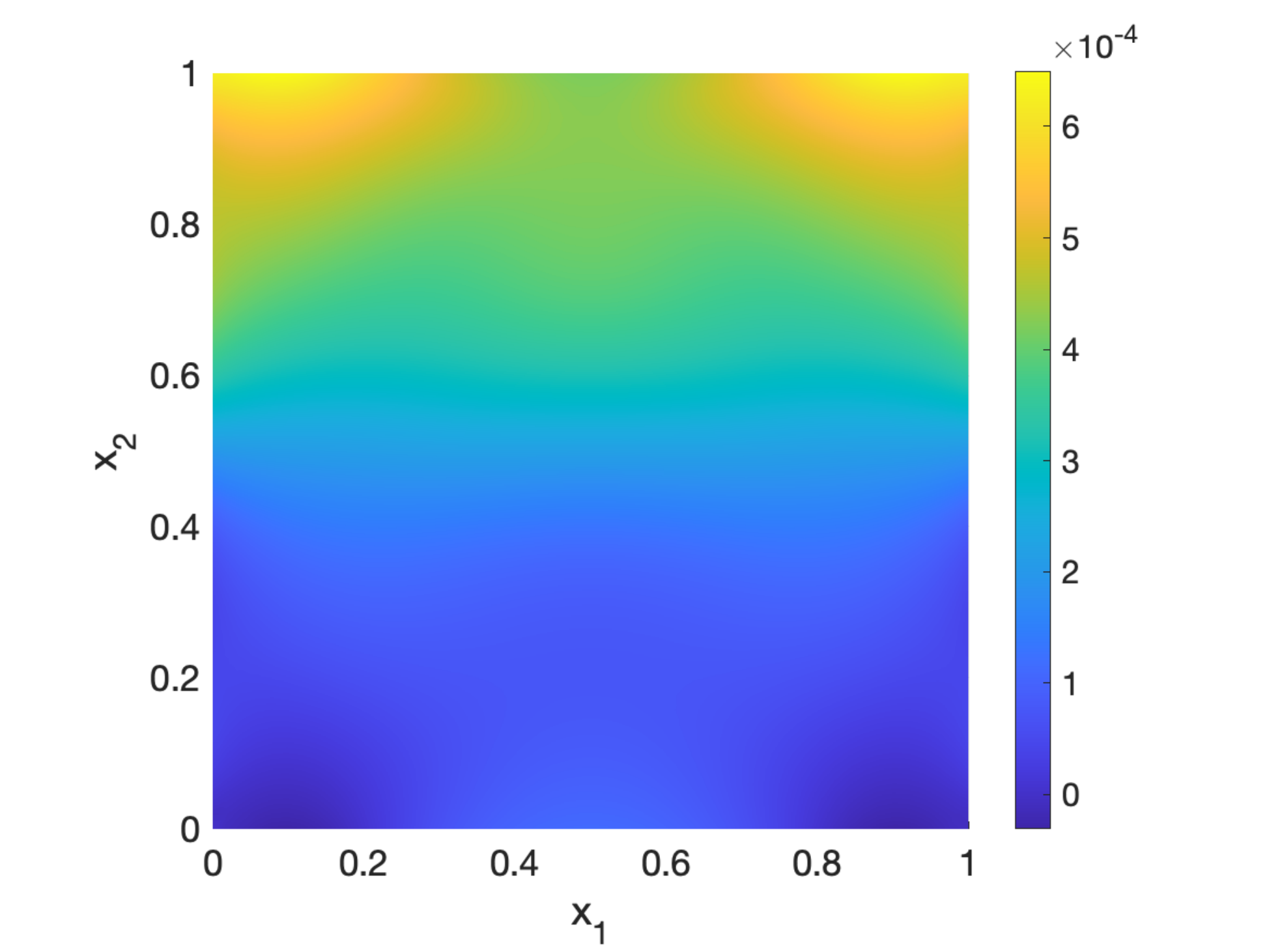} &
         \includegraphics[width=0.115\textwidth]{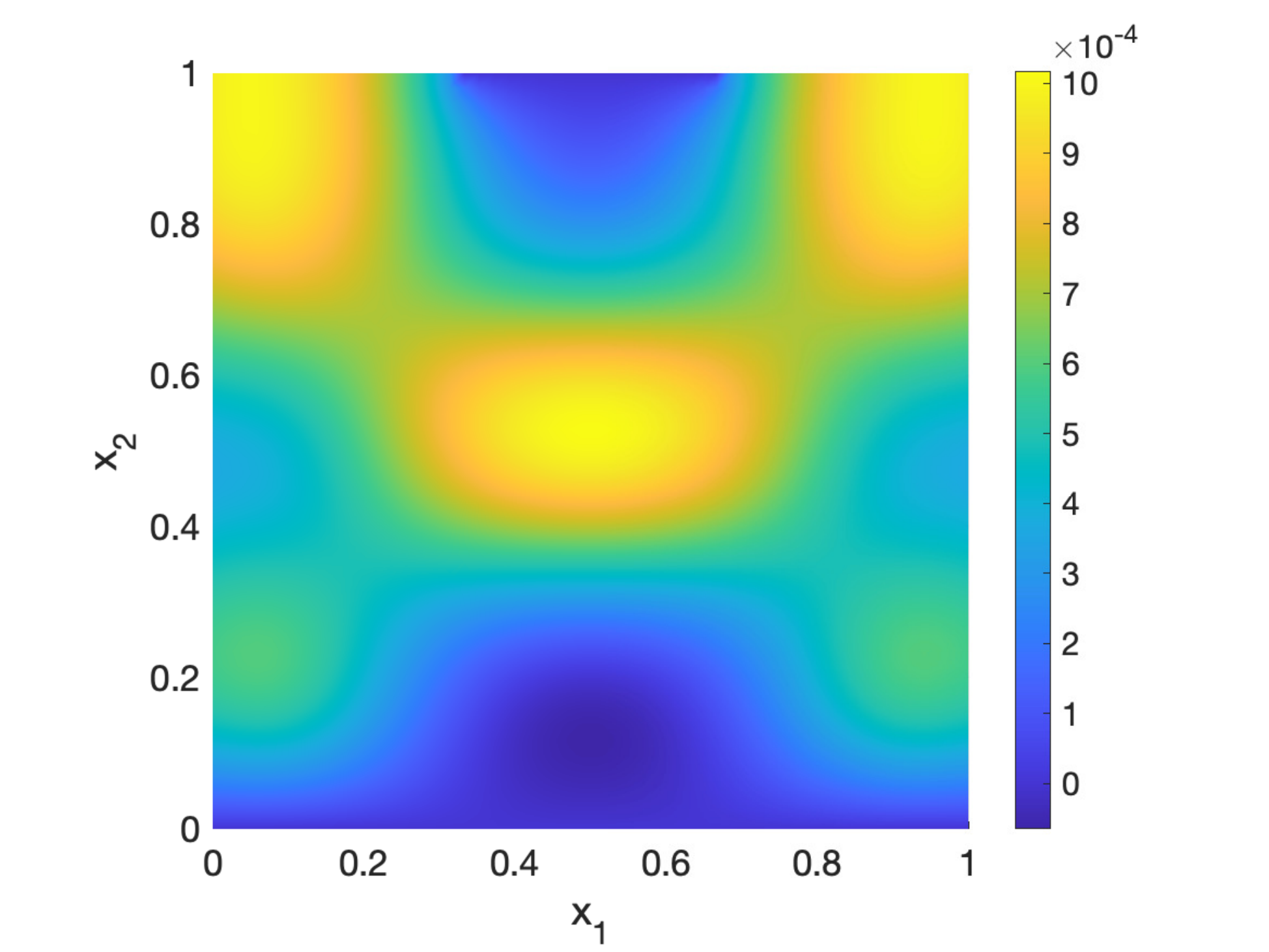} &
          \includegraphics[width=0.115\textwidth]{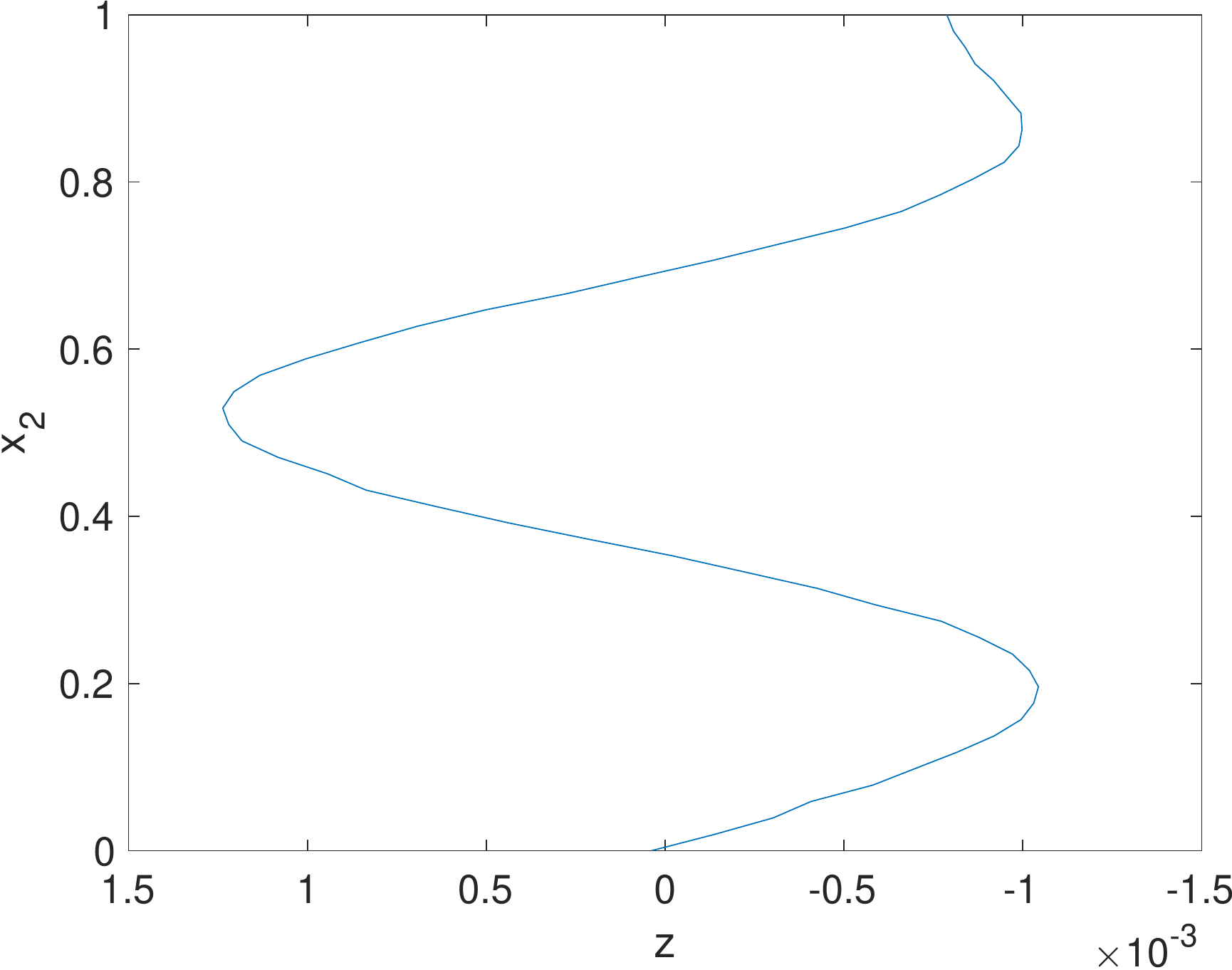} &
            \includegraphics[width=0.115\textwidth]{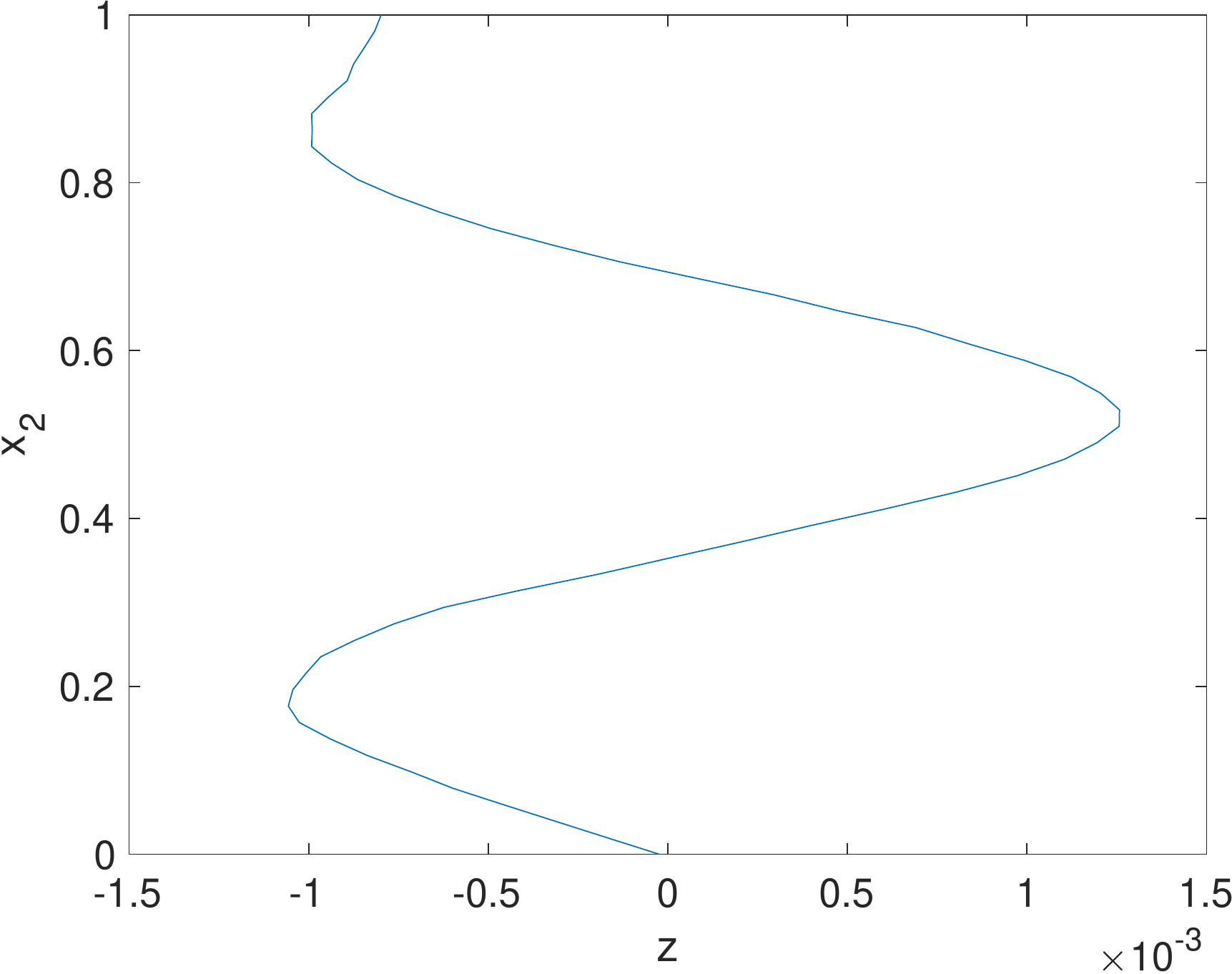}  \\
                                             .0011 &
  \includegraphics[width=0.115\textwidth]{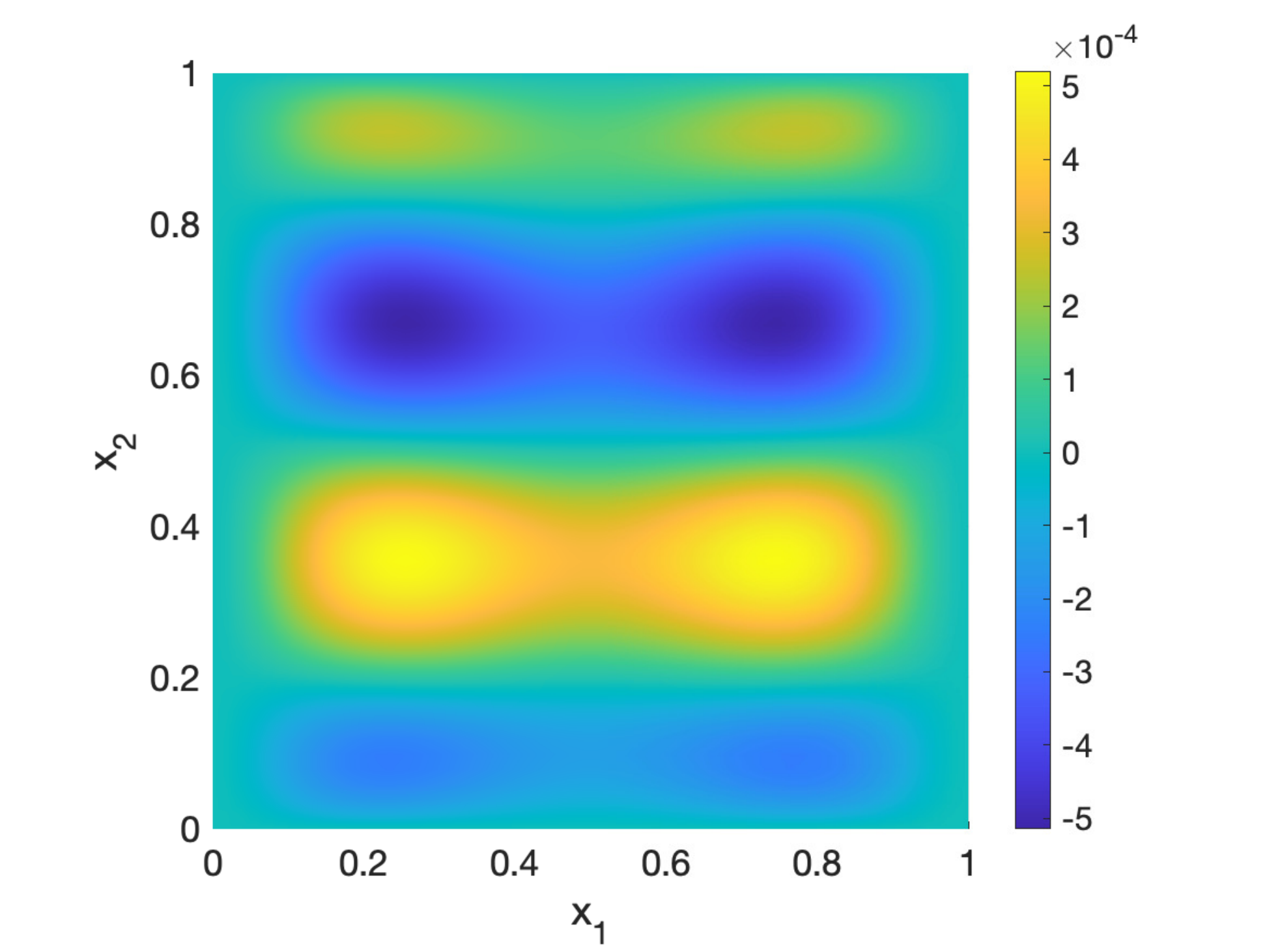} &
    \includegraphics[width=0.115\textwidth]{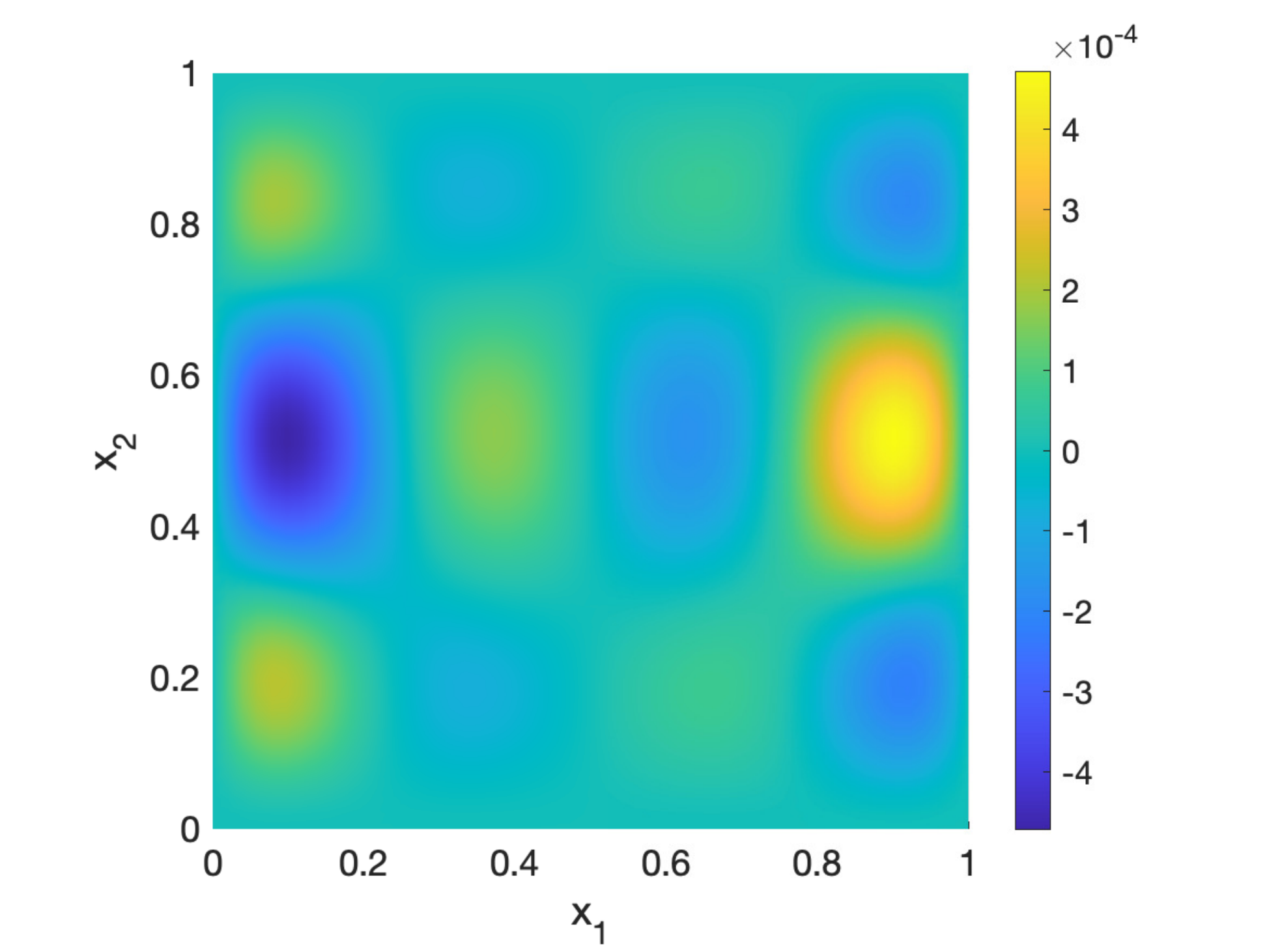} &
        \includegraphics[width=0.115\textwidth]{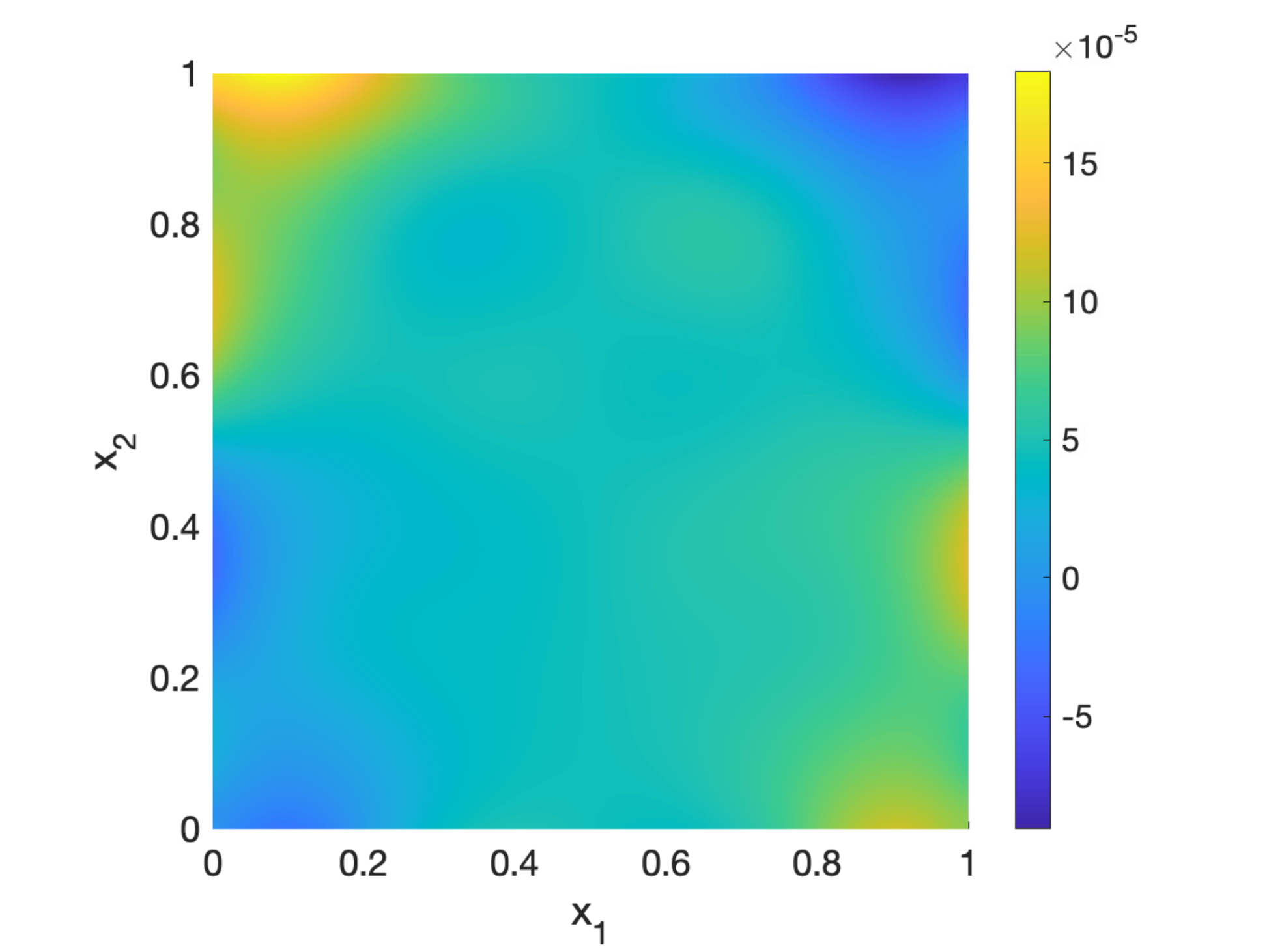} &
         \includegraphics[width=0.115\textwidth]{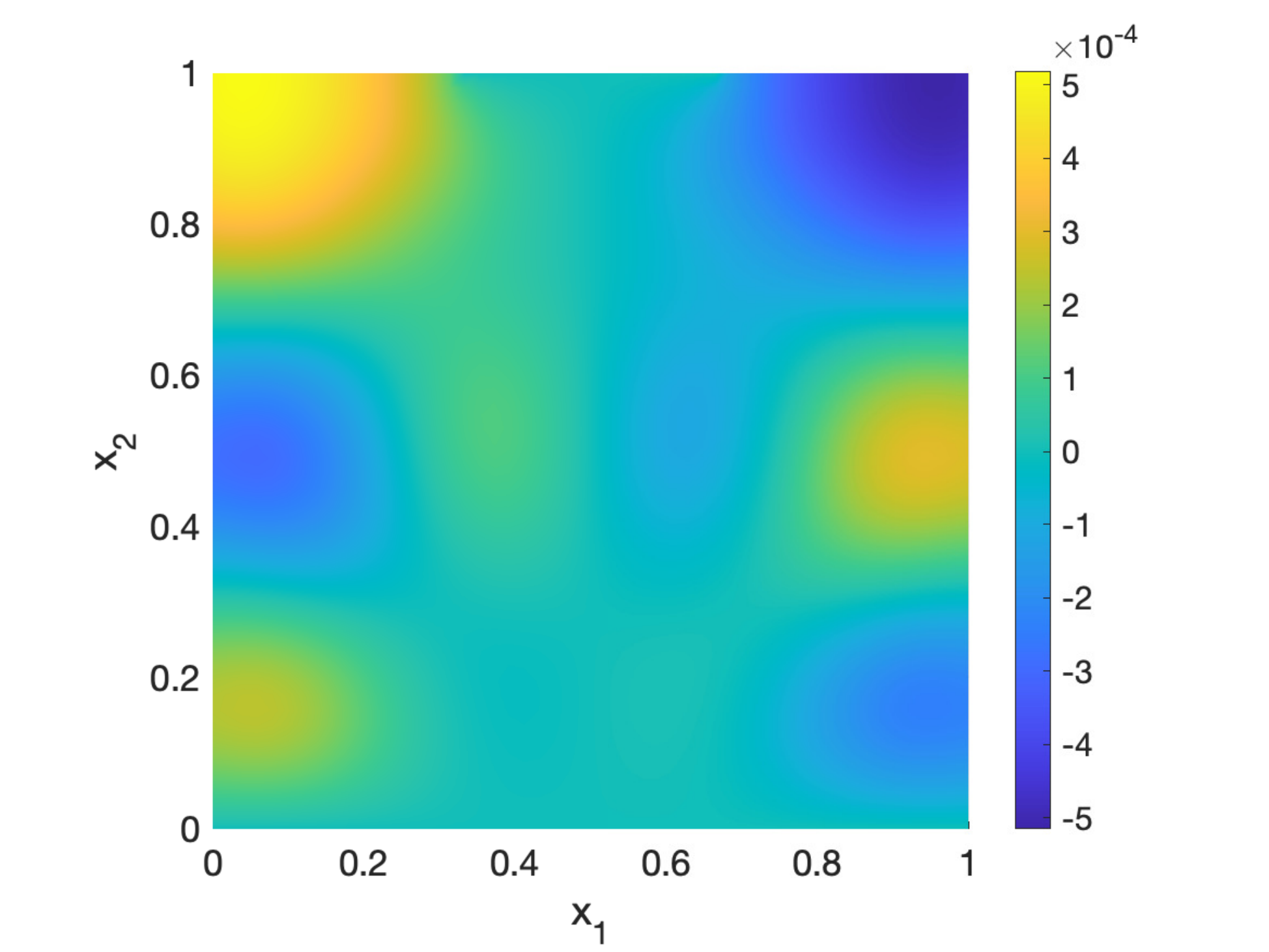} &
          \includegraphics[width=0.115\textwidth]{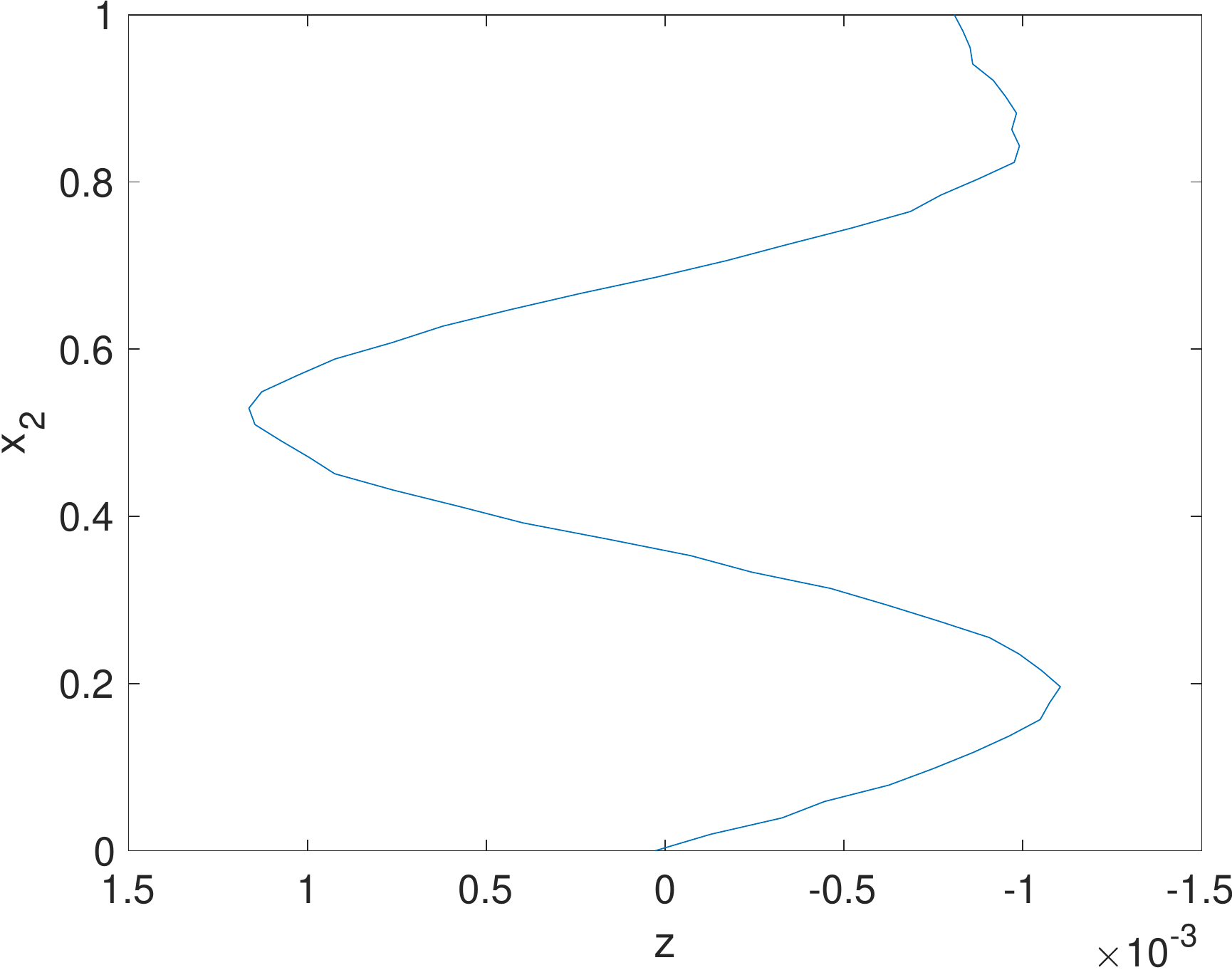} &
            \includegraphics[width=0.115\textwidth]{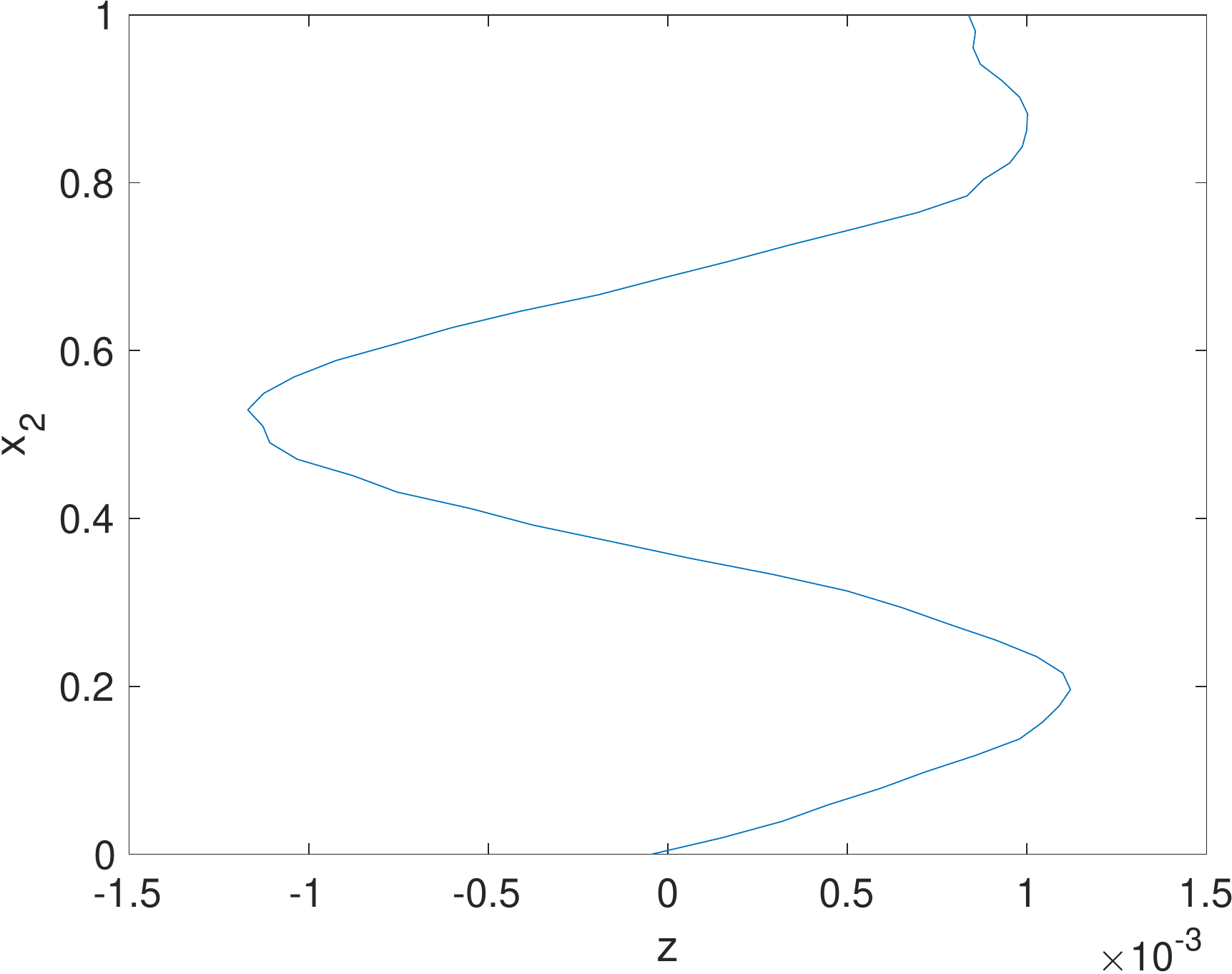}  \\
  \end{tabular}
    \caption{Leading model discrepancies, $\d(\overline{\z},\t_N)$, and the corresponding perturbations of the optimal solution, $\sigma_N \w_N$, for the thermal-fluid problem. From top to bottom, each row corresponds to a singular vector, $\t_N$, $N=1,2,\dots,6$. Across each row, the panels show the model discrepancies for the horizontal velocity, vertical velocity, pressure, and temperature, and the perturbations of the optimal controllers on the left and right boundaries.}
  \label{fig:tf_sing_vectors}
\end{figure}

\section{Conclusion} \label{sec:conclusion}
This article introduces a new approach to analyze the effect of model discrepancy for optimization problems constrained by PDEs. We leverage the PDE discretization and mathematical expressions for post-optimality sensitivities to define a general representation of model discrepancy. The resulting parameterization of the model discrepancy is high dimensional, scaling like the product of the discretized state dimension times the discretized controller dimension, which is reflective of the minimal assumptions we place on the form of the discrepancy. However, thanks to Kronecker product structure, the leading singular values/vectors of the post-optimality sensitivity operator are computed with a randomized algorithm whose computational complexity scales with the sum of the discretized state and control dimensions rather than the product. By combining the Kronecker product representation, a randomized generalized SVD algorithm, and adjoint-based derivative calculations, the result is an algorithm which is computationally scalable and hence practical for large-scale PDECO. 

As it commonly the case in uncertainty quantification, exploiting low rank structure in high dimensional parameter spaces is paramount to achieving computational efficiency. In our proposed framework, we do not impose physics specific structure on the model discrepancy representation and hence it is not necessarily low rank. However, physics constraints such as conservation properties, boundary conditions, or other physical properties are imposed through weighting matrices which define inner products. The coupling of these weighting matrices and the optimization objectives (data misfit, state target, regularization, etc.) constrain the analysis to a subspace informed by both prior knowledge and decision-making goals. Our proposed approach leverages existing investment in PDECO infrastructure (PDE discretization tools, derivative calculations, etc.) to enable rapid development while simultaneously enabling application specification in the form of weak constraints imposed through weighting matrices. Mathematical structure in the algorithm enables general purpose software development which is both computationally scalable and portable between applications. 

Our proposed approach has potential impact for a range of applications. In the context of inverse problems, the model discrepancy sensitivities provide a first order approximation for quantification of uncertainty due to model discrepancy. When proceeding through model development, the sensitivities guide the allocation of effort needed to achieve reliable optimal solutions. This has implications for both first principles physics developments (which physics simplifications are admissible) and reduced order model development (in a general sense including machine learning approaches trained on high-fidelity models). For applications with a real-time component, optimization is typically performed on simplified or reduced models to achieve fast computation. The model discrepancy sensitivities are positioned to complement this optimization. A companion article~\cite{model_discrepancy_2} demonstrates how high-fidelity data may be used to calibrate the model discrepancy in a Bayesian framework. By coupling high-fidelity data with post-optimality sensitivities we provide both an updating mechanism for real-time computation and uncertainty quantification in the optimal solution due to the model discrepancy.

\section*{Appendix}
Proof of Theorem~\eqref{thm:M_theta_inv}.
\begin{proof}
First observe the identities
\begin{enumerate}
\item $$\overline{\z}^T \M_z \x= \frac{\beta}{1+\beta},$$
\item $$\left( \vec{\Gamma}^{-1}-\G \right) \overline{\z} = \frac{1}{1+\beta} \vec{\Gamma}^{-1} \overline{\z} ,$$
\item $$ \E \x = \M_z \overline{\z}$$
\item $$\overline{\z} \hspace{.4 mm}  \overline{\z}^T \left( \vec{\Gamma}^{-1}-\G \right) = \frac{1}{1+\beta} \overline{\z} \hspace{.4 mm} \overline{\z}^T \vec{\Gamma}^{-1}.$$
\end{enumerate}
The block-wise matrix-matrix multiplication
\begin{align*}
& \left( \begin{array}{cc}
\L & \L \otimes \overline{\z}^T \M_z \\
\L \otimes \M_z \overline{\z} & \L \otimes \E
\end{array} \right) 
\left( \begin{array}{cc}
\L^{-1} & -\L^{-1} \otimes \x^T \\
- \L^{-1} \otimes \x & \L^{-1} \otimes \N
\end{array} \right) \\
\end{align*}
simplifies to:\\
(1,1) block:
\begin{align*}
& \L \L^{-1} + \left( \L \otimes \overline{\z}^T \M_z \right) \left( -\L^{-1} \otimes \x \right) \\
& = \I_m - \I_m \otimes \left( \x^T \M_z \overline{\z}  \right) \\
& = \I_m - \frac{\beta}{1+\beta} \I_m  \qquad \text{(identity 1)} \\
& = \frac{1}{1+\beta} \I_m
\end{align*}
(1,2) block:
\begin{align*}
& \L \left( -\L^{-1} \otimes  \x^T \right) + \left( \L \otimes \overline{\z}^T \M_z  \right) \left(  \L^{-1} \otimes \N \right)    \\
 &=  \I_m \otimes \left( \overline{\z}^T \M_z  \N -  \x^T \right) \\
 &= \I_m \otimes \left(  \left[ \overline{\z}^T \frac{1}{1+\beta} \vec{\Gamma}^{-1} - \overline{\z}^T \left( \vec{\Gamma}^{-1} - \G \right) \right] \M_z^{-1} \right) \\
 & = \I_m \otimes \left( \left[  \frac{1}{1+\beta} \vec{\Gamma}^{-1} \overline{\z}  - \frac{1}{1+\beta}  \vec{\Gamma}^{-1}\overline{\z} \right]^T \M_z^{-1} \right)  \qquad \text{(identity 2)}\\
 & =  \vec{0}
\end{align*}
(2,1) block:
\begin{align*}
& \left( \L \otimes \M_z \overline{\z} \right) \L^{-1} +\left( \L \otimes \E \right) \left(- \L^{-1} \otimes \x \right) \\
&= \I_m \otimes \M_z \overline{\z} - \I_m \otimes \M_z \overline{\z}  \qquad \text{(identity 3)} \\
& = \vec{0}
\end{align*}
(2,2) block:
\begin{align*}
& \left( \L \otimes \M_z \overline{\z}  \right) \left( -\L^{-1} \otimes \x^T \right) + \left( \L \otimes \E \right) \left( \L^{-1} \otimes \N \right) \\
& = \I_m \otimes \left( \E \N - \M_z \overline{\z} \hspace{.4 mm} \x^T \right) \\
& = \I_m \otimes \left( \M_z \left[ (\vec{\Gamma} + \overline{\z} \hspace{.4 mm} \overline{\z}^T )\frac{1}{1+\beta} \vec{\Gamma}^{-1}- \overline{\z} \hspace{.4 mm} \overline{\z}^T ( \vec{\Gamma}^{-1}-\G) \right] \M_z^{-1} \right) \\
& = \I_m \otimes \left( \M_z \left[  \frac{1}{1+\beta}  \left( \I_n + \overline{\z} \hspace{.4 mm}  \overline{\z}^T  \vec{\Gamma}^{-1} \right)- \frac{1}{1+\beta} \overline{\z} \hspace{.4 mm} \overline{\z}^T \vec{\Gamma}^{-1}\right] \M_z^{-1} \right) \qquad \text{(identity 4)} \\
& = \frac{1}{1+\beta} \I_m \otimes \I_n .
\end{align*}
Multiplication by a constant completes the proof.
\end{proof}

\begin{table}[!ht]
\centering
\begin{tabular}{|c|c|}
\hline
\textbf{Illustrative Example} & \hspace{1 mm} \\
PDE domain & $\Omega = (0,1)$ \\
True source & $z^\star(x) = e^{(-50 (x-0.5)^2)}$ \\
\hline
\textbf{Convection-diffusion reaction} & \hspace{1 mm} \\
PDE domain & $\Omega = (0,1)^2$ \\
Dirichlet boundary & $\Gamma_d = (0,1) \times \{ 0 \}$ \\
Neumann Boundary & $\Gamma_n=\partial \Omega \setminus \Gamma_d$ \\
Target state & $T(x_1,x_2)=(3x_1^2-3x_1^3)(2x_2-x_2^2)$ \\
Diffusion coefficient & $\nu=1$ \\
Velocity field & $\v(x_1,x_2)=(\cos(2 \pi x_1),1+\cos^2(2\pi x_2))$ \\
Reaction function & $R(u)=u^3+u$ \\
Regularization coefficients & $\beta_1=\beta_2=10^{-6}$ \\
Discretization & Linear finite elements \\
Discretization dimension & $\u,\z \in \R^{10000}$ \\
\hline
\textbf{Thermal-fluid} & \hspace{1 mm} \\
PDE domain & $\Omega = (0,1)^2$ \\
Inflow boundary & $\Gamma_i = [1/3,2/3] \times \{1\}$ \\
Outflow boundary & $\Gamma_o = [0,1/3] \times \{1\} \cup [2/3,1] \times \{1\}$ \\
Bottom boundary & $\Gamma_b = [0,1] \times \{0\}$ \\
Control boundary & $\Gamma_c = \{0,1\} \times [0,1]$ \\
Inflow boundary condition & $v_i(x_1) = -4(x_1-\frac{1}{3})(\frac{2}{3}-x_1)$ \\
Outflow boundary condition on $[0,\frac{1}{3}]$ & $v_o(x_1) =  2( \frac{1}{3}-x_1)x_1$ \\
Outflow boundary condition on $[\frac{2}{3},1]$ &  $v_o(x_1) = 2(x_1-\frac{2}{3})(1-x_1) $ \\
Reynolds number & $Re=10^2$ \\
Grashof number & $Ge=10^4$ \\
Prandtl number & $Pr=0.72$ \\
Coefficient & $\kappa = \frac{1}{RePr}$ \\
Coefficient & $\eta = \frac{Ge}{Re^2}$ \\
Control penalty & $\gamma = 0.5$ \\
Discretization & Q2-Q1 Taylor-Hood finite elements \\
Discretization dimension & $\u \in \R^{34531}$, $\z \in \R^{206}$ \\
\hline
\end{tabular}
\caption{Parameters defining the optimization problems presented in Section~\ref{sec:numerical_results}. In all examples $\vec{n}$ denotes the outward facing normal vector to the boundary.}
\label{tab:numerical_results_problem_params}
\end{table}

\section*{Acknowledgements}
This paper describes objective technical
results and analysis. Any subjective views or opinions that might be
expressed in the paper do not necessarily represent the views of the
U.S. Department of Energy or the United States Government. Sandia
National Laboratories is a multimission laboratory managed and
operated by National Technology and Engineering Solutions of Sandia
LLC, a wholly owned subsidiary of Honeywell International, Inc., for
the U.S. Department of Energy's National Nuclear Security
Administration under contract DE-NA-0003525. SAND2022-14143 O.

\bibliography{dasco}

\end{document}